\newtheorem*{theoremx}{Theorem}
\newtheorem{theorem}{Theorem}[section]
\newtheorem{proposition}[theorem]{Proposition}
\newtheorem{lemma}[theorem]{Lemma}
\theoremstyle{definition}
\newtheorem{definition}[theorem]{Definition}
\newtheorem{example}[theorem]{Example}
\theoremstyle{remark}
\newtheorem{remark}[theorem]{Remark}
\newcommand{\eps}{\varepsilon}
\newcommand{\ovl}{\overline}
\newcommand{\R}{\mathbb{R}}
\newcommand{\C}{\mathbb{C}}
\newcommand{\G}{\mathcal{G}}
\newcommand{\F}{\mathcal{F}}
\newcommand{\Z}{\mathbb{Z}}
\newcommand{\N}{\mathbb{N}}
\renewcommand{\P}{\mathbb{P}}
\newcommand{\U}{\mathscr{U}}
\newcommand{\Pc}{\mathcal{P}}
\newcommand{\ds}{\displaystyle}
\DeclareMathOperator{\sing}{Sing}
  \DeclareMathOperator{\gen}{gen}
 \DeclareMathOperator{\tang}{Tang}
\title{A new proof of the classification of Elliptic Foliations induced by real Quadratic Fields with center}
\author{
Liliana Puchuri\thanks{Instituto de Mat\'ematica and Ciencias Afines (IMCA) \& Pontificia Universidad Cat\'olica del Per\'u. Email: {\tt lpuchuri@pucp.edu.pe}} 
\and 
Orestes Bueno
\thanks{Universidad del Pac\'ifico \& Instituto de Matem\'atica and Ciencias Afines (IMCA). Email: {\tt o.buenotangoa@up.edu.pe}}}
\date{}
\begin{document}

\maketitle

\begin{abstract}
In this work, we give a new proof of the classification of the Lotka-Volterra and Reversible foliations, originally given by Gautier. This new proof, involves an unified technique for both cases, using the theory of foliations. 
In addition, we obtain a linear family of elliptical foliations with a non-invariant tangency set.
%

%




\bigskip

\noindent{\bf Keywords:} Elliptic Foliations, First Integrals, Pencil of Foliations, Reversible foliations, Lotka Volterra Foliations

\bigskip

\noindent{\bf MSC (2010):} 34C07, 14J27, 14D06, 32S65
\end{abstract}

\section{Introduction}
The \emph{infinitesimal's Hilbert Problem} asks for an upper bound to the number of limit cycles of a polinomial vector field of degree $n$, close to a polinomial vector field with first integral $f$.
Even the case $n=2$ is an open problem. In this case, there is some progress when $f$ has elliptic curves as generic level curves (called \emph{elliptic fibrations})~\cite{Gau1,Gavri1,Ilya2002,Petrov90}.

%

Any quadratic differential equation, for which the origin is a non-degenerated singularity of center type, can be taken to the following form
\begin{equation*}\label{eqcua}
\begin{aligned}
x'&=y+a_{2,0}x^2+a_{1,1}xy+a_{0,2}y^2,\\
y'&=-x+b_{2,0}x^2+b_{1,1}xy+b_{0,2}y^2.
\end{aligned}
\end{equation*}
We can also complexify the previous equation,
to obtain
\begin{equation}\label{eqcent2}
z'=-i z+Az^2+Bz\bar{z}+C\bar{z}^2,
\end{equation}
where $A,B,C\in\C$. 

The integrability theory of Darboux~\cite{Darboux1878} made it possible to obtain necessary and sufficient conditions for the classification theorem of centers of quadratic polynomial differential systems. This was achieved primarily by Kapteyn~\cite{Kap,Kap2} and Bautin~\cite{Bautin}. 
%
\begin{theoremx}[Kapteyn-Bautin]\label{teo:kapteyn}
There are five types of quadratic systems with center:
\begin{description}
\item[$H$:]$ z'=-i z+-z^2+2z\bar{z}+C\bar{z}^2$, $C\in \C \setminus \R$, (Hamiltonian);
\item[$H_1$:] $ z'=-i z+\bar{z}^2$, (Hamiltonian 1);
\item[$Q_3^L$:] $ z'=-i z+z^2+C\bar{z}^2$,  $C\in \C$, (generalized Lotka-Volterra);
\item[$Q_3^{R}$:] $z'=-i a z+4 z^2+2z\bar{z}+c\bar{z}^2,  \ a, c \in \R$, (Reversible);
\item[$Q_4$:]$z'=-i z+4z^2+2z\bar{z}+C\bar{z}^2$,  $|C|=2, C \in \C\setminus \R$, (Codimension 4). 
\end{description}
\end{theoremx}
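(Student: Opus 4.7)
The plan is to follow the classical Poincar\'e--Lyapunov--Bautin strategy, combining a necessary-condition computation on formal power series with the sufficient-condition theory of Darboux integrability. Since the linear part of~\eqref{eqcent2} has purely imaginary eigenvalues, the origin is either a focus or a center, and Poincar\'e's lemma reduces the center condition to the existence of a formal first integral $H(z,\bar z)=z\bar z+\sum_{k\geq 3}H_k(z,\bar z)$ solving $\dot H=0$ along the flow.

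First, I would recursively determine the homogeneous components $H_k$ by matching coefficients in $\dot H=0$. At each odd order $2k+1$ one encounters a single algebraic obstruction, the Lyapunov focal value $V_{2k+1}$, which is a polynomial in $A,B,C$ and their complex conjugates. The key algebraic input, due to Bautin, is that the ideal generated by all focal values in $\C[A,\bar A,B,\bar B,C,\bar C]$ is in fact generated by the first four, $V_3,V_5,V_7,V_9$; hence the center locus is cut out by these four equations.

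Second, I would analyze the algebraic variety $V_3=V_5=V_7=V_9=0$ in the parameter space $(A,B,C)\in\C^3$. Factoring these polynomials and decomposing the variety into irreducible components should yield exactly five components, which after normalization by the rotation and scaling symmetries of the equation become the five families $H$, $H_1$, $Q_3^L$, $Q_3^R$, and $Q_4$ of the statement.

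Third, to close the converse implication (vanishing of focal values $\Rightarrow$ center), I would exhibit an explicit integrability mechanism for each family: a polynomial Hamiltonian in cases $H$ and $H_1$; Darboux integrability from three invariant lines in case $Q_3^L$; a real analytic involution reversing time in case $Q_3^R$; and a Darboux first integral from two invariant conics in case $Q_4$. The principal obstacle is the algebraic decomposition step: the polynomials $V_7$ and $V_9$ are bulky, and one must carefully track the intersections of the four hypersurfaces to recover exactly five irreducible components and no spurious ones. Once the normal forms are in hand, verifying the integrability of each family is essentially a guided computation from the shape of the invariant curves or symmetry.
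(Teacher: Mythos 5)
The paper does not actually prove this statement: it is quoted as the classical Kapteyn--Bautin theorem and attributed to \cite{Kap,Kap2,Bautin}, with the sufficiency of the normal forms resting on Darboux's integrability theory \cite{Darboux1878}. Your sketch follows the standard classical route (Poincar\'e--Lyapunov formal first integral, focal values as obstructions, Bautin's finite generation, decomposition of the center variety, and an explicit integrability mechanism for each component), so in outline it is the right proof to attempt and there is no competing argument in the paper to compare it against.

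That said, three points in your sketch need correction before it becomes a proof. First, Bautin's theorem, in his normalization, gives generation of the ideal of focal values by the first \emph{three} nontrivial ones, $V_3,V_5,V_7$; adding $V_9$ is harmless but is not where the difficulty lies --- the real content is proving that no later focal value imposes a new condition, which is exactly Bautin's contribution and cannot be waved at as ``the key algebraic input'' without proof. Second, the center variety of quadratic systems has \emph{four} irreducible components (Hamiltonian, reversible, generalized Lotka--Volterra, codimension four); the five types in the statement are not five irreducible components, but arise from a further normalization splitting the Hamiltonian component into the cases $H$ and $H_1$. So ``recover exactly five irreducible components'' is not the correct target of your decomposition step. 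Third, and most concretely, the family $Q_4$ is not integrated by two invariant conics: as the paper records immediately after the theorem, its first integral has the form $P_3(x,y)^2/P_2(x,y)^3$, a Darboux function built from an invariant conic $P_2$ and an invariant \emph{cubic} $P_3$. A configuration of two invariant conics does not yield a first integral for this family, so that step of your sufficiency argument would fail as written.
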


From Kapteyn-Bautin's theorem, we obtain the classification of quadratic vector fields with a center, namely, if the complex ODE~\eqref{eqcent2} possesses a center then it must have a first integral of one of the following forms
\begin{gather}
 P_3 \in \R[x,y],\quad\text{(Hamiltonian cases: $H$ and $H_1$);}\nonumber\\
 x^py^q(ax+by+c)^r,\quad p,q \in \Z,\, a,b,c\in \R,\quad\text{(Lotka-Volterra case: $Q_3^L$);}\label{eq:lotka}\\
x^p(y^2+P_2(x))^q,\quad q \in \N,\,  p\in \Z,\,
P_2 \in \R_2[x,y],\quad\text{(Reversible case: $Q_3^R$);}\label{eq:rever}\\
\dfrac{P_3(x,y)^2}{P_2(x,y)^3},\quad P_2,P_3 \in \R[x,y],\quad \text{(Codimensi\'on 4 case: $Q_4$).}\nonumber 
\end{gather} 
In~\cite{Gau1}, Gautier provides the classification of reversible and Lotka-Volterra foliations. For this, Gautier uses two vastly different approaches.  For reversible foliations he uses the genus formula for hyperelliptic curves, whereas for Lotka-Volterra foliations, he calculates the number of zeros and poles of a certain 1-form to obtain the genus of the generic fiber. 

In this work, we give a different proof of the classification of the Lotka-Volterra and Reversible folations. 
For the Reversible case, we recover the following theorem (see Section~\ref{sec:reversible}, Theorem~\ref{teo:reversible}).
\begin{theoremx} 
Let $f$ be defined as
\[
f(x,y,z)=\dfrac{x^{p}(y^2+ax^2+bxz+cz^2)^q}{z^{p+2q}}, 
\]
and let $\F$ be the foliation induced by $df$. Then $\F$ is elliptic if, and only if, after an automorphism of $\P^2$, it has a first integral of the form:
\begin{enumerate}
 \item If $p+2q>0$, $a\neq 0$ and $c\neq 0$:
 \begin{gather*}
f(x,y,z)=\dfrac{(y^2+ax^2+bxz+cz^2)^2}{xz^{3}},\quad f(x,y,z)=\dfrac{(y^2+ax^2+bxz+cz^2)^2}{x^3z}\\
\end{gather*}
\item If $p+2q>0$, $ab\neq 0$ and $c=0$:
\begin{gather*}
f(x,y,z)=\dfrac{(y^2+ax^2+bxz)^2}{xz^{3}},\quad f(x,y,z)=\dfrac{(y^2+ax^2+bxz)^3}{x^2z^{4}},\\
f(x,y,z)=\dfrac{(y^2+ax^2+bxz)^3}{x^{4}z^{2}},\quad f(x,y,z)=\dfrac{(y^2+ax^2+bxz)^3}{x^{5}z},\\
f(x,y,z)=\dfrac{(y^2+ax^2+bxz)^4}{x^{5}z^{3}},\quad f(x,y,z)=\dfrac{(y^2+ax^2+bxz)^4}{x^{7}z},
\end{gather*}
\item If $p+2q>0$, $a=c=0$:
 \begin{gather*}
f(x,y,z)=\dfrac{(y^2+bxz)^{1+6u}}{x^{-2+6u}z^{4+6u}},\quad f(x,y,z)=\dfrac{(y^2+bxz)^{-1+6u}}{x^{-4+6u}z^{2+6u}},\\
f(x,y,z)=\dfrac{(y^2+bxz)^{2+6u}}{x^{-1+6u}z^{5+6u}},\quad f(x,y,z)=\dfrac{(y^2+bxz)^{-2+6u}}{x^{-5+6u}z^{1+6u}},\\
f(x,y,z)=\dfrac{(y^2+bxz)^{1+6u}}{x^{4+6u}z^{-2+6u}},\quad f(x,y,z)=\dfrac{(y^2+bxz)^{-1+6u}}{x^{3+6u}z^{-5+6u}},\\
f(x,y,z)=\dfrac{(y^2+bxz)^{2+6u}}{x^{5+6u}z^{-1+6u}},\quad f(x,y,z)=\dfrac{(y^2+bxz)^{-2+6u}}{x^{1+6u}z^{-5+6u}},\\
f(x,y,z)=\dfrac{(y^2+bxz)^{1+2u}}{x^{-1+2u}z^{3+2u}},\quad f(x,y,z)=\dfrac{(y^2+bxz)^{1+2u}}{x^{3+2u}z^{-1+2u}},
\end{gather*}
for any $u\in\N$,
\item If $p+2q<0$ and $c\neq 0$:
 \begin{gather*}
f(x,y,z)=\dfrac{(y^2+ax^2+bxz+cz^2)z^2}{x^{4}},\quad f(x,y,z)=\dfrac{(y^2+ax^2+bxz+cz^2)z}{x^{3}},\\
\end{gather*}
\item If $p+2q<0$ and $c=0$:
 \begin{gather*}
f(x,y,z)=\dfrac{(y^2+ax^2+bxz)z^2}{x^{4}},\quad f(x,y,z)=\dfrac{(y^2+ax^2+bxz+cz^2)z}{x^{3}},\\
f(x,y,z)=\dfrac{(y^2+ax^2+bxz)^2z}{x^{5}}.\\
\end{gather*}
\end{enumerate}
\end{theoremx}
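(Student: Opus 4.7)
The foliation $\F$ is elliptic exactly when the generic fiber $C_\lambda = \{x^p Q^q - \lambda z^{p+2q} = 0\}$ of the rational map $f\colon \P^2 \dashrightarrow \P^1$ has geometric genus $1$, where $Q = y^2 + ax^2 + bxz + cz^2$. I would begin by exploiting the $y \mapsto -y$ symmetry: since $f$ is invariant under the involution $\sigma(x,y,z) = (x,-y,z)$, each fiber $C_\lambda$ is a double cover of the quotient curve $R_\lambda$ obtained by introducing $u = y^2 + Q_0(x,z)$, with $Q_0 = ax^2+bxz+cz^2$. Concretely $R_\lambda \subset \P(1,2,1)$ is cut out by $x^p u^q = \lambda z^{p+2q}$, a (singular) rational curve because it admits a monomial parametrization; the $2\!:\!1$ map $C_\lambda \to R_\lambda$ is branched exactly on the locus $\{u = Q_0\} \cap R_\lambda$.

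The next step is a Riemann--Hurwitz computation on the normalization of $C_\lambda \to R_\lambda$. With $g(R_\lambda) = 0$, ellipticity forces the effective branch divisor to have degree $4$. The intersection $R_\lambda \cap \{u = Q_0\}$ contributes $p+2q$ points in the affine piece, but the normalization at the singular points of $R_\lambda$ over the invariant lines $\{x=0\}$, $\{z=0\}$ and the conic $\{Q=0\}$ contributes corrections governed by $\gcd(p, q, p+2q)$ and by whether $\{Q=0\}$ passes through $[1:0:0]$ or $[0:0:1]$. Collecting these contributions turns the condition $g(C_\lambda)=1$ into a finite Diophantine system in $(p,q)$ and in the vanishing pattern of $(a,c)$.

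The case division in the statement mirrors this analysis: the sign of $p + 2q$ controls which coordinate line sits in the denominator of $f$, while the vanishing of $a$ or $c$ determines whether $Q$ meets $\{z=0\}$ or $\{x=0\}$, thereby altering the ramification tally. In each subcase I would enumerate the admissible $(p,q)$ and then use the automorphisms of $\P^2$ preserving the invariant set $\{x = 0\}\cup\{z=0\}\cup\{Q=0\}$ (scalings of $x, z, y$, the swap $x \leftrightarrow z$, and completion of the square absorbing $b$) to put the integral into one of the canonical forms displayed. The infinite families indexed by $u \in \N$ in case (3) arise because, when $a = c = 0$, the conic $Q = y^2 + bxz$ is tangent to both invariant axes, so the ramification balance fixes $(p,q)$ only modulo a shift of $(6,2)$, producing the periodic lists.

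The main obstacle will be the bookkeeping of the local Riemann--Hurwitz contributions at the non-smooth points of $R_\lambda$: the branch index at a point over $\{x = 0\}$ or $\{z = 0\}$ depends on $\gcd(p, p+2q)$ and $\gcd(q, p+2q)$, and these gcds interact with the degenerations $a=0$, $c=0$, or $ac=0$ that move branch points onto singularities of $R_\lambda$. Once the local analysis is pinned down and one checks that the resulting Diophantine system has exactly the solutions listed, the reduction to canonical form via $\P^2$-automorphisms, and the sufficiency direction (a direct Riemann--Hurwitz verification on each listed $f$), are routine.
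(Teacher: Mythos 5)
Your plan is viable, but it is a genuinely different route from the paper's --- in fact it is essentially the strategy of Gautier's original proof, which this paper is explicitly written to replace. You exploit the hyperelliptic involution $y\mapsto -y$, pass to the rational quotient $R_\lambda$, and run Riemann--Hurwitz on the normalized double cover; the paper instead treats the generic fiber $C$ directly as a separatrix of the degree-two foliation $\F$ and computes its Euler characteristic from the Cerveau--Lins Neto index formula (Theorem~\ref{cerveaulins}), evaluating the local indices $i_P(\F,B)$ at the singularities $P_1,\dots,P_7$ via Poincar\'e/Dulac normal forms and, at the nilpotent point arising when $c=0$, via two blow-ups together with Proposition~\ref{multiexplo}. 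Both routes terminate in the same kind of gcd Diophantine equations in $(p,q)$. What your approach buys is classical curve theory with no foliation-theoretic input; what the paper's buys is uniformity --- the identical index computation handles the Lotka--Volterra case, where no hyperelliptic involution is available --- and it sidesteps the delicate local analysis of the branch divisor at the singular points of $R_\lambda$. On that last point, one correction: the contributions you attribute to $\gcd(p,q,p+2q)$ cannot be right, since that gcd equals $1$ under the standing normalization $\gcd(p,q)=1$; the quantities that actually govern the count (and that produce the infinite families of case (3)) are $\gcd(2q,-p)$ and $\gcd(q,-2p)$, i.e.\ the numbers of local branches of $C$ at $[0:0:1]$ and $[1:0:0]$, and your ramification tally there must keep track of whether the involution fixes or permutes those branches.
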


On the other side, for the Lotka-Volterra case, we have the following theorem (see Section~\ref{sec:lotkavolterra}, Theorem~\ref{teo:lotkavolterra}). 
\begin{theoremx}
Let $f$ be defined as 
\[
f(x,y,z)=\dfrac{x^{p}y^{q}(ax+by+cz)^r}{z^{p+q+r}}, 
\]
and let $\F$ be the foliation induced by $df$. Then $\F$ is elliptic if, and only if, after an automorphism of $\P^2$, it has a first integral of the form:
\begin{enumerate}\renewcommand{\theenumi}{\Roman{enumi}}
 \item $ab\neq 0$, $c\in\R$ and $p>0$, $q>0$:
\begin{gather*}
f(x,y,z)=\dfrac{xy(ax+by+cz)}{z^3}, \quad f(x,y,z)=\dfrac{xy(ax+by+cz)^2}{z^4}, \\
f(x,y,z)=\dfrac{xy^2(ax+by+cz)^3}{z^6},
\end{gather*}
\item $abc\neq 0$ and $p<0$, $q>0$, $p+q+r>0$:
\begin{gather*}
^{\dagger}f(x,y,z)=\dfrac{y(ax+by+cz)^3}{x^2z^2},\quad {f(x,y,z)=\dfrac{y^2(ax+by+cz)^2}{xz^{3}}},\\
^{\dagger}f(x,y,z)=\dfrac{y(ax+by+cz)^4}{x^2z^{3}},\quad ^{\dagger}f(x,y,z)=\dfrac{y^2(ax+by+cz)^3}{xz^{4}},\\
^{\dagger}f(x,y,z)=\dfrac{y(ax+by+cz)^6}{x^3z^{4}},\quad ^{\dagger}f(x,y,z)=\dfrac{y^3(ax+by+cz)^4}{xz^{6}},
\end{gather*}
\item $a=0$, $bc\neq 0$ and $p>0$, $q>0$:
\begin{gather*}
f(x,y,z)=\dfrac{x^{3}y^{1+3u}(by+cz)^{1+3v}}{z^{5+3(u+v)}},\quad f(x,y,z)=\dfrac{x^{3}y^{2+3u}(by+cz)^{2+3v}}{z^{7+3(u+v)}},\\
f(x,y,z)=\dfrac{x^{4}y^{1+4u}(by+cz)^{1+4v}}{z^{6+4(u+v)}},\quad f(x,y,z)=\dfrac{x^{4}y^{3+4u}(by+cz)^{3+4v}}{z^{10+4(u+v)}},\\
f(x,y,z)=\dfrac{x^{4}y^{1+4u}(by+cz)^{2+4v}}{z^{7+4(u+v)}},\quad f(x,y,z)=\dfrac{x^{4}y^{2+4u}(by+cz)^{3+4v}}{z^{9+4(u+v)}},\\
f(x,y,z)=\dfrac{x^{6}y^{1+6u}(by+cz)^{2+6v}}{z^{9+6(u+v)}},\quad f(x,y,z)=\dfrac{x^{6}y^{4+6u}(by+cz)^{5+6v}}{z^{15+6(u+v)}},\\
f(x,y,z)=\dfrac{x^{6}y^{1+6u}(by+cz)^{3+6v}}{z^{10+6(u+v)}},\quad f(x,y,z)=\dfrac{x^{6}y^{3+6u}(by+cz)^{5+6v}}{z^{14+6(u+v)}},\\
f(x,y,z)=\dfrac{x^{6}y^{2+6u}(by+cz)^{3+6v}}{z^{11+6(u+v)}},\quad f(x,y,z)=\dfrac{x^{6}y^{3+6u}(by+cz)^{4+6v}}{z^{13+6(u+v)}},
\end{gather*}
for every $u,v\geq 0$, integers.
\end{enumerate}
In addition, for every first integral of the form
\[
f(x,y,z)=\dfrac{x^py^q(by+cz)^r}{z^{p+q+r}}
\]
in case III, with $p<q+r$, we must also consider a first integral of the form
\[
f(x,y,z)=\dfrac{x^{p-q-r}y^q(ax+by)^r}{z^{p}}=\dfrac{y^q(ax+by)^r}{x^{-p'}z^{p'+q+r}},\qquad p'=p-q-r.
\]
\end{theoremx}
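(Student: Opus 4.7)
The plan is to apply the foliation-theoretic machinery developed earlier in the paper for the Reversible case to the Lotka-Volterra pencil, producing a unified treatment of both families. The key observation is that the logarithmic 1-form $\omega = df/f$ has simple poles precisely along the four lines $\{x=0\}$, $\{y=0\}$, $\{ax+by+cz=0\}$ and $\{z=0\}$, with residues $p$, $q$, $r$ and $-(p+q+r)$ respectively. The foliation $\F$ defined by $df$ coincides with the one defined by $\omega$, so its invariant set is a configuration of four lines, generically in general position, which degenerates when $abc=0$ or $p+q+r=0$. I would then split the argument according to how this configuration degenerates: Case I (all four lines in general position), Case II ($p+q+r=0$, so the ``line at infinity'' is missing from the invariant support), and Case III ($a=0$, in which the line $\{ax+by+cz=0\}$ passes through the base point $[1:0:0]$ of the other two monomial factors).

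In each case the proof proceeds by computing the genus $g$ of the generic leaf of $\F$ through a resolution of the indeterminacies of the rational map $f\colon \P^2 \dashrightarrow \P^1$. The base points of the pencil are exactly the pairwise intersections of the four invariant lines, and after a finite sequence of blow-ups at these points the pencil becomes a morphism whose generic fiber is a smooth curve. Applying a Zeuthen--Segre or Euler-characteristic computation that takes into account the multiplicities of the invariant lines (the residues $p,q,r,-(p+q+r)$) yields an explicit formula for $g$ in terms of the triple $(p,q,r)$ and the combinatorial type of the configuration. This is the same recipe used in Section \ref{sec:reversible} for the Reversible case, but applied to a reducible divisor of four lines rather than a line plus a conic.

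Imposing $g=1$ then converts the problem into a Diophantine one. In Case I, the condition reduces, up to the $\P^2$-automorphisms permuting the roles of the four lines, to finitely many triples $(p,q,r)$, which are precisely the three families listed. In Case II, the vanishing of one residue alters the singular contribution of the corresponding base points and a direct enumeration produces the six displayed families. Case III is the most delicate: after blowing up the triple point $[1:0:0]$, the residues on the exceptional divisor are combinations of $q$ and $r$, the resonance condition between the eigenvalues of the remaining saddle singularity has a one-parameter family of integer solutions, and this is precisely what produces the infinite families parametrized by $u,v\in\N$ in the twelve listed forms.

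The main obstacle is the enumeration in Case III, where one must carefully match the integer solutions of the genus-one Diophantine system with the explicit exponent patterns; this is a classification-of-elliptic-pencils argument in the spirit of Lins Neto, and keeping track of the multiplicities along the exceptional divisors so that no family is missed or double-counted is the delicate bookkeeping of the proof. The addendum concerning the extra first integral follows from the projective involution $(x,y,z)\mapsto(z,y,x)$, which exchanges the roles of $\{x=0\}$ and $\{z=0\}$: I would verify that applying it to each family in Case III with $p<q+r$ produces exactly the stated form $\frac{y^q(ax+by)^r}{x^{-p'}z^{p'+q+r}}$ with $p'=p-q-r$, and that no genuinely new family is introduced in Cases I or II, thereby closing the classification.
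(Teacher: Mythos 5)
Your overall strategy---compute the genus of the generic fiber, impose $g=1$, and solve the resulting Diophantine conditions on $(p,q,r)$ up to automorphisms permuting the invariant lines---is the same skeleton as the paper's argument, which applies Theorem~\ref{cerveaulins} to the generic fiber viewed as a separatrix and computes the local indices $i_p(\F,B)$ from eigenvalues and normal forms. However, your case decomposition does not match the statement and leaves the hardest part of the theorem unproved. Your ``Case II'' is the degeneration $p+q+r=0$, i.e.\ the vanishing of the residue along $\{z=0\}$; but that case is vacuous (the paper discards $p=0$, $q=0$ and $p+q+r=0$ at the outset because the generic fiber then has genus zero). The actual Case~II of the statement is $abc\neq 0$ with $p<0$, $q>0$, $p+q+r>0$: all four residues are nonzero and the four lines are in general position, so your criterion ``all four lines in general position'' lumps it together with Case~I. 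What distinguishes the two is the \emph{sign pattern} of the exponents, which changes the equation of the generic fiber (from $x^py^q(ax+by+cz)^r=z^{p+q+r}$ to $y^q(ax+by+cz)^r=x^{-p}z^{p+q+r}$), hence which singularities of $\F$ lie on it, hence the genus formula (three gcd terms versus four). Since the six $\dagger$-marked families of Case~II are precisely the new content of this theorem relative to Gautier, a proof that never isolates the sign case $p<0$, $q>0$, $p+q+r>0$ cannot produce them.

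The second concrete error is the mechanism for the addendum. The projective involution $[x:y:z]\mapsto[z:y:x]$ cannot yield the extra first integrals: the generic fiber of $x^py^q(by+cz)^r/z^{p+q+r}$ has degree $p+q+r$, while that of $y^q(ax+by)^r/(x^{-p'}z^{p'+q+r})$ has degree $q+r$, and a linear automorphism preserves degrees. The paper instead uses the quadratic birational map $[x:y:z]\mapsto[xz:yx:z^2]$ of~\eqref{eq:mapbir}, which is a genus-preserving Cremona transformation carrying the degenerate subcase $c=0$, $p<0$, $q>0$, $p+q+r>0$ onto Case~III; the addendum is the inverse of that reduction. You would need this (or an equivalent non-linear transformation), together with a justification that birational equivalence of the fibers preserves ellipticity of the foliation, to close the classification. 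The rest of your outline (the Diophantine enumeration in Cases I and III, including the infinite $u,v$-families coming from the gcd conditions modulo $p_0$) is consistent with the paper's proof, though stated at a level of generality that hides the index computations at the nilpotent singularity $[1:0:0]$ in Case~III, which require an explicit blow-up as in Proposition~\ref{multiexplo}.
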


The main tool of our new proofs is Theorem~\ref{cerveaulins} due to Cerveau and Lins-Neto~\cite{CeLi}. 
We use such theorem to calculate the genus of the generic fiber of a first integral of the foliation. Thus we device an unified technique which involves the theory of foliations.  

We address the classification of reversible foliations in Section~\ref{sec:reversible} and the classification of Lotka-Volterra foliations in Section~\ref{sec:lotkavolterra}.  We must remark that, in the latter case, we obtain additional foliations apart from the originally obtained by Gautier, namely, every foliation induced by the first integrals marked with $\dagger$ in the above theorem.

In Section~\ref{sec:Linear}, we deal with pencils of foliations (see~\cite{LN3}). Theorem~\ref{teo:lins7}~\cite{LN3} give us a classification of four pencils of elliptic foliations whose tangency set is invariant. In addition, Proposition~\ref{lem4cap2-2} provide a characterizations of such foliations. The linear families obtained from Gautier's classification allow us to find many examples of pencils formed by foliations induced by elliptic fibrations and whose tangency set is non-invariant.
\section{Preliminaries}\label{sec:prelim}

 
\begin{definition}
An \emph{automorphism} of $\P^2$ is a map $F:\P^2\to\P^2$,
\[
F[x:y:z]=[a_{11}x+a_{12}y+a_{13}z:a_{21}x+a_{22}y+a_{23}z:a_{31}x+a_{32}y+a_{33}z],  
\]
where the matrix $A=[a_{ij}]$ is non-singular.
\end{definition}

\begin{definition}
Let $\F$ be a foliation on $\P^2$. We say that $\F$ is \emph{reversible} (respectively, \emph{Lotka-Volterra}), if, after an automorphism on $\P^2$, it possesses a first integral of the form~\eqref{eq:rever}, but not~\eqref{eq:lotka} (respectively, of the form~\eqref{eq:lotka}, but not~\eqref{eq:rever}).
\end{definition}

We will recall some definitions about fibrations in complex compact surfaces. Let $X$ be a compact surface and let $S$ be a compact Riemann surface. A \emph{fibration} is an holomorphic map $f:X\to S$.  A fibration is called \emph{rational} (respectively, \emph{elliptic}), if all but finitely many fibers have genus zero (respectively, genus one).

Let $\F$ be a foliation in $\P^2$ and let $\pi:\widetilde{\P^2}\to \P^2$ be the desingularization of $\F$. We say that $\F$ is \emph{elliptic} if it possesses a first integral $F:\P^2  \dashrightarrow \P^1$ such that $F\circ\pi:\widetilde{\P^2}\to \P^1$ is an elliptic fibration.

Our technique involves a way of calculate the genus of an irredutible curve, invariant by a foliation, using a certain  \emph{multiplicity} of the asociated field.

\begin{definition}\label{invcurva}
Let $U\subset\C^2$, $V\subset\C$ be open sets, with $0\in V$. Let $X$ be a vector field on $U$ and $f:U\to V$ holomorphic. We say that $C=f^{-1}(0)$ is \emph{invariant by $X$} if
\[
df_q(X(q))=0,\quad\forall q\in C.
\]
\end{definition}

%
%


\begin{proposition}[{\cite[Proposition 3]{CSL}}]\label{propmulti}
Let $X$ be a field on the open set $U\subset\C^2$, $S$ a one dimensional invariant submanifold, and $p\in S$ an isolated singularity of $X$. Let $\alpha:V\to U$ the Puiseux parametrization on a domain $V\subset C$, which contains $p$.
Then, there exists a unique holomorphic vector field $X_1$ on $D$ such that
 %
\[
d\alpha \cdot X_1=X\circ \alpha.
\]
\end{proposition}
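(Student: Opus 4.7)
The plan is to construct $X_1$ first on the subset of $V$ where the Puiseux parametrization is an immersion, and then extend it holomorphically to all of $V$. As a preliminary step, I would recall that, being a Puiseux parametrization of an analytic branch, $\alpha:V\to U$ is holomorphic and its differential $d\alpha$ is nonzero on all of $V$ except at a discrete set of points $T\subset V$, namely the preimages of singular points of the branch $S$. On $V\setminus T$, $\alpha$ is an immersion onto the smooth locus of $S$.

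Next I would define $X_1$ on $V\setminus T$ by using the invariance of $S$. Since $S$ is $X$-invariant in the sense of Definition~\ref{invcurva}, for every $t\in V\setminus T$ the vector $X(\alpha(t))$ lies in $T_{\alpha(t)}S=\C\cdot\alpha'(t)$, so there is a unique scalar $\lambda(t)\in\C$ with $X(\alpha(t))=\lambda(t)\alpha'(t)$. The function $\lambda$ is holomorphic on $V\setminus T$, since locally $\lambda(t)=X_i(\alpha(t))/\alpha_i'(t)$ for any component index $i$ with $\alpha_i'(t)\neq 0$. Setting $X_1=\lambda(t)\partial_t$ yields a holomorphic solution on $V\setminus T$ of the required equation $d\alpha\cdot X_1=X\circ\alpha$.

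The main step is to show that $\lambda$ extends holomorphically across each $t_0\in T$; by Riemann's removable singularity theorem it suffices to verify that $\lambda$ is bounded in a punctured neighbourhood of $t_0$. Let $q=\alpha(t_0)$. If $q\neq p$, then $q$ is a regular point of $X$ and $X(q)$ belongs to the tangent cone of $S$ at $q$; a componentwise comparison of the orders of vanishing of $X\circ\alpha$ and of $\alpha'$ at $t_0$ then shows the quotient $\lambda$ is holomorphic there. If $q=p$, both $X\circ\alpha$ and $\alpha'$ vanish at $t_0$, and this is where the hypothesis that $p$ is an \emph{isolated} singularity of $X$ is essential: it prevents $X\circ\alpha$ from vanishing identically on any neighbourhood of $t_0$ in $V$, so $X\circ\alpha$ has a well-defined finite vanishing order at $t_0$, and the same componentwise comparison (taking the component in which $\alpha'$ vanishes to minimal order) shows that $\lambda$ has at most a removable singularity at $t_0$.

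Uniqueness is then immediate: if $X_1$ and $\widetilde X_1$ are two holomorphic vector fields on $V$ satisfying $d\alpha\cdot X_1=X\circ\alpha=d\alpha\cdot\widetilde X_1$, they agree on the open dense set $V\setminus T$ where $d\alpha$ is injective, hence on all of $V$ by continuity. The decisive obstacle is thus the order-of-vanishing comparison at points of $T$ mapping to $p$; the isolated-singularity hypothesis is precisely what guarantees that this comparison yields a finite, nonnegative exponent, turning the a priori meromorphic $\lambda$ into a genuine holomorphic function on all of $V$.
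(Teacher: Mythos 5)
The paper itself offers no proof of this proposition---it is quoted verbatim from \cite{CSL}---so I am comparing your argument with the standard one. Your overall architecture is correct and is the standard one: use invariance to define $X_1=\lambda(t)\,\partial_t$ on the locus where $\alpha$ is an immersion, extend $\lambda$ across the critical set by Riemann's removable singularity theorem, and deduce uniqueness from density. For a Puiseux parametrization centred at $p$, say $\alpha(t)=p+(t^n,\varphi(t))$ with $\mathrm{ord}_0\varphi\ge n$, the critical set $T$ is just $\{0\}$, so the whole proof reduces to removability at $t_0=0$.

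That is exactly the step you do not actually carry out. Taking the component $i$ with $\mathrm{ord}_0(\alpha_i')$ minimal, one has $\mathrm{ord}_0(\alpha_i')=n-1$, and what must be shown is $\mathrm{ord}_0(X_i\circ\alpha)\ge n-1$. The isolatedness of the singularity only guarantees $X\circ\alpha\not\equiv 0$, i.e.\ that this order is \emph{finite}; it gives no lower bound, so by itself it only makes $\lambda$ meromorphic at $0$. (Your treatment of the case $q\neq p$ betrays the problem: there the ``same componentwise comparison'' would give $\mathrm{ord}_0(X_i\circ\alpha)=0<\mathrm{ord}_0(\alpha_i')$, hence a pole, not removability---that case is in fact vacuous for a Puiseux parametrization on a small domain, but it shows the comparison is not doing the work you assign to it.) The missing ingredient is the hypothesis $X(p)=0$ combined with the fact that \emph{both} components of $\alpha-p$ vanish to order at least $n$ at $0$: since each $X_i$ lies in the maximal ideal at $p$, composing gives $\mathrm{ord}_0(X_i\circ\alpha)\ge n>n-1$, so $\lambda$ extends holomorphically and in fact vanishes at $0$. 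Note also that isolatedness is not what makes the extension work---if $S\subset\sing(X)$ one simply gets $X_1\equiv 0$---it is what makes the multiplicity $i_p(X,S)$ defined after the proposition finite. With this order estimate inserted, your proof closes and coincides with the argument in \cite{CSL}.
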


In the previous proposition, if $X_1(t)=\ds\sum_{i\geq m} a_it^i$, with $a_m\neq 0$, then $m$ is called the \emph{multiplicity} of $X$ along $S$ in $p$, and denoted as $i_p(X,S)$. 

\begin{proposition}\label{multiexplo} 
Let $\F$ be a foliation in $\P^2$ given, in coordinates $(x,y,\C^2)$ by the polynomial form $\omega=P dy-Q dx$. Let $p$ be a singularity of $\F|_{\C^2}$ and $B$ a local branch of $\F$ passing through $p$, and let $\pi$ be a blow-up on $p$.
Denoting $\tilde{\F}=\pi^*\F$, $B'=\pi^*B$, $E=\pi^{-1}(p)$ and $p'\in D\cap B'$. Then
$$
i_p(\F,B) =i_{p'}(\pi^*\F,B')+m_p(B)(\nu_p(\F)-1),\quad \mbox{ if $\pi$ es non-dicritical}\\
$$
$$
i_p(\F,B)=i_{p'}(\pi^*\F,B')+m_p(B)\nu_p(\F),
\quad\quad\quad\:\: \mbox{if $\pi$ is dicritical.}
$$
\end{proposition}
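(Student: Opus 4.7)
The plan is to reduce the formula to a direct comparison of orders of two holomorphic vector fields on the disk: the lift $X_1 = g(t)\partial_t$ of $X = P\partial_x + Q\partial_y$ along a Puiseux parametrization of $B$ before the blow-up, and the analogous lift $X_1' = g'(t)\partial_t$ of the field representing $\tilde\F$ along the strict transform $B'$ afterward. After a linear change of coordinates around $p=0$ so that the tangent direction of $B$ is not along the $y$-axis, I would pick a Puiseux parametrization $\alpha(t) = (t^n, \phi(t))$ of $B$, with $n = m_p(B)$ and $\mathrm{ord}_0 \phi \geq n$. Proposition~\ref{propmulti} then determines $g$ from the identity $n t^{n-1} g(t) = P(t^n, \phi(t))$ (and $\phi'(t) g(t) = Q(t^n, \phi(t))$), so $i_p(\F, B) = \mathrm{ord}_0 g$.

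Passing to the blow-up in the affine chart $\pi(u,v) = (u,uv)$, the homogeneity of the initial jets yields $P(u,uv) = u^\nu \hat P(u,v)$ and $Q(u,uv) = u^\nu \hat Q(u,v)$, with $\nu = \nu_p(\F)$, whence $\pi^*X = u^\nu \hat P\,\partial_u + u^{\nu-1}(\hat Q - v\hat P)\,\partial_v$. The field $\tilde X$ representing $\tilde\F = \pi^*\F$ is obtained by cancelling the common factor of $\pi^*X$: in the non-dicritical case $\hat Q - v\hat P$ does not vanish identically on $E = \{u=0\}$, so $\tilde X = u\hat P\,\partial_u + (\hat Q - v\hat P)\,\partial_v$; in the dicritical case $\hat Q - v\hat P = u R(u,v)$ for some holomorphic $R$, and $\tilde X = \hat P\,\partial_u + R\,\partial_v$. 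The strict transform $B'$ is parametrized by $\alpha'(t) = (t^n, \psi(t))$, with $\psi(t) = \phi(t)/t^n$, and meets $E$ at $p' = (0, \psi(0))$.

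Matching $d\alpha' \cdot X_1' = \tilde X \circ \alpha'$ through the $\partial_u$-component gives, in the non-dicritical case, $n t^{n-1} g'(t) = t^n \hat P(t^n, \psi(t))$, while the identity for $g$ before the blow-up can be rewritten as $n t^{n-1} g(t) = P(t^n, t^n\psi(t)) = t^{n\nu}\hat P(t^n, \psi(t))$. Dividing these two identities yields the clean relation $g(t) = t^{n(\nu-1)}\,g'(t)$, and hence $i_p(\F, B) = m_p(B)(\nu_p(\F) - 1) + i_{p'}(\tilde\F, B')$. In the dicritical case the same comparison gives instead $n t^{n-1} g'(t) = \hat P(t^n, \psi(t))$, so $g(t) = t^{n\nu}\,g'(t)$ and the second formula drops out.

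The main technical obstacle is the degeneracy that occurs when $\hat P(t^n, \psi(t))$ vanishes identically, which happens precisely when $B$ is tangent to an irreducible factor of the initial $(n-1)$-jet along which $P_\nu$ vanishes. In that case I would perform the identical matching using the $\partial_v$-component with $Q$ in place of $P$ (or, after the blow-up, with $\hat Q - v\hat P$, respectively $R$), and one must check that the same power of $t$ is lost or gained, so that both formulas hold uniformly. A secondary technical point is the choice of the affine chart: when the tangent of $B$ is along the $y$-axis the symmetric computation in the chart $(u',v') = (uv,v)$ is identical, and the case distinction is handled by an initial linear rotation. Independence of the Puiseux representative and of the coordinates used is then guaranteed by the intrinsic character of $i_p(X,S)$ established in Proposition~\ref{propmulti}.
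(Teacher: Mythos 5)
The paper states Proposition~\ref{multiexplo} without proof (it is quoted as a standard tool in the spirit of~\cite{CSL}), so there is no argument of the authors to compare yours against; judged on its own, your derivation is correct and is the standard one: lift the dual vector field $X=P\partial_x+Q\partial_y$ along a primitive parametrization of $B$, do the same for the saturated pull-back $\tilde X$ along $B'$, and compare the orders of the two lifted fields $g$ and $g'$. Your bookkeeping is right: $\pi^*X=u^{\nu}\hat P\,\partial_u+u^{\nu-1}(\hat Q-v\hat P)\,\partial_v$, so saturating removes $u^{\nu-1}$ in the non-dicritical case and $u^{\nu}$ in the dicritical case, and matching the $\partial_u$-components gives $g=t^{n(\nu-1)}g'$, respectively $g=t^{n\nu}g'$, which is exactly the claimed formula since $\mathrm{ord}_0 g$ and $\mathrm{ord}_0 g'$ are the two indices and are independent of the primitive parametrization chosen. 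One remark: the ``degeneracy'' you set aside is in fact vacuous. If $\hat P(t^n,\psi(t))\equiv 0$, i.e.\ $P\circ\alpha\equiv 0$, then the invariance relation $\phi'\cdot(P\circ\alpha)=n t^{n-1}\cdot(Q\circ\alpha)$ forces $Q\circ\alpha\equiv 0$ as well, so $B$ would lie in $\sing(X)$, contradicting that $p$ is an isolated singularity; and if $\hat P$ merely vanishes at $p'$ without vanishing identically along $B'$, its order along $B'$ appears in both $g$ and $g'$ and cancels in the quotient. Hence the fallback through the $\partial_v$-component (where the orders of $\phi'$ and $\psi'$ do not match up as cleanly) is never needed, and the only genuine normalization is the initial rotation ensuring the tangent of $B$ is not the $y$-axis, which you handle.
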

Let $C$ be an irreducible curve on $\P^2$ of degree $m$ and let $\F$ be a foliation of degree $n$ having $C$ as a separatrix. For each singularity $p$ of $\F$ such that $p\in C$, and each local branch $B$ of $C$ passing through $p$ .

To calculate the genus of an irreducible algebraic curve we recall the following theorem due to Cerveau and Lins-Neto.
\begin{theorem}[Cerveau and Lins-Neto {\cite{CeLi}}]\label{cerveaulins} 
Let $\F$ be a foliation of degree $d$ in $\P^2$, and let $C$ be an irreducible curve on $\P^2$ of degree $m$.
If $C$ is a separatrix of $\F$ then

\begin{equation}\label{eqcl}
 \mathcal{X}(C)+m(d-1)=\sum_{p\in C}\sum_{B\in C\{p\}}i_{p}(\F_1,B),
\end{equation}
where $\mathcal{X}(C)$ is the Euler characteristic of the normalized curve of $C$, and $C\{p\}$ 
is the set of local branches of $C$ passing through $p$.
\end{theorem}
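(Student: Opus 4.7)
The strategy is to realize both sides of identity \eqref{eqcl} as the degree of one and the same line bundle on the normalization $\pi:\tilde{C}\to C$, and then to invoke the classical fact that the degree of a line bundle on a compact Riemann surface equals the order of zeros minus order of poles of any non-zero meromorphic section.

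A foliation $\F$ of degree $d$ on $\P^2$ is described globally by a holomorphic twisted vector field $X\in H^0(\P^2, T\P^2\otimes \Og(d-1))$ whose vanishing locus is the singular set of $\F$. Since $C$ is $\F$-invariant, the restriction of $X$ to the smooth part of $C$ is tangent to $C$, and hence lifts to an \emph{a priori} meromorphic section $\tilde{X}$ of the line bundle
\[
L:=T\tilde{C}\otimes \pi^*\bigl(\Og(d-1)|_C\bigr).
\]
Using $\deg T\tilde{C}=\mathcal{X}(C)$ (by definition of the Euler characteristic of the normalized curve) and $\deg \pi^*\Og(1)=\deg C=m$, one computes $\deg L = \mathcal{X}(C)+m(d-1)$, which is the left-hand side of \eqref{eqcl}.

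The next step is to identify the zero divisor of $\tilde{X}$ with the sum on the right-hand side. For each local branch $B$ of $C$ at a point $p$, the Puiseux parametrization $\alpha:V\to U$ of Proposition \ref{propmulti} is precisely a chart of $\tilde{C}$ centered at the preimage $\tilde{p}\in\tilde{C}$ corresponding to $B$. In this chart $\tilde{X}$ reads as the holomorphic vector field $X_1$ determined by $d\alpha\cdot X_1 = X\circ\alpha$, whose order of vanishing at $\tilde{p}$ is by definition $i_p(\F,B)$. Summing over all branches and all singularities of $\F$ on $C$ produces exactly $\sum_{p\in C}\sum_{B\in C\{p\}} i_p(\F,B)$.

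The main obstacle, and the reason Proposition \ref{propmulti} is the essential input, is verifying that $\tilde{X}$ is holomorphic (no poles) on $\tilde{C}$. On smooth points of $C$ outside $\sing(\F)$ this is automatic. At a singular point of $C$ the canonical map $T\tilde{C}\to \pi^*TC$ is only an inclusion, so the lift could in principle acquire denominators; these must be absorbed by the tangency of $X$ to each branch. Proposition \ref{propmulti} supplies precisely this absorption, producing an honest holomorphic lift $X_1$ on the Puiseux chart of \emph{every} branch through \emph{every} singular point. Once holomorphy of $\tilde{X}$ is established, equating $\deg L$ with the total vanishing order of $\tilde{X}$ yields \eqref{eqcl}.
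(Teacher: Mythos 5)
The paper itself gives no proof of this theorem; it is quoted directly from~\cite{CeLi}. Your argument is correct and is essentially the proof in that reference: a degree-$d$ foliation is a section $X$ of $T\P^2\otimes\Og(d-1)$, invariance of $C$ together with Proposition~\ref{propmulti} (applied branch by branch in the Puiseux charts) gives a holomorphic lift to a section of $T\tilde{C}\otimes\pi^*\bigl(\Og(d-1)|_C\bigr)$, whose local vanishing orders are by definition the indices $i_p(\F,B)$, and the count of zeros equals $\deg T\tilde{C}+m(d-1)=\mathcal{X}(C)+m(d-1)$. The only point you leave implicit is that the lifted section is not identically zero, which holds because $\sing(\F)$ is finite, so $X$ cannot vanish along all of $C$.
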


%
%
%
%

\section{Classification of reversible foliations}\label{sec:reversible}
In this section, we study foliations which have first integrals of the form
\begin{equation}\label{eq:eqrever}
f(x,y,z)=\dfrac{x^{p}(y^2+ax^2+bxz+cz^2)^q}{z^{p+2q}}, 
\end{equation} 
where $p\in\Z\setminus\{0\}$, $q\in\N$ and $a, b, c\in \R$, with $b^2-4ac\neq 0$. Moreover from now on, we will assume that $\gcd(p,q)=1$. Note that this implies that $p+2q\neq 0$.

Note that $df$ induces a foliation $\F$ on $\P^2$ given by the 1-form
\begin{multline}\label{eq:omegarev}
\omega = z(a(p+2q)x^2+py^2+b(p+q)xz+cpz^2)dx\\+2qxyzdy-x(a(p+2q)x^2+(p+2q)y^2+b(p+q)xz+cpz^2)dz
\end{multline}
By straightforward calculations, we obtain
\[
\sing(\F)=
\left\{
\begin{array}{c}
P_1=[0:1:0],\quad P_2=[0:i\sqrt{c}:1],\quad P_3=[0:-i\sqrt{c}:1]\\
P_4=[b(p+q)+\sqrt{\Delta}:0:-2a(p+2q)],\\
P_5=[b(p+q)- \sqrt{\Delta}:0:-2a(p+2q)],\\
P_6=[1:i\sqrt{a}:0],\qquad P_7=[1:-i\sqrt{a}:0]
\end{array}
\right\},
\]
where $\Delta=b^2(p+q)^2-4acp(p+2q)$.

Our analysis will depend on the values of $p$, $p+2q$, $p+q$, among others. In order to simplify the cases that we are going to study,  we will first reduce certain cases to others.
For instance, note that applying the automorphism $[x:y:z]\mapsto[z:y:x]$ on~\eqref{eq:eqrever}, it is enough to consider the case $p<0$.  We now divide our analysis in two cases: $p+2q>0$ and $p+2q<0$.

\subsection{Case $p+2q>0$}

In this case the first integral takes the form
\begin{equation}\label{eq:eqrever2}
f(x,y,z)=\dfrac{(y^2+ax^2+bxz+cz^2)^q}{z^{p+2q}x^{-p}}, 
\end{equation} 
And the generic fiber $C$ is
\[
C\::\:(y^2+ax^2+bxz+cz^2)^q-z^{p+2q}x^{-p}=0.
\]

Lets assume first that $a\neq 0$ and $c\neq 0$.
\begin{proposition}\label{pro:rever:acneq0}
Assume $a\neq 0$ and $c\neq 0$ and let 
\[
C\::\:(y^2+ax^2+bxz+cz^2)^q-z^{p+2q}x^{-p}=0.
\]
be the generic fiber of~\eqref{eq:eqrever2}. Then $C$ is an elliptic curve if, and only if, 
\[
(p,q)\in\{(-1,2),(-3,2)\}.
\]
\end{proposition}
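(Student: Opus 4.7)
The plan is to apply the Cerveau--Lins-Neto formula of Theorem~\ref{cerveaulins} directly to the generic fiber $C$, extract $\chi(C)$ as an explicit function of $q$, and then impose $\chi(C)=0$. Writing $p'=-p>0$, clear denominators to put the generic fiber in polynomial form
\[
C_\lambda:(y^2+ax^2+bxz+cz^2)^q-\lambda\,x^{p'}z^{p+2q}=0,
\]
which is homogeneous of degree $m=2q$. A direct verification shows that the coefficients of $\omega$ in~\eqref{eq:omegarev} satisfy the Euler identity $xP+yQ+zR=0$ and share no common factor, so $\F$ has degree $d=2$. Thus~\eqref{eqcl} reads $\chi(C)+2q=\sum_{p\in C\cap\sing(\F)}\sum_{B\in C\{p\}}i_p(\F,B)$.

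The singularities of $\F$ contributing to the sum are those lying on a generic $C_\lambda$, i.e., the base points of the pencil, where both $y^2+ax^2+bxz+cz^2$ and $x^{p'}z^{p+2q}$ vanish. Since $a,c\neq 0$, these are precisely $P_2,P_3$ (on $\{x=0\}$, where $y^2+cz^2=0$) and $P_6,P_7$ (on $\{z=0\}$, where $y^2+ax^2=0$); in particular, $P_1,P_4,P_5$ are off the generic fiber. The involution $y\mapsto -y$ identifies $P_2\leftrightarrow P_3$ and $P_6\leftrightarrow P_7$, so it suffices to analyze $P_2$ and $P_6$.

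At $P_2$, work in the affine chart $z=1$ with $y=i\sqrt{c}+\tilde y$. Since $\partial_y(y^2+ax^2+bxz+cz^2)=2i\sqrt{c}\neq 0$ at $P_2$, the conic $F:=y^2+ax^2+bxz+cz^2$ is a local uniformizer, and $C$ reads locally as $F^q=\lambda x^{p'}$. Because $\gcd(p',q)=\gcd(p,q)=1$, this germ is irreducible; its unique local branch $B$ admits the Puiseux parametrization $x(t)=t^q$, $F(\alpha(t))=\nu t^{p'}$ with $\nu^q=\lambda$. Using the vector field $X=2qxy\,\partial_x-A(x,y)\,\partial_y$ dual to $\omega|_{z=1}$ (with $A=a(p+2q)x^2+py^2+b(p+q)x+cp$) together with the identity $X_1(t)=X^x(\alpha(t))/x'(t)$ from Proposition~\ref{propmulti}, one computes
\[
X_1(t)=\frac{2q\,t^q(i\sqrt{c}+\tilde y(t))}{qt^{q-1}}=2i\sqrt{c}\,t+O(t^2),
\]
so $i_{P_2}(\F,B)=1$. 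An entirely parallel computation at $P_6$ in the chart $x=1$, with $F^q=\lambda z^{p+2q}$ and $\gcd(q,p+2q)=\gcd(q,p)=1$, yields $i_{P_6}(\F,B)=1$.

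Summing over the four base points gives $\chi(C)=4-2q$, so the elliptic condition $\chi(C)=0$ forces $q=2$; combined with $p<0$, $p+2q>0$ and $\gcd(p,q)=1$ this leaves precisely $p\in\{-1,-3\}$. The main obstacle is the local analysis at the base points: one must correctly read off from the germ $F^q-\lambda x^{p'}$ that $\gcd(p,q)=1$ produces a \emph{single} branch (and not several), and verify that the leading coefficient in $X_1$ does not vanish. Should the latter fail for some choice of parameters, the cleanest remedy would be to replace the direct computation by a blow-up via Proposition~\ref{multiexplo}, expressing $i_{P_2}(\F,B)$ in terms of data on the exceptional divisor.
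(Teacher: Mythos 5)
Your proposal is correct and follows the same global strategy as the paper: apply the Cerveau--Lins Neto formula to the degree-$2q$ generic fiber, observe that the only singularities of $\F$ on it are the four base points $P_2,P_3,P_6,P_7$, show each carries a single branch of $C$ with $i_P(\F,B)=1$, and conclude $\gen(C)=q-1$. Where you genuinely diverge is in the local step. The paper computes $i_{P_2}(\F,B)$ by first linearizing: it reads off the eigenvalues $2q\sqrt{c}\,i$ and $-2p\sqrt{c}\,i$, invokes the Poincar\'e or Dulac normal form theorem (using the existence of a first integral to kill the resonant term $\eps v^{\lambda}$ in the Dulac case), and then reads the branch count and multiplicity off the model $pv\,du+qu\,dv$, $v^q=u^{-p}$. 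You instead note that the conic $F$ is a local uniformizer at each base point, so $(x,F)$ (resp.\ $(z,F)$) are adapted coordinates in which the germ of $C$ is $v^q=\lambda u^{p'}$, irreducible since $\gcd(p',q)=1$, and you compute $i_P(\F,B)$ directly from the defining relation $x'(t)X_1(t)=X^x(\alpha(t))$ of Proposition~\ref{propmulti} applied to the explicit Puiseux parametrization, getting $X_1(t)=2i\sqrt{c}\,t+O(t^2)$ (nonvanishing leading coefficient precisely because $c\neq 0$, resp.\ $a\neq 0$). Your route is more elementary and self-contained --- it avoids the normal form theorems and the $\eps=0$ discussion entirely --- at the cost of being tied to the explicit parametrization; the paper's normal-form argument is the one it then reuses almost verbatim at the semisimple singularities in Propositions~\ref{pro:rever:aneq0c0}, \ref{prop8.5} and~\ref{prop8.6}, which is presumably why it is set up that way. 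Both computations yield the same multiplicities, the same genus formula $\gen(C)=q-1$, and the same solution set $(p,q)\in\{(-1,2),(-3,2)\}$.
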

\begin{proof}
Let $\F$ be the foliation induced by $df$. In this case
\[
S_C=\sing(\F)\cap C=\{P_2,P_3,P_6,P_7\}.
\]	

Moreover, $\deg(C)=2q$ so, by Theorem~\ref{cerveaulins},
\begin{equation}\label{eqgen1}
2-2\gen(C)=\sum_{P\in {S_C}}i(\F,B_{P})-2q(2-1),
\end{equation}
where $B_{P}$ are the local branches of $C$ at $P$.  

Let us calculate $i(\F,B_{P_2})$, the remaining multiplicities are analogous. 
Locally, in $z=1$, we can write $P_2=(0,i\sqrt{c})$, $C$ is given by
\[
(y^2+ax^2+bx+c)^q=x^{-p}
\]
and $\F$ is locally defined in $P_2$ by  
\[
\omega = (2ip\sqrt{c}y+b(p+q)x+a(p+2q)x^2)dx+(2qi\sqrt{c}x+2qxy)dy
\]
Therefore the eigenvalues associated to $P_2$ are $2q\sqrt{c}i$ and $-2p\sqrt{c}i$.  Hence we have two possibilities:
\begin{enumerate}
	\item If $-\dfrac{q}{p} \notin \N\cup \dfrac{1}{\N}$, by Poincar\'e's normal form theorem, there exists a biholomorphism $\varphi:(U,0) \to (V,0)$, $(x,y) \mapsto (u,v)$, with $\varphi(0)=0$, such that 
	$\varphi^*(\omega)=pvdu+qudv$.
	\item 
	If $-\dfrac{q}{p} \in \N\cup \dfrac{1}{\N}$, by Dulac's normal form theorem, 
	there exists a biholomorphism $\varphi:(U,0) \to (V,0)$, $(x,y) \mapsto (u,v)$, with $\varphi(0)=0$, such that  $\varphi^*(\omega)=vdu+(\lambda u+\eps v^{\lambda})dv$, where $\lambda$ is the natural number between $-\dfrac{p}{q}$ or $-\dfrac{q}{p}$, and $\eps \in \{0,1\}$.
	Moreover $\varepsilon=0$, since $\F$ has a first integral.
\end{enumerate}
In both cases, without loss of generality, we may assume that 
\[
\omega=pvdu+qudv \quad \text{and} \quad C=\{v^q-u^{-p}=0\}.
\]
Since $(p,q)=1$, $C$ is the only branch passing through $P_2$, hence $B_{P_2}=C$ and $i(\F,B_{P_2})=1$.

Using the same technique, we can prove that there is only one branch of $C$ passing through $P_3$, $P_6$ and $P_7$, such that $i(\F,B_{P_i})=1$. Replacing these values in~\eqref{eqgen1}, we obtain
\[
\gen(C)=q-1.
\]
Therefore, $C$ is an elliptic curve if, and only if, $q=2$. Since $p+2q>0$, $p<0$ and $\gcd(p,q)=1$, we conclude that $p=-1,-3$.
\end{proof}

We now assume that $a\neq 0$ and $c=0$.

\begin{proposition}\label{pro:rever:aneq0c0}
Assume $a\neq 0$ and $c=0$ and let 
\[
C\::\:(y^2+ax^2+bxz)^q-z^{p+2q}x^{-p}=0.
\]
be the generic fiber of~\eqref{eq:eqrever2}. Then $C$ is an elliptic curve if, and only if, 
\[
(p,q)\in\{(-1,2),(-2,3),(-4,3),(-5,3),(-5,4),(-7,4)\}.
\]

\end{proposition}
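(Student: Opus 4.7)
The plan is to follow the same strategy as in the proof of Proposition~\ref{pro:rever:acneq0}: compute the genus of the generic fiber $C$ via Theorem~\ref{cerveaulins} and isolate the cases with $\gen(C)=1$. After the automorphism $[x{:}y{:}z]\mapsto[z{:}y{:}x]$ we may assume $p<0$; set $p'=-p>0$. The generic fiber is
\[
C\colon (y^2+ax^2+bxz)^q-\lambda\, z^{2q-p'}x^{p'}=0,\qquad \lambda\in\C^{\times},
\]
a curve of degree $2q$. Since $\deg\F=2$, Theorem~\ref{cerveaulins} reads
\[
2-2\gen(C)=\sum_{P\in S_C}\sum_{B\in C\{P\}}i_P(\F,B)-2q.
\]
First I would identify $S_C=\sing(\F)\cap C$: with $c=0$ the points $P_2,P_3,P_5$ all coalesce to $[0{:}0{:}1]$, and a check of the base locus of $f$ shows that $S_C=\{P_2,P_6,P_7\}$, while $P_1$ and $P_4$ lie only on special fibers.

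At $P_6$ and $P_7$ the analysis is essentially identical to that of Proposition~\ref{pro:rever:acneq0}. In the chart $x=1$, after shifting $y$ by $\pm i\sqrt{a}$, the linear part of $\omega$ has eigenvalues whose ratio is $q/(2q-p')$, and Poincaré's (or Dulac's, when this ratio lies in $\N\cup 1/\N$) normal form applies with $\varepsilon=0$ since $\F$ admits a first integral. Because $\gcd(2q-p',q)=\gcd(p',q)=1$, $C$ has a single smooth branch through each point, and the Puiseux pull-back of Proposition~\ref{propmulti} gives $i_{P_6}(\F,B_{P_6})=i_{P_7}(\F,B_{P_7})=1$.

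The essential new work is at $P_2$. In the chart $z=1$,
\[
\omega=(a(p+2q)x^2+py^2+b(p+q)x)\,dx+2qxy\,dy,
\]
whose linear part $b(p+q)x\,dx$ produces a nilpotent singularity: $\nu_{P_2}(\F)=1$ but both eigenvalues vanish, so Poincaré–Dulac is not directly applicable. To compute $I:=\sum_{B\in C\{P_2\}}i_{P_2}(\F,B)$ I would combine two ingredients. First, a Newton-polygon analysis of $(y^2+ax^2+bx)^q-\lambda x^{p'}$ enumerates the local branches of $C$ and their Puiseux parametrizations, splitting naturally into the cases $p'<q$ (lower edge from $(p',0)$ to $(0,2q)$, producing $\gcd(p',2q)$ branches) and $p'>q$ (lower edge from $(q,0)$ to $(0,2q)$, whose edge polynomial $(y^2+bx)^q$ forces the substitution $\eta=y^2+bx$ and a second Newton-polygon step). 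Second, a finite sequence of blow-ups at $P_2$ and its infinitely near points, applying Proposition~\ref{multiexplo} at each step, propagates $i_{P_2}(\F,B)$ from the final reduced model, where every singularity meeting the strict transform of $C$ is of Poincaré–Dulac type and contributes $1$.

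Assembling these contributions gives $I$ as an explicit function of $p'$ and $q$, and the Cerveau–Lins-Neto formula reduces to $\gen(C)=q-I/2$. Imposing $\gen(C)=1$, i.e.\ $I=2(q-1)$, under the constraints $\gcd(p',q)=1$ and $0<p'<2q$, leaves exactly the six pairs listed in the statement. The main obstacle is precisely the local analysis at $P_2$: since $C$ and the polar line $\{x=0\}$ are tangent there, the strict transform of $C$ after the first blow-up lands at a new singularity of $\pi^{\ast}\F$ of order greater than $1$, and the case splitting according to the parities and divisibilities of $p'$ and $q$ must be tracked carefully through the blow-up sequence.
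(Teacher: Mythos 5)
Your strategy is exactly the paper's: apply Theorem~\ref{cerveaulins} to the degree-$2q$ curve $C$, obtain index $1$ at the two simple singularities $P_6,P_7$, and reduce everything to the local index at the nilpotent point $P_2=[0{:}0{:}1]$, with a case split governed by the sign of $p+q$ (your $p'<q$ versus $p'>q$). The identification of $S_C$, the treatment of $P_6,P_7$ via Poincar\'e--Dulac normal forms with $\varepsilon=0$, and the reduction $\gen(C)=q-I/2$ are all correct and agree with the paper.

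The gap is that the one step carrying all the content of the proposition --- the computation of $I=\sum_{B\in C\{P_2\}}i_{P_2}(\F,B)$ --- is only described, never performed. You state that a Newton-polygon analysis plus a chain of blow-ups ``gives $I$ as an explicit function of $p'$ and $q$,'' but you do not produce that function, and without it the claim that $I=2(q-1)$ ``leaves exactly the six pairs listed'' cannot be verified. The paper resolves $P_2$ with two blow-ups (the second after the substitution $u=ys$), arrives at a linearizable singularity with eigenvalues $-p$ and $2(p+q)$, counts $m=\gcd(2q,-p)$ branches each of multiplicity $-p/m$ when $p+q\ge 0$ (resp.\ $m=\gcd(q,-2p)$ branches when $p+q<0$), and via Proposition~\ref{multiexplo} obtains $I=\gcd(2q,-p)-p$, resp.\ $I=\gcd(q,-2p)+q$. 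The ellipticity condition then becomes the arithmetic equation $\gcd(2q,-p)+2=p+2q$, resp.\ $\gcd(q,-2p)+2=q$, and solving these under $\gcd(p,q)=1$, $p<0<p+2q$ is what yields $\{(-1,2),(-2,3)\}$ and $\{(-4,3),(-5,3),(-5,4),(-7,4)\}$. Both the explicit index at $P_2$ and the resolution of the resulting Diophantine conditions must be supplied before the proof is complete.
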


\begin{proof}
Let $\F$ be the foliation induced by $df$. In this case
\[
S_C=\sing(\F)\cap C=\{P_2,P_6,P_7\},
\]	
where $P_2=P_3=[0:0:1]$, since $c=0$. Moreover, $b^2-4ac\neq 0$ implies $b\neq 0$.
	
We now repeat the steps in the proof of Proposition~\ref{pro:rever:acneq0}. Note that  $\deg(C)=2q$ so, by Theorem~\ref{cerveaulins},
\begin{equation}\label{eq:rever:aneq0c0}
2-2\gen(C)=\sum_{P\in {S_C}}i(\F,B_{P})-2q,
\end{equation}
where $B_{P}$ are the local branches of $C$ at $P$. In the same way, we can prove that $i(\F,B_{P_6})=i(\F,B_{P_7})=1$. Remains to calculate the multiplicity of $P_2$.
	
Let us calculate $i(\F,B_{P_2})$.
In $U=\{z=1\}$, we can write $P_2=(0,0)$, $C$ is given by
\begin{equation}\label{eq:rever:genfib1}
(y^2+ax^2+bx)^q-x^{-p}=0
\end{equation}
and $\F$ is locally defined in $P_2$ by  	
\[
\omega = (a(p+2q)x^2+py^2+b(p+q)x)dx+2qxydy.
\]

Note that $P_2$ is a nilpotent singularity so, to calculate the branches passing through $P_2$, we need to do blow-ups.
Let $\pi_1:\tilde{U}\to U$ the blow-up in $P_2$, then the induced foliation  $\F'=\pi_1^*\F$ is given, in coordinates $(u,y)$, by
\begin{multline*}	
\omega'=\pi^*_1\omega=u(a(p+2q)u^2y+(p+2q)y+b(p+q)u)dy\\
+y(a(p+2q)u^2y+py+b(p+q)u)du. 
\end{multline*}

The strict transformation $C'=\pi^*_1C$ of $C$ will depend on the sign of $p+q$. We first assume that $p+q\geq 0$. In this case, $C'$ is given, in coordinates $(u,y)$, by $C'\::\: y^{p+q}(y+au^2y+bu)^q-u^{-p}=0$. Moreover, $\sing\left(\F'\right)\cap C'=\{p'=(u,y)=(0,0)\}$.  

Blowing-up again at $p'$, using the change of coordinates $u=ys$, $y=ru$, we obtain, in coordinates $(s,y)$,
$C''=\pi^*_2C'\::\: y^{2p+2q}(1+ay^2s^2+bs)^q-s^{-p}=0$ and
\begin{multline}	
\omega''=\pi^*_2\omega'=y(a(p+2q)y^2s^2+p+b(p+q)s)ds\\
+2s(a(p+2q)y^2s^2+(p+q)+b(p+q)s)dy\label{eq:omegablowup2}.
\end{multline}
In this case, $\sing\left(\pi^*_2\F'\right)\cap C''=\{p''=(s,y)=(0,0)\}$. Note that the eigenvalues associated to $p''$ are $-p$ and $2(p+q)$. Hence, using the same argument as in the proof of Proposition~\ref{pro:rever:acneq0},  we can assume without loss of generality, that in $p''$, $w''$ and $C''$ respectively, take the form
\[
\omega''=ypds+2(p+q)sdy, \qquad C''\::\: y^{2(p+q)}-s^{-p}=0,
\]
since $i(\F'',B_{p''})$ independs of the change of coordinates, where $B_{p''}$ is any branch of $C''$. Now, we write
\[
y^{2(p+q)}-s^{-p}=\prod_{k=0}^{m-1}(y^{2(p+q)/m}-s^{-p/m}e^{2\pi i k/m}),
\]
where $m=\gcd(2p+2q,-p)=\gcd(2q,-p)$ is the number of branches of $C''$ passing through $p''$. Hence $B_{p''}$ is locally parametrized by
\[
(t^{2(p+q)/m},t^{-p/m}e^{2\pi i k/(2p+2q)}).
\]
In particular, $i(\F'',B_{p''})=1$. Using Proposition~\ref{multiexplo}, as $p''$ is a non-dicritical singularity, we have
\[
i(\F',B_{p'})=i(\F'',B_{p''})+m_{p'}(B_{p'})(\nu_{p'}(\F')-1)=1+m_{p'}(B_{p'}).
\]
To obtain $m_{p'}(B_{p'})$, note that $e^{2\pi i k/(2p+2q)}(t^{(p+2q)/m},t^{-p/m})$ is a Puiseux parametrization of $B_{p'}$, so, by the Puiseux parametrization theorem, $m_{p'}(B_{p'})=\min\{(p+2q)/m,-p/m\}=-p/m$, where the last equality holds since $p+q\geq 0$. Altogether, we obtain
\[
i(\F',B_{p'})=1+m_{p'}(B_{p'})=1-\dfrac{p}{m}.
\]
In the same way, if $B_{p}$ is a branch of $C$, then
\[
i(\F,B_{p})=1-\dfrac{p}{m}.
\]
Replacing this information, together with the values of $i(\F,B_{P_6})$ and $i(\F,B_{P_7})$, in~\eqref{eq:rever:aneq0c0}, we have
\[
2-2\gen(C)=2+m\left(1-\dfrac{p}{m}\right)-2q.
\]
Therefore, $C$ is an elliptic curve if, and only if, 
\[ 
\gcd(2q,-p)+2=p+2q,
\] 
whose solution set is $\{(-2,3),(-1,2)\}$.

Now assume $p+q<0$.  
%
In this case %
$C'\::\:(y+au^2y+bu)^q-u^{-p}y^{-(p+q)}=0$, $\sing(\F')\cap C'=\{p'=(u,y)=(0,0)\}$ and, after a blow-up on $p'$, we obtain $i(\F'',B_{p''})=1+\dfrac{q}{m}$, with $m=\gcd(q,-2(p+q))=\gcd(q,-2p)$. Therefore $C$ is an elliptic curve if, and only if,
\[ 
\gcd(q,-2p)+2=q,
\]
whose solution set is $\{(-7,4),(-5,3),(-5,4),(-4,3)\}$.
\end{proof}
\begin{remark}
The calculation of $i(\F,B_{P_2})$ in the previous proof, independs on the value of $a$, since the linear part of~\eqref{eq:omegablowup2} does not depend on $a$.
\end{remark}

Now we consider the case $a=c=0$. Note that this implies that $b\neq 0$
\begin{proposition}\label{pro:rever:a0c0}
Assume $a=c=0$ and let 
\[
C\::\:(y^2+bxz)^q-z^{p+2q}x^{-p}=0.
\]
be the generic fiber of~\eqref{eq:eqrever2}. 
Then $C$ is an elliptic curve if, and only if, $(p,q)$ takes any of the following forms
\begin{align*}
(p,q)&=(2-6u,1+6u),&(p,q)&=(4-6u,-1+6u),\\
(p,q)&=(1-6u,2+6u),&(p,q)&=(5-6u,-2+6u),\\
(p,q)&=(-4-6u,1+6u),&(p,q)&=(-3-6u,-1+6u),\\
(p,q)&=(-5-6u,2+6u),&(p,q)&=(-1-6u,-2+6u),\\
(p,q)&=(1-2u,1+2u),&(p,q)&=(-3-2u,1+2u),
\end{align*}
for any $u\in\N$.
\end{proposition}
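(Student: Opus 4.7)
The plan is to mimic Proposition~\ref{pro:rever:aneq0c0}, now with $P_6=P_7=[1:0:0]$ also collapsed to a single nilpotent singularity, so that both $P_2=[0:0:1]$ and $P_6=[1:0:0]$ require the two-blow-up analysis. For $P_2$, the remark following Proposition~\ref{pro:rever:aneq0c0} already tells us that the multiplicity calculation is insensitive to $a$, so exactly the same formulas apply: the total contribution at $P_2$ is $m_1-p$ with $m_1=\gcd(2q,-p)$ when $p+q\geq 0$, and $m_1'+q$ with $m_1'=\gcd(q,-2p)$ when $p+q<0$.

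For $P_6$ I would invoke the involution $\sigma[x:y:z]=[z:y:x]$, which exchanges $P_2\leftrightarrow P_6$ and, since $a=c=0$, takes the first integral of \eqref{eq:eqrever2} to one of the same form but with parameters $(\tilde p,q):=(-(p+2q),q)$. Hence $i(\F,B_{P_6})$ for parameters $(p,q)$ coincides with $i(\F,B_{P_2})$ computed with $(\tilde p,q)$, and since $\tilde p+q=-(p+q)$ the sub-case at $P_6$ is always the opposite of the one at $P_2$. This produces a contribution of $\gcd(2q,p)+(p+2q)$ when $p+q<0$ and of $\gcd(q,2p)+q$ when $p+q>0$.

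Substituting into Theorem~\ref{cerveaulins} with $\deg C=2q$ and $\deg\F=2$ and imposing $\gen(C)=1$, both sign cases collapse into the single equation $m+\tilde m=|p+q|$, where $m$ and $\tilde m$ are the two gcd's above. Since $\gcd(p,q)=1$, each gcd belongs to $\{1,2\}$, and their sum cannot be $4$ (this would force $p,q$ both even); hence $|p+q|\in\{2,3\}$.

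The last step, which I expect to be the main obstacle purely for bookkeeping reasons, is the enumeration of all $(p,q)$ satisfying $p<0$, $p+2q>0$, $\gcd(p,q)=1$ and one of the two equalities $|p+q|\in\{2,3\}$. The case $|p+q|=2$ forces both $p$ and $q$ odd and yields the two linear families $(1-2u,1+2u)$ and $(-3-2u,1+2u)$ directly. The case $|p+q|=3$ forces exactly one of $p,q$ to be even; the condition $3\nmid p$ (equivalent to $\gcd(p,q)=1$ when $p+q=\pm 3$) partitions the admissible integers into two residue classes modulo~$6$, and combining the two signs of $p+q$ with the two parity choices for the even entry and the two residues modulo~$6$ produces the remaining eight families displayed in the statement.
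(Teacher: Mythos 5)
Your argument is correct and is essentially the paper's own proof: the contribution at $P_2$ is recycled from Proposition~\ref{pro:rever:aneq0c0} via the remark, the contribution at $P_6$ is obtained through the involution $[x:y:z]\mapsto[z:y:x]$ with $\tilde p=-(p+2q)$, and Theorem~\ref{cerveaulins} then yields the same gcd equation; your only (welcome) streamlining is to merge the two sign cases into $\gcd(2q,p)+\gcd(q,2p)=|p+q|$ and to supply the short arithmetic argument (each gcd lies in $\{1,2\}$ and the sum cannot be $4$) that the paper leaves implicit before listing solutions. One caveat: your enumeration would produce the family $(-2-6u,-1+6u)$ where the statement prints $(-3-6u,-1+6u)$; the printed entry has $p+q=-4$ and fails the equation (e.g.\ $(p,q)=(-9,5)$ gives $\gen(C)=2$), so this is a typo in the proposition rather than a gap in your proof, but you should not assert that your list matches the displayed one verbatim.
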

\begin{proof}
Let $\F$ be the foliation induced by $df$. In this case
\[
S_C=\sing(\F)\cap C=\{P_2,P_6\},
\]	
where $P_2=P_3=[0:0:1]$ and $P_6=P_7=[1:0:0]$, since $a=c=0$.  

By our previous remark, we can reuse the calculations made in the proof of Proposition~\ref{pro:rever:aneq0c0}, to obtain
\[
i(\F,B_{P_2})=
\begin{cases}
1-\dfrac{p}{m},\text{ with }m=\gcd(2q,-p)\text{ branches},&\text{if }p+q\geq 0\\
\\
1+\dfrac{q}{m},\text{ with }m=\gcd(q,-2p)\text{ branches},&\text{if }p+q<0.
\end{cases}
\]

We now repeat the steps in the proof of Proposition~\ref{pro:rever:acneq0}. Note that  $\deg(C)=2q$ so, by Theorem~\ref{cerveaulins},
\begin{equation}\label{eq:rever:a0c0}
2-2\gen(C)=\sum_{P\in {S_C}}i(\F,B_{P})-2q,
\end{equation}
where $B_{P}$ are the local branches of $C$ at $P$. In the same way, we can prove that $i(\F,B_{P_6})=i(\F,B_{P_7})=1$. Remains to calculate the multiplicity of $P_2$.
	
Let us calculate $i(\F,B_{P_2})$. 
In $U=\{x=1\}$, we can write $P_6=(0,0)$, $C$ is given by
\begin{equation}\label{eq:rever:genfib2}
(y^2+bz)^q-z^{p+2q}=0
\end{equation}
and $\F$ is locally defined in $P_6$ by  	
\[
\omega = 2qyzdy-((p+2q)y^2+b(p+q)z)dz.
\]
Observe that, doing the automorphism $[x:y:z]\mapsto[z:y:x]$ and denoting $p'=-(p+2q)$, we obtain $p'<0$, $p'+2q=-p>0$ and $p'+q=-(p+q)$, and we put ourselves again in the proof of Proposition~\ref{pro:rever:acneq0}, considering $p'$ instead of $p$. Therefore, 
\[
i(\F,B_{P_6})=
\begin{cases}
1+\dfrac{p+2q}{m},\text{ with }m=\gcd(2q,-p)\text{ branches},&\text{if }p+q< 0\\
\\
1+\dfrac{q}{m},\text{ with }m=\gcd(q,-2p)\text{ branches},&\text{if }p+q\geq 0.
\end{cases}
\]
As always, using Theorem~\ref{cerveaulins}, we obtain
\[
2-2\gen(C)+2q=
\begin{cases}
\gcd(2q,-p)-p+\gcd(q,-2p)+q,&\text{if }p+q\geq 0,\\
\gcd(q,-2p)+q+\gcd(2q,-p)+p+2q,&\text{if }p+q<0.
\end{cases}
\]

We now divide our analysis in two cases. If $p+q\geq 0$, $C$ will be an elliptic curve if, and only if, 
\[
p+q=\gcd(2q,-p)+\gcd(q,-2p).
\]
This equation gives the set of solutions
\begin{gather*}
(p,q)=(2-6u,1+6u),\quad (p,q)=(4-6u,-1+6u),\\
(p,q)=(1-6u,2+6u),\quad (p,q)=(5-6u,-2+6u),\\
(p,q)=(1-2u,1+2u),
\end{gather*}
for every $u\in\N$.

On the other hand, if $p+q<0$, $C$ will be an elliptic curve if, and only if, 
\[
p+q+\gcd(2q,-p)+\gcd(q,-2p)=0,
\]
whose set of solutions is 
\begin{gather*}
(p,q)=(-4-6u,1+6u),\quad (p,q)=(-3-6u,-1+6u),\\
(p,q)=(-5-6u,2+6u),\quad (p,q)=(-1-6u,-2+6u),\\
(p,q)=(-3-2u,1+2u),
\end{gather*}
for every $u\in\N$.
\end{proof}

\subsection{Case $p+2q<0$}

In this case the first integral takes the form
\begin{equation}\label{eq:eqrever3}
f(x,y,z)=\dfrac{z^{-(p+2q)}(y^2+ax^2+bxz+cz^2)^q}{x^{-p}}, 
\end{equation} 
And the generic fiber $C$ is
\[
C\::\:z^{-(p+2q)}(y^2+ax^2+bxz+cz^2)^q-x^{-p}=0,
\]
whose degree is $\deg(C)=-p$.

We now follow the steps of the previous case, obtaining the following table:
\[
{\def\arraystretch{1.9}
\begin{array}{|c|c|c|c||c|c|}
\cline{2-6}
\multicolumn{1}{c}{}    &\multicolumn{3}{|c||}{c\neq 0}&\multicolumn{2}{c|}{c=0}\\
\hline 
\sing(\F)\cap C&P_1&P_2&P_3&P_1&P_2=P_3\\
\hline
i(\F,B_P)& (-p,2q)&1&1&(-p,2q)&(q,-2p)+q\\
\hline
2-2\gen(C)&\multicolumn{3}{|c||}{p+2+\gcd(-p,2q)}&\multicolumn{2}{c|}{p+q+\gcd(-p,2q)+\gcd(q,-2p)}\\
\hline
\end{array}}
\]
In conclusion, we have the following proposition.
\begin{proposition}\label{pro:rever:case32}
Let $C$ be the generic fiber of~\eqref{eq:eqrever3}. If $c\neq 0$ then $C$ is an elliptic curve if, and only if, 
\[
(p,q)\in\{(-4,1),(-3,1)\}.
\]
On the other hand, if $c=0$ then $C$ is an elliptic curve if, and only if, 
\[
(p,q)\in\{(-4,1),(-3,1),(-5,2)\}.
\]
\end{proposition}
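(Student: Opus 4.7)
The plan is to follow the same template used in Propositions~\ref{pro:rever:acneq0}, \ref{pro:rever:aneq0c0} and~\ref{pro:rever:a0c0}, but adapted to~\eqref{eq:eqrever3}, where $-p$ and $-(p+2q)$ are both positive. First, I would identify $\sing(\F)\cap C$. Because now $x$ is the only variable appearing in the denominator of $f$, the indeterminacy locus of the pencil is $\{x=0\}\cap\{z^{-(p+2q)}(y^2+ax^2+bxz+cz^2)^q=0\}$, which reduces to $\{x=0\}\cap\{z(y^2+cz^2)=0\}$. Intersecting with the list $P_1,\dots,P_7$ given in the excerpt yields $\{P_1,P_2,P_3\}$ when $c\neq 0$ and $\{P_1,P_2=P_3\}$ when $c=0$; in particular $P_4,P_5,P_6,P_7$ drop out because $x\neq 0$ there.

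Second, I would compute the multiplicities $i(\F,B_P)$ at each $P$. At $P_2$ and $P_3$ with $c\neq 0$, the linear part of $\omega$ is non-degenerate with eigenvalues in the Poincar\'e/Dulac ranges used in the proof of Proposition~\ref{pro:rever:acneq0}; since $\gcd(p,q)=1$, $C$ has a single branch at each such point and $i(\F,B_{P_2})=i(\F,B_{P_3})=1$. The points $P_1=[0:1:0]$ and, when $c=0$, $P_2=P_3=[0:0:1]$ are nilpotent, and I would handle them by the chart change $[x:y:z]\mapsto[z:y:x]$, which sends~\eqref{eq:eqrever3} to an expression of the form~\eqref{eq:eqrever2} with new exponents $p'=-(p+2q)$ and $q'=q$ satisfying $p'<0$ and $p'+2q'=-p>0$. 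After this swap, the computation at $P_1$ becomes formally identical to the blow-up computation performed at $P_6$ in the proofs of Propositions~\ref{pro:rever:aneq0c0} and~\ref{pro:rever:a0c0}; this yields $i(\F,B_{P_1})=\gcd(-p,2q)$ (summed over $\gcd(-p,2q)$ local branches), and when $c=0$, $i(\F,B_{P_2})=\gcd(q,-2p)+q$ by the analogous argument with the roles of $p+q$ and its sign tracked carefully.

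Third, I apply Theorem~\ref{cerveaulins} with $d=\deg(\F)=2$ and $m=\deg(C)=-p$, obtaining
\[
2-2\gen(C)=\sum_{P\in S_C} i(\F,B_P)+p,
\]
which reproduces the table: $2-2\gen(C)=p+2+\gcd(-p,2q)$ when $c\neq 0$, and $2-2\gen(C)=p+q+\gcd(-p,2q)+\gcd(q,-2p)$ when $c=0$. Finally, I impose $\gen(C)=1$ and solve. For $c\neq 0$ the equation $\gcd(-p,2q)=-p-2$ forces $-p-2$ to divide $2$, so $p\in\{-3,-4\}$; combined with $p+2q<0$ and $\gcd(p,q)=1$ this leaves exactly $(-3,1)$ and $(-4,1)$. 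For $c=0$, the coprimality $\gcd(p,q)=1$ collapses $\gcd(-p,2q)$ to $\gcd(-p,2)$ and $\gcd(q,-2p)$ to $\gcd(q,2)$, reducing the equation to $-p-q\in\{2,3,4\}$; a case analysis on the parities of $p$ and $q$ together with $-p\geq 2q+1$ then yields precisely $(-3,1)$, $(-4,1)$ and $(-5,2)$.

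The main obstacle is verifying, via the $[x:y:z]\mapsto[z:y:x]$ automorphism, that the nilpotent blow-up computations at $P_1$ (and at $P_2=P_3$ when $c=0$) really do match those carried out at $P_6$ in Propositions~\ref{pro:rever:aneq0c0} and~\ref{pro:rever:a0c0}, with the correct accounting of branches and of the $p+q\gtreqless 0$ dichotomy that governed the earlier formulas; once that reduction is in place the Diophantine step is routine.
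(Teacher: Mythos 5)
Your proposal is correct and follows essentially the same route as the paper: the paper's proof of this proposition consists precisely of "following the steps of the previous case" to produce the table with $S_C=\{P_1,P_2,P_3\}$ (resp.\ $\{P_1,P_2=P_3\}$), the multiplicities $\gcd(-p,2q)$, $1$, $1$ (resp.\ $\gcd(-p,2q)$ and $\gcd(q,-2p)+q$), and the resulting genus formulas, which you reproduce exactly, including the reduction of the nilpotent points to the computations of Propositions~\ref{pro:rever:aneq0c0} and~\ref{pro:rever:a0c0} via $[x:y:z]\mapsto[z:y:x]$. The concluding Diophantine analysis also matches (your bound $-p-q\in\{2,3,4\}$ is slightly generous, since coprimality rules out the value $4$, but this does not affect the solution set).
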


Combining Propositions~\ref{pro:rever:acneq0}, \ref{pro:rever:aneq0c0}, \ref{pro:rever:a0c0} and \ref{pro:rever:case32}, we obtain the following theorem.

\begin{theorem}\label{teo:reversible}
Let $f$ as in~\eqref{eq:eqrever} and let $\F$ be the foliation induced by $df$. Then $\F$ is elliptic if, and only if, after an automorphism of $\P^2$, it has a first integral of the form:

\begin{enumerate}
 \item If $p+2q>0$, $a\neq 0$ and $c\neq 0$:
 \begin{gather*}
f(x,y,z)=\dfrac{(y^2+ax^2+bxz+cz^2)^2}{xz^{3}},\quad f(x,y,z)=\dfrac{(y^2+ax^2+bxz+cz^2)^2}{x^3z}\\
\end{gather*}
\item If $p+2q>0$, $ab\neq 0$ and $c=0$:
\begin{gather*}
f(x,y,z)=\dfrac{(y^2+ax^2+bxz)^2}{xz^{3}},\quad f(x,y,z)=\dfrac{(y^2+ax^2+bxz)^3}{x^2z^{4}},\\
f(x,y,z)=\dfrac{(y^2+ax^2+bxz)^3}{x^{4}z^{2}},\quad f(x,y,z)=\dfrac{(y^2+ax^2+bxz)^3}{x^{5}z},\\
f(x,y,z)=\dfrac{(y^2+ax^2+bxz)^4}{x^{5}z^{3}},\quad f(x,y,z)=\dfrac{(y^2+ax^2+bxz)^4}{x^{7}z},
\end{gather*}
\item If $p+2q>0$, $a=c=0$:
 \begin{gather*}
f(x,y,z)=\dfrac{(y^2+bxz)^{1+6u}}{x^{-2+6u}z^{4+6u}},\quad f(x,y,z)=\dfrac{(y^2+bxz)^{-1+6u}}{x^{-4+6u}z^{2+6u}},\\
f(x,y,z)=\dfrac{(y^2+bxz)^{2+6u}}{x^{-1+6u}z^{5+6u}},\quad f(x,y,z)=\dfrac{(y^2+bxz)^{-2+6u}}{x^{-5+6u}z^{1+6u}},\\
f(x,y,z)=\dfrac{(y^2+bxz)^{1+6u}}{x^{4+6u}z^{-2+6u}},\quad f(x,y,z)=\dfrac{(y^2+bxz)^{-1+6u}}{x^{3+6u}z^{-5+6u}},\\
f(x,y,z)=\dfrac{(y^2+bxz)^{2+6u}}{x^{5+6u}z^{-1+6u}},\quad f(x,y,z)=\dfrac{(y^2+bxz)^{-2+6u}}{x^{1+6u}z^{-5+6u}},\\
f(x,y,z)=\dfrac{(y^2+bxz)^{1+2u}}{x^{-1+2u}z^{3+2u}},\quad f(x,y,z)=\dfrac{(y^2+bxz)^{1+2u}}{x^{3+2u}z^{-1+2u}},
\end{gather*}
for any $u\in\N$,
\item If $p+2q<0$ and $c\neq 0$:
 \begin{gather*}
f(x,y,z)=\dfrac{(y^2+ax^2+bxz+cz^2)z^2}{x^{4}},\quad f(x,y,z)=\dfrac{(y^2+ax^2+bxz+cz^2)z}{x^{3}},\\
\end{gather*}
\item If $p+2q<0$ and $c=0$:
 \begin{gather*}
f(x,y,z)=\dfrac{(y^2+ax^2+bxz)z^2}{x^{4}},\quad f(x,y,z)=\dfrac{(y^2+ax^2+bxz+cz^2)z}{x^{3}},\\
f(x,y,z)=\dfrac{(y^2+ax^2+bxz)^2z}{x^{5}}.\\
\end{gather*}
\end{enumerate}

\end{theorem}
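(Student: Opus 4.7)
The plan is to observe that Theorem~\ref{teo:reversible} is essentially a bookkeeping consolidation of the four preceding propositions: the genus calculations have already been carried out, and what remains is to convert each admissible pair $(p,q)$ into the explicit first integral written in the statement. So I would organize the proof as a case-by-case translation, following exactly the five items of the theorem.

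First, I would recall the symmetry reduction made at the opening of Section~\ref{sec:reversible}: the automorphism $[x:y:z]\mapsto[z:y:x]$ applied to~\eqref{eq:eqrever} swaps the roles of $x$ and $z$ and sends the exponent $p$ to $-(p+2q)$. Hence, up to an automorphism of $\P^2$, we may always assume $p<0$, and this is the standing hypothesis of each of the four propositions. I would then split the argument according to the sign of $p+2q$. If $p+2q>0$, so that~\eqref{eq:eqrever} takes the form~\eqref{eq:eqrever2}, I further subdivide on whether $a,c$ are both nonzero, only $a$ is nonzero ($c=0$, forcing $b\neq0$), or $a=c=0$ (forcing $b\neq0$), invoking Propositions~\ref{pro:rever:acneq0}, \ref{pro:rever:aneq0c0} and~\ref{pro:rever:a0c0} respectively to extract the admissible pairs or parametric families. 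Then I plug each pair into~\eqref{eq:eqrever2}: for instance, $(p,q)=(-1,2)$ yields $f=(y^2+ax^2+bxz+cz^2)^2/(xz^3)$ and $(p,q)=(-3,2)$ yields $f=(y^2+ax^2+bxz+cz^2)^2/(x^3z)$, giving the two first integrals of item~1; the six pairs from Proposition~\ref{pro:rever:aneq0c0} produce item~2, and the ten parametric families from Proposition~\ref{pro:rever:a0c0} produce item~3 in one-to-one fashion.

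Symmetrically, if $p+2q<0$, then~\eqref{eq:eqrever} takes the form~\eqref{eq:eqrever3}, and Proposition~\ref{pro:rever:case32} supplies the finite lists $(p,q)\in\{(-4,1),(-3,1)\}$ when $c\neq0$ and $(p,q)\in\{(-4,1),(-3,1),(-5,2)\}$ when $c=0$. Substituting these into~\eqref{eq:eqrever3} produces the two first integrals of item~4 and the three first integrals of item~5 respectively.

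The main obstacle is not conceptual but clerical: one must check carefully that each $(p,q)$ pair is translated into the right exponents of $x$ and $z$ (remembering that~\eqref{eq:eqrever} has $z^{p+2q}$ in the denominator and $x^p$ in the numerator, so negative exponents migrate across the bar), and one must verify the one-to-one correspondence between the ten parametric families of item~3 and those produced by Proposition~\ref{pro:rever:a0c0}, in particular paying attention to the boundary behavior of the parameter $u\in\N$ so that the edge cases where some exponent becomes negative are recorded in the correct item. The only statement beyond direct substitution is the converse direction of the ``if and only if'', which is automatic: each of the four propositions was itself an ``if and only if'' on $(p,q)$, so any first integral outside the listed families fails the ellipticity criterion and is excluded.
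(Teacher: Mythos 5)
Your proposal matches the paper's own derivation: the paper proves Theorem~\ref{teo:reversible} precisely by combining Propositions~\ref{pro:rever:acneq0}, \ref{pro:rever:aneq0c0}, \ref{pro:rever:a0c0} and~\ref{pro:rever:case32} after the initial reduction to $p<0$ via $[x:y:z]\mapsto[z:y:x]$, and your substitutions of the admissible pairs $(p,q)$ into~\eqref{eq:eqrever2} and~\eqref{eq:eqrever3} reproduce the listed first integrals correctly. The only point worth noting is the implicit identification of ``the generic fiber $C$ has genus one'' with ``$\F$ is elliptic,'' which the paper also passes over silently.
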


\section{Classification of Lotka-Volterra foliations}\label{sec:lotkavolterra}
In this section, we study folations which have first integrals of the form
\begin{equation}\label{eq:eqLV}
f(x,y,z)=\dfrac{x^{p}y^{q}(ax+by+cz)^r}{z^{p+q+r}}, 
\end{equation} 
where $p,q\in\Z$, $r\in\N$ and $a, b, c\in \R$. Moreover from now on, we will assume that $\gcd(p,q,r)=1$.

Note that $df$ induces a foliation $\F$ on $\P^2$ given by the 1-form
\begin{multline}\label{eq:omega}
\omega =(axyz(p+r)+pyz(by+cz))dx\\+(bxyz(q+r)+qxz(ax+cz))dy\\-((p+q+r)xy(ax+by)+c(p+q)xyz)dz.
\end{multline}
Also note that if $p=0$ or $q=0$ or $p+q+r=0$, then the generic fiber of $f$ has genus zero. So, from now on, we will assume $p\neq 0$, $q\neq 0$ and $p+q+r\neq 0$. By straightforward calculations, we obtain
\[
\sing(\F)=
\left\{
\begin{matrix}
P_1=[0:0:1],&P_2=[0:1:0],&P_3=[1:0:0]\\
P_4=[0:c:-b],&P_5=[-c:0:a],&P_6=[-b:a:0]\\
\multicolumn{3}{c}{P_7=[-bcp:-acq:ab(p+q+r)]}
\end{matrix}
\right\}
\]
When $p>0$, the generic fiber of $f$ takes the form
\[
C\::\: x^py^q(ax+by+cz)^r-z^{p+q+r}=0,
\]
and $\sing(\F)\cap C=\{P_2,P_3,P_6\}$. On the other hand, if $p<0$ and $q>0$,
\[
C\::\: y^q(ax+by+cz)^r-x^{-p}z^{p+q+r}=0,
\]
and $\sing(\F)\cap C=\{P_1,P_3,P_4,P_6\}$.

%



We begin by simplifying the cases that we are going to study. First note that at most one value in $a,b,c$ can be zero. Otherwise, the generic fiber of $f$ has genus zero. Moreover, if either $a=0$ or $b=0$, we can interchange the variables $x$ and $z$, or $y$ and $z$, respectively, to obtain $ab\neq 0$. 

\begin{lemma}\label{lemlem1} In~\eqref{eq:eqLV}, if $ab \neq 0$ then it is enough to consider the following cases:
\begin{enumerate}
\item $p>0$, $q> 0$,
\item $p<0$, $q>0$.
\end{enumerate}
\end{lemma}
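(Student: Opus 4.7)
The first integral $f$ in~\eqref{eq:eqLV} is determined by four distinguished lines of $\P^2$: $L_1=\{x=0\}$, $L_2=\{y=0\}$, $L_3=\{z=0\}$, and $L_4=\{ax+by+cz=0\}$, carrying the exponents $p,\,q,\,-(p+q+r),\,r$ respectively (and summing to zero). The foliation $\F$ induced by $f$ is invariant, up to isomorphism, under two natural operations: the inversion $f\mapsto 1/f$, which simultaneously negates all four exponents, and the pullback $\phi^* f$ by any linear automorphism $\phi$ of $\P^2$, which permutes the four lines and, after relabeling the three of them that are to play the role of coordinate axes, yields a first integral of the form~\eqref{eq:eqLV} with a new exponent vector. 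My plan is to combine these two operations to reduce any configuration with $q<0$ to one of the two listed cases.

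If $p>0$ and $q<0$, the coordinate swap $\phi(x,y,z)=(y,x,z)$ is a valid $\P^2$-automorphism, and $\phi^* f=\tfrac{x^q y^p (bx+ay+cz)^r}{z^{p+q+r}}$ is a first integral in the form~\eqref{eq:eqLV} with new parameters $(p',q',r',a',b',c')=(q,p,r,b,a,c)$. Since $a'b'=ab\neq 0$ and $q'=p>0$, this lands in case~(2). If $p<0$ and $q<0$, the hypothesis $a\neq 0$ permits introducing the linear involution
\[
\phi(x,y,z)=\Bigl(\tfrac{1}{a}(ax+by+cz),\,-y,\,-z\Bigr),
\]
which a direct check confirms swaps $L_1\leftrightarrow L_4$ and fixes $L_2,L_3$ setwise. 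Computing $\phi^* f$ and simplifying (using the key identity $\phi^*(ax+by+cz)=ax$) yields, up to a nonzero multiplicative constant, the expression $\tfrac{x^r y^q (ax+by+cz)^p}{z^{p+q+r}}$; applying inversion then produces the first integral $\tfrac{x^{-r} y^{-q} (ax+by+cz)^{-p}}{z^{-(p+q+r)}}$ of the same foliation, whose new exponents $(p'',q'',r'')=(-r,-q,-p)$ satisfy $p''<0$, $q''>0$, and $r''>0$, again landing in case~(2).

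The main technical obstacle is constructing and verifying the involution $\phi$ used in the hardest subcase: one must check that it is a well-defined linear automorphism of $\P^2$ (which requires exactly $a\neq 0$, guaranteed by the hypothesis $ab\neq 0$), that $\phi(L_1)=L_4$ and $\phi(L_4)=L_1$, and that after the pullback followed by inversion one still obtains a first integral of the normalized form~\eqref{eq:eqLV} with $r''\in\N$ and $a''b''\neq 0$. The book-keeping of signs and constants in this step is routine but requires care, especially in confirming that the normalization $\gcd(p'',q'',r'')=1$ is preserved by the chain of operations.
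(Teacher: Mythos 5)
Your argument is correct and follows essentially the same route as the paper: both reduce the remaining sign patterns of $(p,q)$ by combining the inversion $f\mapsto 1/f$ with linear automorphisms of $\P^2$ that permute the four distinguished lines, the only cosmetic difference being that you swap $\{x=0\}$ with $\{ax+by+cz=0\}$ (using $a\neq 0$) whereas the paper swaps $\{y=0\}$ with that line (using $b\neq 0$). The verifications you flag (invertibility of $\phi$, the identity $\phi^*(ax+by+cz)=ax$, preservation of $r\in\N$, of $ab\neq 0$, and of $\gcd(p,q,r)=1$) all check out.
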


\begin{proof}
%
%
Assume that $p<0$ and $q<0$, then  $\dfrac{1}{f}=z^{p+q+r}x^{-p}y^{-q}(ax+by+cz)^{-r}$ is also a first integral. Hence, applying the automorphism on $\P^2$:
\[
[x:y:z] \mapsto [x:ax+by+cz:z],
\]
we obtain $p>0$ and $q<0$. Furthermore, we can further reduce this case, using  $[x:y:z] \mapsto [y:x:z]$, to the case $p<0$ and $q>0$. Note that both these transformations preserve the hypotheses $ab\neq 0$.
\end{proof}

\begin{remark}\label{rem:c0}
In~\eqref{eq:eqLV}, assume $ab\neq 0$. The case $p<0$, $q>0$ and $p+q+r<0$ is reducible to $p>0$ and $q>0$ by using the automorphism $[x:y:z] \mapsto [z:y:x]$. Note that this automorphism preserve the value of $b$, however, it preserves the fact that $a\neq 0$ if, and only if, $c\neq 0$.
\end{remark}

In view of Lemma~\ref{lemlem1} and Remark~\ref{rem:c0}, 
we have the following proposition.
\begin{proposition}\label{pro:newproLV}
Given a first integral of the form $f(x,y,z)=\dfrac{x^{p}y^{q}(ax+by+cz)^r}{z^{p+q+r}}$, we can assume, without loss of generality, that one of the following conditions hold:
\begin{enumerate}\renewcommand{\labelenumi}{\Roman{enumi}.}
 \item $ab\neq 0$, $c\in\R$ and $p>0$, $q>0$;
 \item $abc\neq 0$ and $p<0$, $q>0$, $p+q+r>0$;
 \item $a=0$, $bc\neq 0$ and $p>0$, $q>0$.
\end{enumerate}
\end{proposition}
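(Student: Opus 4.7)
My plan is to combine the three preliminary reductions already in place --- the genericity conditions established just after equation \eqref{eq:omega}, Lemma~\ref{lemlem1}, and Remark~\ref{rem:c0} --- with the permutation symmetry of the three coordinate lines of $\P^2$, in order to normalize any first integral of the form \eqref{eq:eqLV} into one of Cases~I,~II, or~III.

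First I observe that $p$, $q$, and $p+q+r$ must all be nonzero (else the generic fiber of $f$ has genus zero), and that at most one of $a, b, c$ can vanish (else the linear factor $ax+by+cz$ collapses to a monomial and the fibers are again rational). I therefore split the proof into the two exhaustive cases $abc\neq 0$ and ``exactly one of $a, b, c$ vanishes''. In the case $abc\neq 0$, Lemma~\ref{lemlem1} reduces $(p, q)$ to either $(+,+)$, which yields Case~I with $c\in\R\setminus\{0\}$, or $(-,+)$; in the latter, if $p+q+r>0$ we obtain Case~II, while if $p+q+r<0$ then Remark~\ref{rem:c0} combined with the swap $[x:y:z]\mapsto[z:y:x]$ restores $(p>0,q>0)$ while preserving $abc\neq 0$, placing us in Case~I.

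In the remaining case, where exactly one of $a, b, c$ vanishes, I first use the automorphisms $[x:y:z]\mapsto[z:y:x]$ and $[x:y:z]\mapsto[y:x:z]$ to permute coordinate lines and place the vanishing coefficient at $c$, so that $ab\neq 0$ and $c=0$. Lemma~\ref{lemlem1} then again yields $(p>0,q>0)$, which is Case~I with $c=0$, or $(p<0,q>0)$. In the latter sub-subcase, when $p+q+r<0$ Remark~\ref{rem:c0} applies: the swap $[x:y:z]\mapsto[z:y:x]$ --- which, as the remark explicitly notes, interchanges the vanishing of $a$ and $c$ --- simultaneously reduces the sign pattern to $(p>0,q>0)$, thereby landing us in Case~III with $a=0$ and $bc\neq 0$.

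The main obstacle I expect is the combinatorial bookkeeping of how the six parameters $(p, q, r, a, b, c)$ transform under each composition of automorphisms and of the inversion $f\mapsto 1/f$, and verifying that every starting sign configuration is routed into one of the three canonical cases. Once this accounting is organized carefully, each individual reduction is an immediate consequence of Lemma~\ref{lemlem1} or Remark~\ref{rem:c0}, so the work is entirely combinatorial rather than geometric.
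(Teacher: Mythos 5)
Your handling of the case $abc\neq 0$, and of the branches that land in Cases I and II, agrees with the paper's own proof. But there is a genuine gap in your last branch. After normalizing to $ab\neq 0$, $c=0$ and applying Lemma~\ref{lemlem1}, the subcase $p<0$, $q>0$ splits according to the sign of $p+q+r$; you treat only $p+q+r<0$ (via Remark~\ref{rem:c0}) and never say where the configuration $c=0$, $p<0$, $q>0$, $p+q+r>0$ is routed. This configuration cannot be reduced to Cases I--III by the tools you list: when $c=0$ the three lines $x=0$, $y=0$, $ax+by=0$ are concurrent at $[0:0:1]$ while $z=0$ is not, every automorphism of $\P^2$ preserves this incidence pattern, and comparing the signs of the exponents carried by the concurrent versus the non-concurrent line (also after the inversion $f\mapsto 1/f$, which negates all of $p,q,r$ simultaneously) shows that none of the three target sign patterns can be reached. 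The paper closes exactly this case with the quadratic birational map $[x:y:z]\mapsto[xz:yx:z^2]$ of~\eqref{eq:mapbir}, which is not an automorphism but preserves the genus of the generic fiber and carries the first integral to one of type III; this extra step is also the source of Remark~\ref{rem:LV} and of the additional ($\dagger$-marked) first integrals in Theorem~\ref{teo:lotkavolterra}. Without this (or some other genus-preserving transformation) your case analysis is not exhaustive.
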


\begin{proof}
We already observed that, without loss of generality, we may assume that $ab\neq 0$. Hence, by Lemma~\ref{lemlem1}, either $p>0$ and $q>0$ (thus having case {\it I}) or $p<0$ and $q>0$. In the latter case, we now consider, separately, the following cases:
\begin{enumerate}
 \item if $c\neq 0$ and $p+q+r>0$, we are in case {\it II};
 \item if $c\neq 0$ and $p+q+r<0$, by Remark~\ref{rem:c0}, we can reduce this case back to case {\it I};
 \item if $c=0$ and $p+q+r<0$, again, by Remark~\ref{rem:c0}, we can reduce this case to case {\it III};
 \item and finally, if $c=0$ and $p+q+r>0$, we consider the birrational map 
 \[
 [x:y:z]\mapsto [xz:yx:z^2],
 \] 
thus obtaining
 \begin{equation}\label{eq:mapbir}
 C\::\: x^py^q(ax+by)^r-z^{p+q+r},\qquad C'\::\: x^{p+q+r}y^q(by+az)^r-z^{p+2q+2r}.
 \end{equation}
Note that $C$ and $C'$ have the same genus, so we obtain case {\it III}.\qedhere
\end{enumerate}
\end{proof}

\begin{remark}\label{rem:LV}
Note that equation~\eqref{eq:mapbir} allows us to obtain first integrals which meet $c=0$, $p<0$, $q>0$ and $p+q+r>0$ (item 4, in the proof above) from first integrals obtained after analyzing case III.  More precisely, if we obtain $f(x,y,z)=\dfrac{x^{p}y^{q}(by+cz)^{r}}{z^{p+q+r}}$ as a first integral associated to an elliptic foliation in case III, with $p>0$ and $q>0$, then
\[
f(x,y,z)=\dfrac{x^{p-q-r}y^{q}(cx+by)^{r}}{z^{p}}
\]
is also a first integral associated to an elliptic foliation, which will satisfy the conditions of item 4, above, whenever $p<q+r$.
\end{remark}


%
%
In view of the previous proposition, it is enough to study, separately, the following cases:
\begin{description}
\item[Case I: $ab\neq 0$, $c\in\R$ and $p>0$, $q>0$:] Section~\ref{sec:LV:an0cn0}. 
\item[Case II: $abc\neq 0$ and $p<0$, $q>0$, $p+q+r>0$:] Section~\ref{sec:abcneq0}. 
\item[Case III: $a=0$, $bc\neq 0$ and $p>0$, $q>0$:] Section~\ref{sec:LV:an0c=0}
\end{description}

\subsection{Case I: $ab\neq 0$, $c\in\R$ and $p>0$, $q>0$}\label{sec:LV:an0cn0} 



\begin{proposition}\label{prop8.5}
Assume $p>0$ and $q>0$ and let
\[
C\::\: x^py^q(ax+by+cz)^r-z^{p+q+r}=0
\]
be the generic fiber of~\eqref{eq:eqLV}.
Then $C$ is an elliptic curve if, and only if 
\begin{equation}\label{eq:LV:caseI}
p+q+r=\gcd(q,p+r)+\gcd(p,q+r)+\gcd(r,p+q). 
\end{equation}
\end{proposition}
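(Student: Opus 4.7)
The plan is to apply Theorem~\ref{cerveaulins} directly to the generic fiber $C$, exactly mirroring the strategy used in Proposition~\ref{pro:rever:acneq0}. First I would record the two global ingredients: the form $\omega$ in~\eqref{eq:omega} has homogeneous coefficients of degree $3$ with no common factor, so $\deg(\F)=2$, and $C\::\: x^py^q(ax+by+cz)^r-z^{p+q+r}=0$ has degree $m=p+q+r$. Thus Theorem~\ref{cerveaulins} reads
\[
2-2\gen(C) = \sum_{P\in \sing(\F)\cap C}\sum_{B\in C\{P\}} i_P(\F,B) - (p+q+r),
\]
so the conclusion is equivalent to showing that the total sum of multiplicities on the right equals $\gcd(q,p+r)+\gcd(p,q+r)+\gcd(r,p+q)$. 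Since $\sing(\F)\cap C=\{P_2,P_3,P_6\}$, the whole problem reduces to computing the contribution of each of these three points.

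Next, I would analyze each singularity separately. At $P_2=[0:1:0]$, work in the chart $y=1$; then $(ax+b+cz)^r$ is a unit near the origin and the first integral is, up to multiplication by a unit, $u^{p}v^{-(p+q+r)}$ where $u=x$, $v=z$. The eigenvalue ratio of the linearization of $\omega$ at $P_2$ is therefore $p:-(p+q+r)$, and exactly as in the Poincaré/Dulac argument of Proposition~\ref{pro:rever:acneq0} (the $\eps=0$ in the Dulac case forced again by the existence of a first integral), one can assume
\[
\omega = s_0\,v\,du - p_0\,u\,dv,\qquad C\::\: u^{p_0}-v^{s_0}=0,
\]
where $d=\gcd(p,p+q+r)=\gcd(p,q+r)$, $p=dp_0$, $p+q+r=ds_0$, so $\gcd(p_0,s_0)=1$. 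The curve $u^{p_0}-v^{s_0}=0$ decomposes into $d$ branches, one for each $d$-th root of unity, and each branch admits the Puiseux parametrization $\alpha(t)=(\zeta t^{s_0},t^{p_0})$. A direct computation analogous to the one in Proposition~\ref{pro:rever:acneq0} gives $X_1(t)=t$ for the pull-back field, hence $i_{P_2}(\F,B)=1$ on every branch. The total contribution of $P_2$ is therefore $\gcd(p,q+r)$. An identical argument at $P_3=[1:0:0]$ in the chart $x=1$ (where $(a+by+cz)^r$ is a unit) yields $\gcd(q,p+q+r)=\gcd(q,p+r)$ branches, each of multiplicity one. For $P_6=[-b:a:0]$, I would perform the linear change of coordinates that sends $ax+by$ to a new variable $w$ (and $z$ remains the ``second'' local axis): in the resulting chart, $x$ and $y$ become units and the first integral behaves like $w^{r}z^{-(p+q+r)}$, so the same Poincaré/Dulac analysis produces $\gcd(r,p+q+r)=\gcd(r,p+q)$ branches, each contributing $1$.

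Finally, I would assemble the three contributions into Theorem~\ref{cerveaulins}:
\[
2-2\gen(C) = \gcd(p,q+r)+\gcd(q,p+r)+\gcd(r,p+q) - (p+q+r),
\]
so $\gen(C)=1$ if and only if \eqref{eq:LV:caseI} holds, which is the desired equivalence.

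The main obstacle is bookkeeping rather than new ideas: one must verify carefully that at each of $P_2,P_3,P_6$ the foliation is genuinely a generic saddle (eigenvalue ratio in the allowed Poincaré/Dulac regime, so the normal form theorem is applicable) and that the local equation of $C$ really is the ``principal part'' $u^{p_0}-v^{s_0}$ up to unit, so that the branch count is the gcd and not something smaller. Because the normal-form step and the Puiseux computation are formally identical to those already carried out in Proposition~\ref{pro:rever:acneq0}, I would state them in abbreviated form and concentrate the written argument on the change of coordinates at $P_6$, which is the one point not lying on a coordinate axis.
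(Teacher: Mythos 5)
Your proposal is correct and follows essentially the same route as the paper's proof: apply Theorem~\ref{cerveaulins} to $C$ of degree $p+q+r$ with $\deg\F=2$, identify $\sing(\F)\cap C=\{P_2,P_3,P_6\}$, and at each point use the eigenvalue ratio together with the Poincar\'e/Dulac normal form (with $\eps=0$ forced by the first integral) to get $\gcd(p,q+r)$, $\gcd(q,p+r)$ and $\gcd(r,p+q)$ branches respectively, each of multiplicity one. The only cosmetic difference is that you spell out the local charts and the unit factors explicitly, whereas the paper simply records the eigenvalue pairs and refers back to the argument of Proposition~\ref{pro:rever:acneq0}.
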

\begin{proof}
Let $\F$ be the foliation induced by $df$, where $f$ is as in~\eqref{eq:eqLV}.
Since $p>0$ and $abc\neq 0$, 
\[
S_C=\sing(\F)\cap C=\{P_2,P_3,P_6\}, 
\]
where $P_2=[0:1:0]$, $P_3=[1:0:0]$ and $P_6=[-b:a:0]$. Moreover, $\deg(C)=p+q+r$ so, by Theorem~\ref{cerveaulins},
\begin{equation}\label{eqgen1lv}
 2-2\gen(C)=\sum_{P\in {S_C}}i(\F,B_{P})-(p+q+r)(2-1),
\end{equation}
where $B_{P}$ are the local branches of $C$ at $P$.

We now calculate $\ds\sum i(\F,B_{P_3})$, following the arguments in the proof of Proposition~\ref{pro:rever:acneq0}. In this case, the eigenvalues associated to $P_3$ are $aq$ and $a(p+q+r)$, so there exists $m=\gcd(q,p+q+r)=\gcd(q,p+r)$ local branches $B_j$ of $C$ passing through $P_3$ such that $i(\F,B_j)=1$, for each $j=1,\ldots, m$.

Analogously, the eigenvalues associated to $P_2$ are $bp$ and $b(p+q+r)$ so there exist $n=\gcd(p,p+q+r)=\gcd(p,q+r)$ local branches $B_j^{'}$ of $P_2$ such that $i(\F,B^{'}_j)=1$; and in the same way, the eigenvalues associated to $P_6$ are $b(p+q+r)$ and $br$, and there exists $l=\gcd(r,p+q+r)=\gcd(r,p+q)$ local branches $B^{''}_j$ of $P_6$ such that $i(\F,B^{''}_{j})=1$. Hence 
\[
\sum_{P\in S_C}i(\F,B_{P})=m+n+l,
\]
and, replacing the last equality in~\eqref{eqgen1lv}, we obtain
\begin{equation}\label{eqgen2}
 2-2\gen(C)=m+n+l-(p+q+r).
\end{equation}
Therefore $C$ is an elliptic curve if, and only if,
\[
p+q+r=m+n+l,
\]
which is precisely~\eqref{eq:LV:caseI}.
\end{proof}

Now we solve equation~\eqref{eq:LV:caseI}. For the sake of clarity, the proof of the following lemma, and also the proofs of similar lemmas in the subsequent sections, will be in Appendix~\ref{app:lemma}.

\begin{lemma}\label{lem:caseI}
The 3-tuple $(p,q,r)\in\N^3$, with $\gcd(p,q,r)=1$, is a solution of equation~\eqref{eq:LV:caseI} if, and only if,
\[
(p,q,r)\in\{(1,1,1),(1,1,2),(1,2,3)\}.
\]
\end{lemma}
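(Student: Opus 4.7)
My approach rests on the elementary observation that each summand on the right-hand side of~\eqref{eq:LV:caseI} is bounded by one of the variables on the left: since $\gcd(q,p+r)$ divides $q$, we have $\gcd(q,p+r)\le q$, and similarly $\gcd(p,q+r)\le p$ and $\gcd(r,p+q)\le r$. Summing these three inequalities gives
\[
\gcd(q,p+r)+\gcd(p,q+r)+\gcd(r,p+q)\;\le\; p+q+r,
\]
with equality if and only if $q\mid p+r$, $p\mid q+r$, and $r\mid p+q$ hold simultaneously. Hence~\eqref{eq:LV:caseI} is equivalent to this system of mutual divisibilities, combined with $\gcd(p,q,r)=1$.

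The second step is to solve the divisibility system. Since the system is symmetric in $(p,q,r)$, I would assume without loss of generality that $p\le q\le r$. Then $p+q\le 2r$ together with $r\mid p+q$ leaves only the two possibilities $p+q=r$ and $p+q=2r$. If $p+q=2r$, then $p,q\le r$ forces $p=q=r$, and coprimality gives $(1,1,1)$. If instead $p+q=r$, then $q\mid p+r=2p+q$ reduces to $q\mid 2p$; since $p\le q$, this leaves only $q=p$ or $q=2p$. The first subcase gives $r=2p$, the remaining condition $p\mid q+r=3p$ is automatic, and coprimality yields $(1,1,2)$. The second gives $r=3p$, the condition $p\mid q+r=5p$ is again automatic, and coprimality yields $(1,2,3)$. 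I would then close the argument by verifying directly that each of the three candidate tuples does satisfy~\eqref{eq:LV:caseI}.

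I do not anticipate any real obstacle: the argument is a short, elementary divisibility analysis, and the enumeration of subcases is very small. The one point that deserves care is reconciling the lemma's list with the symmetry of the equation, which is invariant under permutations of $(p,q,r)$; the three tuples should be understood as the representatives with non-decreasing entries, consistent with the paper's practice of using automorphisms of $\P^2$ to permute the roles of $x,y,z$.
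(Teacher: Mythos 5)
Your proof is correct. The first step coincides exactly with the paper's: both arguments observe that $\gcd(q,p+r)\le q$, $\gcd(p,q+r)\le p$, $\gcd(r,p+q)\le r$, so that equality in~\eqref{eq:LV:caseI} forces the three divisibilities $q\mid p+r$, $p\mid q+r$, $r\mid p+q$. Where you diverge is in solving that system. The paper introduces $\alpha,\beta,\gamma$ with $p+r=\alpha q$, $q+r=\beta p$, $p+q=\gamma r$, eliminates $p,q,r$ to reach the Diophantine equation $\alpha\beta\gamma=2+\alpha+\beta+\gamma$ with $\beta\ge\alpha\ge\gamma$, and then runs a case analysis on $(\alpha,\beta,\gamma)$ before translating back. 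You instead order $p\le q\le r$ and exploit the single constraint $r\mid p+q\le 2r$ to force $p+q\in\{r,2r\}$, after which $q\mid 2p$ finishes the enumeration in two short subcases. Your route is shorter and avoids the elimination step entirely, at the cost of not producing the auxiliary equation in $(\alpha,\beta,\gamma)$ (which the paper does not reuse elsewhere, so nothing is lost). Your closing remark about the three tuples being representatives under permutation, matching the paper's implicit normalization $p\le q\le r$ and its use of automorphisms of $\P^2$, is the right way to reconcile the symmetric equation with the asymmetric-looking list in the lemma.
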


\subsection{Case II: $abc\neq 0$ and $p<0$, $q>0$, $p+q+r>0$}\label{sec:abcneq0}

\begin{proposition}\label{prop8.6}
Assume $p<0$, $q>0$ and $p+q+r>0$, and let 
\[
C\::\:y^q(ax+by+cz)^r-x^{-p}z^{p+q+r}=0
\]
be the generic fiber of~\eqref{eq:eqLV}. 
Then $C$ is an elliptic curve if, and only if, 
\begin{equation}\label{eq:main}
q+r=\gcd(-p,q)+\gcd(-p,r)+\gcd(q,p+q+r)+\gcd(r,p+q+r).
\end{equation}
\end{proposition}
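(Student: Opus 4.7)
The plan is to mirror the strategy used for Proposition~\ref{prop8.5}. Since $\deg(C) = q+r$ and $\deg(\F) = 2$, Theorem~\ref{cerveaulins} yields
\begin{equation*}
2 - 2\gen(C) \;=\; \sum_{P \in S_C} \sum_{B \in C\{P\}} i_P(\F,B) \;-\; (q+r),
\end{equation*}
so $C$ is elliptic precisely when the right-hand sum equals $q+r$. Since $\sing(\F)\cap C = \{P_1,P_3,P_4,P_6\}$, it suffices to evaluate the multiplicity contribution at each of these four points.

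At each singularity I would follow the recipe used in Propositions~\ref{pro:rever:acneq0} and~\ref{prop8.5}: pass to an affine chart (translating when necessary so that $P_i$ is at the origin), linearise $\omega$, and apply either Poincar\'e's normal form theorem (when the ratio of eigenvalues lies outside $\N\cup\tfrac{1}{\N}$) or Dulac's theorem (with $\eps=0$, thanks to the rational first integral $f$). This brings $\omega$ to the canonical shape $\lambda v\,du + \mu u\, dv$; in the same coordinates, the factor of $C$ that does not produce the singularity becomes a local unit, so $C$ splits into exactly $m_{P_i} := \gcd(|\lambda|,|\mu|)$ smooth branches, each contributing $i_{P_i}(\F,B)=1$ by the Puiseux parametrisation argument of Proposition~\ref{pro:rever:acneq0}.

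Explicit computation of the linear parts of~\eqref{eq:omega} yields eigenvalues proportional to $\{p,q\}$ at $P_1$, $\{q,p+q+r\}$ at $P_3$, $\{-p,r\}$ at $P_4$, and $\{r,p+q+r\}$ at $P_6$. The required unit factors come from $(ax+by+cz)^r$ being non-zero at $P_1$ and $P_3$ (using $c\ne 0$ and $a\ne 0$ respectively), and from $y^q$ (resp.\ $x^p$) being non-zero at $P_4$ (resp.\ $P_6$). At $P_4 = [0:c:-b]$ one works in the chart $z=1$, translates $\tilde y = y+c/b$, and introduces the linear change of coordinate $u = ax + b\tilde y$ (legitimate since $b\ne 0$), bringing $C$ to the shape $u^r = x^{-p}\cdot(\text{unit})$; the analogous step at $P_6 = [-b:a:0]$ uses the chart $y=1$, $\tilde x = x+b/a$, and $u = a\tilde x + cz$ (legitimate since $a\ne 0$), giving $u^r = z^{p+q+r}\cdot(\text{unit})$. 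This produces the branch counts
$m_{P_1}=\gcd(q,-p)$, $m_{P_3}=\gcd(q,p+q+r)$, $m_{P_4}=\gcd(r,-p)$, $m_{P_6}=\gcd(r,p+q+r)$.

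Summing these four contributions and equating the total with $q+r$ reproduces~\eqref{eq:main}. The main technical hurdle is the analysis at $P_4$, which is the first singularity in this paper that lies on none of the coordinate lines, so that both a translation and a genuine linear change of coordinate are required in order to put the local picture into the form handled by the Poincar\'e/Dulac dichotomy. This is also where the full hypothesis $abc \ne 0$ is essential: $c\ne 0$ both separates $P_4$ from $P_1$ and keeps $y^q$ a local unit at $P_4$; $b\ne 0$ lets $u = ax+b\tilde y$ serve as a local coordinate; and $a\ne 0$ plays the analogous role at $P_6$. The remaining computations at $P_1$ and $P_3$ are routine extensions of the arguments in Proposition~\ref{prop8.5}.
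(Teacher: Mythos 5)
Your proposal is correct and follows essentially the same route as the paper: apply Theorem~\ref{cerveaulins} with $\deg(C)=q+r$, compute the eigenvalue ratios at $P_1,P_3,P_4,P_6$ (proportional to $\{q,-p\}$, $\{q,p+q+r\}$, $\{r,-p\}$, $\{r,p+q+r\}$ respectively), and count $\gcd$-many branches of multiplicity one at each, exactly as in the paper's reduction to Proposition~\ref{prop8.5}. Your explicit treatment of the translations and linear coordinate changes at $P_4$ and $P_6$ supplies detail the paper leaves implicit, but the argument is the same.
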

\begin{proof}
The proof is analogous to the proof of Proposition~\ref{prop8.5}. Let $\F$ be the foliation induced by $df$, where $f$ is  as in~\eqref{eq:eqLV}. Since $p<0$ and $abc\neq 0$,
\[
S_C=\sing(\F)\cap C=\{P_1,P_3,P_4,P_6\},
\]
where $P_1=[0:0:1]$, $P_3=[1:0:0]$, $P_4=[0:c:-b]$ and $P_6=[-b:a:0]$. Also, $\deg(C)=q+r$ so, by Theorem~\ref{cerveaulins},
\[
2-2\gen(C)=\sum_{P\in S_C}i(\F,B_P)-(q+r)(2-1),
\]
where $B_P$ are the local branches of $C$ at $P$. Calculating $i(\F,B_P)$ in the same way as in the proof of Proposition~\ref{prop8.5}, we obtain that the eigenvalues associated to $P_1$, $P_3$, $P_4$ and $P_6$ are, respectively, $\{cq,-cp\}$, $\{aq,a(p+q+r)\}$, $\{br,-bp\}$ and $\{ar,a(p+q+r)\}$. Moreover, for $P_1$ (respectively for $P_3$, $P_4$ and $P_6$) there exists $m=\gcd(-p,q)$ local branches with multiplicity one (respectively, $k=\gcd(q,p+q+r)$, $n=\gcd(-p,r)$ and $l=\gcd(r,p+q+r)$).

Thus, replacing these numbers in~\eqref{eqgen1lv}, we have
\[
 2-2\gen(C)=m+n+l+k-(q+r).
\]
Therefore $C$ is an elliptic curve if, and only if,
\[
q+r=m+n+l+k,
\]
that is, equation~\eqref{eq:main}.
\end{proof}

%
%

%

The following Lemma contains the solutions of equation~\eqref{eq:main}. As before, its proof is available in Appendix~\ref{app:lemma}.
\begin{lemma}\label{lem:main-new}
The 3-tuple $(-p,q,r)\in\N^3$ is a solution of equation~\eqref{eq:main} if, and only if, it belongs to the following table
\[
\begin{array}{ccc||ccc||ccc||ccc}
-p&q&r&-p&q&r&-p&q&r&-p&q&r\\
\hline\hline
2&1&3&2&3&1&1&2&2&3&2&2\\
2&1&4&2&4&1&1&2&3&1&3&2\\
3&1&4&3&4&1&4&2&3&4&3&2\\
3&1&6&3&6&1&1&3&4&1&4&3\\
4&1&6&4&6&1&6&3&4&6&4&3
\end{array}
\]
provided that $(-p,q,r)=1$.
\end{lemma}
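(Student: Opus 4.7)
The plan is to set $s = -p > 0$ and rewrite the target equation as
\[
q + r = \gcd(s,q) + \gcd(s,r) + \gcd(q,q+r-s) + \gcd(r,q+r-s),
\]
and then exploit the symmetry $q \leftrightarrow r$, which swaps $\gcd(s,q)\leftrightarrow\gcd(s,r)$ and the last two terms with each other while preserving the coprimality $\gcd(s,q,r)=1$. Thus I may assume $q \le r$ and double-count the solution list afterwards, which matches the left/right mirror pairs in the stated table. Two useful simplifications of the last two gcds are $\gcd(q,q+r-s)=\gcd(q,r-s)$ and $\gcd(r,q+r-s)=\gcd(r,q-s)$, so every term on the right divides $q$ or divides $r$, giving the rough bound $q+r\le 2q+2r$.

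To sharpen this, I would organize the casework by which of $q,r$ divide $s$. First observe that if both $q\mid s$ and $r\mid s$, then $\gcd(s,q,r)=\gcd(q,r)=1$, and the two ``$q$-gcds'' sum to $q+\gcd(q,r)$ while the two ``$r$-gcds'' sum to $r+\gcd(q,r)$, forcing $2\gcd(q,r)=0$, a contradiction. So at most one of $q,r$ divides $s$. I would then run three cases: \textbf{(A)} $q=1$, where the equation collapses to a two-variable Diophantine equation $\gcd(s,r)+\gcd(r,s-1)=r-1$ solvable by writing $r=\gcd(s,r)\cdot u=\gcd(r,s-1)\cdot v$ and bounding $u,v$; this recovers the rows with $q=1$ or (after swapping) $r=1$. \textbf{(B)} $q\mid s$ (with $q\ge 2$), where $\gcd(s,q)=q$ and $\gcd(q,r-s)=\gcd(q,r)$, reducing the equation to $r=\gcd(s,r)+\gcd(q,r)+\gcd(r,q-s)$; once again every summand is a proper divisor of $r$, so enumerating divisor decompositions of $r$ subject to $\gcd(s,q,r)=1$ produces the remaining families such as $(s,q,r)=(3,2,2),(4,2,3),(6,3,4)$ and their swaps. \textbf{(C)} $q\nmid s$ and $r\nmid s$, so each of the four gcds is at most $q/2$ or $r/2$, forcing $q+r\le q+r$ with near equality; this bounds $q,r$ to small values, which can be checked by hand to yield $(1,2,2),(1,2,3),(1,3,4)$ and their swaps.

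Finally, in each recovered tuple I would verify the remaining hypotheses needed to land in Proposition~\ref{prop8.6}: namely $\gcd(-p,q,r)=1$ and $p+q+r>0$, i.e. $q+r>s$. The tuples that survive both checks are exactly the twenty entries in the table. The main obstacle is the bookkeeping of the case analysis: four gcd summands interacting through two divisibility hypotheses ($q\mid s$, $r\mid s$) create enough subcases that one must organize them carefully to avoid double counting and to make sure no large-value tuple escapes the bounds. The individual subcases, however, reduce to elementary Diophantine equations where each summand is a proper divisor of $q$ or $r$, and such equations are routinely solved by writing the cofactor equation $u_1^{-1}+u_2^{-1}+\cdots=1$-type constraints and enumerating the finitely many possibilities.
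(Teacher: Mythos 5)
Your overall strategy (normalize $s=-p$, use the $q\leftrightarrow r$ symmetry, and split into cases according to which of $q,r$ divides $s$) is a legitimately different organization from the paper's, which instead orders the cases by the position of $s$ relative to $q\le r$ and exploits the involution $p\mapsto q+r-p$ that swaps $\gcd(s,q)\leftrightarrow\gcd(q,q+r-s)$ and $\gcd(s,r)\leftrightarrow\gcd(r,q+r-s)$; that involution is what lets the paper dispose of the whole regime $q\le r<s$ by reflecting it into $s<q\le r$. Your cases (A) and (B) are sound in spirit: (B) reduces to three divisors of $r$ summing to $r$, i.e.\ the unit-fraction equation $u_1^{-1}+u_2^{-1}+u_3^{-1}=1$, exactly as in the paper's proof of Lemma~\ref{lem:caseIII}. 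However, there are genuine gaps. First, in case (C) the claim that \emph{all four} gcds are at most $q/2$ or $r/2$ is false: $q\nmid s$ does not prevent $\gcd(q,q+r-s)=\gcd(q,r-s)$ from equalling $q$ (take $q=2$, $s=3$, $r=5$), and in any event the conclusion ``$q+r\le q+r$ with near equality'' carries no information, so this case does not actually get bounded. Second, and more seriously, the side condition $q+r>s$ (i.e.\ $p+q+r>0$, which is a standing hypothesis of Proposition~\ref{prop8.6}) cannot be deferred to a final filter: without it the equation has \emph{infinite} families of solutions, e.g.\ $(s,2,2)$ for every odd $s$ and $(s,1,3)$ for every $s\equiv 2\pmod 3$, all with $\gcd=1$. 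So your cases (A) and (C) do not produce a finite list to filter; the inequality $s<q+r$ must be injected into the bounding arguments themselves (it is what turns ``$d\mid r$, $d\mid s$'' into actual upper bounds on $s$). The paper uses it implicitly in the reduction $p'=q+r-p\ge 1$.

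Two smaller points. Your trichotomy under the normalization $q\le r$ omits the subcase $q\nmid s$, $r\mid s$, $q\ge 2$; you have already spent the $q\leftrightarrow r$ swap on ordering, so you cannot use it again to fold this into (B). (It is in fact vacuous, since $r\mid s$ together with $s<q+r\le 2r$ forces $s=r$, which is impossible because then the right-hand side exceeds $q+r$ by $2\gcd(s,q)$ --- but this needs to be said.) Finally, some of your illustrative assignments are off: $(3,2,2)$ has $2\nmid 3$ and so belongs to case (C), not (B), and your case-(C) list omits it. None of these defects is fatal to the strategy, but as written the proof does not close: case (C) has no working bound, and the enumeration is not finite until $s<q+r$ is used inside the argument.
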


Observe that the automorphism $[x:y:z]\mapsto [x:ax+by+cz:z]$ allows us to interchange the values of $q$ and $r$ in~\eqref{eq:eqLV}. In the same way, the automorphism $[x:y:z]\mapsto [z:y:x]$ allows us to interchange the values of $-p$ and $p+q+r$. Therefore, from the above table we will only consider the solutions
\[
(-p,q,r)\in \{(2,1,3),(1,2,2),(2,1,4),(1,2,3),(3,1,6),(1,3,4)\}
\]

We must remark that in~\cite[p. 3554]{Gau1}, due to a small overlook while resolving equation~\eqref{eq:main}, the author could not find the full solution set showed in the table in Lemma~\ref{lem:main-new}.

\subsection{Case III: $a=0$, $bc \neq 0$ and $p>0$, $q>0$}\label{sec:LV:an0c=0}
In this case, we have a first integral of the form
\begin{equation}\label{eq:eqLVa0}
f(x,y,z)=\dfrac{x^{p}y^{q}(by+cz)^r}{z^{p+q+r}}.
\end{equation} 



\begin{proposition}\label{propvolterra2}
Assume $p>0$ and $q>0$, and let 
\[
C\::\:x^py^q(by+cz)^r-z^{p+q+r}=0
\]
be the generic fiber of~\eqref{eq:eqLVa0}. Then $C$ is an elliptic curve if, and only if, 
\begin{equation}\label{eq:LV:caseIII}
p=\gcd(p,r)+\gcd(p,q)+\gcd(p,q+r).
\end{equation}
\end{proposition}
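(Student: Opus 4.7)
The plan follows the template of Propositions~\ref{prop8.5} and~\ref{prop8.6}: apply Theorem~\ref{cerveaulins} to the generic fiber $C$ of~\eqref{eq:eqLVa0} and compute $\sum_B i(\F,B_P)$ at each $P\in S_C=\sing(\F)\cap C$. With $a=0$ the singularities $P_5, P_6, P_7$ all collapse onto $P_3=[1:0:0]$, and since $b\ne 0$ a direct check shows $P_1, P_4\notin C$, so $S_C=\{P_2, P_3\}$. Since $\deg(C)=p+q+r$ and $\F$ still has degree $2$, Theorem~\ref{cerveaulins} gives
\[
2-2\gen(C)=\sum_{P\in S_C}\sum_{B}i(\F,B_P)-(p+q+r).
\]
At $P_2$ the situation is identical to Case~I: in the chart $y=1$ the linearization of $\omega$ has eigenvalues $bp$ and $b(p+q+r)$, so exactly as in the proof of Proposition~\ref{prop8.5}, the Poincar\'e/Dulac normal form yields $\gcd(p,p+q+r)=\gcd(p,q+r)$ branches of $C$ through $P_2$, each with $i(\F,B)=1$.

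The essential new difficulty is at $P_3$, where the three invariant lines $y=0$, $z=0$, $by+cz=0$ all meet and both coefficients of $\omega|_{x=1}$ vanish to order two; thus $P_3$ is a nilpotent singularity with $\nu_{P_3}(\F)=2$, and the eigenvalue argument no longer applies. My approach is to blow up $P_3$ by a map $\pi$; a computation in both standard charts shows $\pi$ is non-dicritical and that $\pi^*\F$ has three singularities on the exceptional divisor, one per separatrix direction: $Q_1$ along $z=0$, $Q_2$ along $y=0$, and $Q_3$ along $by+cz=0$. Writing out the strict transform $C_1$ one verifies that $Q_1\notin C_1$ (since $b\ne 0$ makes the leading equation $b^r=0$ impossible). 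At $Q_2$ the strict transform reduces to $u^q(bu+c)^r=\lambda v^p$ with linearization eigenvalues $cp, cq$, yielding $\gcd(p,q)$ branches each with $i(\pi^*\F,B')=1$; at $Q_3$ the eigenvalues are $-br, -pb$, yielding $\gcd(p,r)$ branches each with $i(\pi^*\F,B')=1$.

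Reading off the Puiseux parametrizations gives $m_{P_3}(B)=q/\gcd(p,q)$ for a branch above $Q_2$ and $m_{P_3}(B)=r/\gcd(p,r)$ for one above $Q_3$. Proposition~\ref{multiexplo} in its non-dicritical form,
\[
i_{P_3}(\F,B)=i_Q(\pi^*\F,B')+m_{P_3}(B)(\nu_{P_3}(\F)-1),
\]
then contributes $1+q/\gcd(p,q)$ per branch above $Q_2$ and $1+r/\gcd(p,r)$ per branch above $Q_3$, summing to $\gcd(p,q)+q+\gcd(p,r)+r$ at $P_3$. Adding the $P_2$ contribution and substituting into Theorem~\ref{cerveaulins} yields
\[
2-2\gen(C)=\gcd(p,q)+\gcd(p,r)+\gcd(p,q+r)-p,
\]
so $\gen(C)=1$ if and only if $p=\gcd(p,q)+\gcd(p,r)+\gcd(p,q+r)$, as claimed. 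The hardest step is the blow-up analysis at $P_3$: identifying $Q_1, Q_2, Q_3$, proving that $Q_1$ carries no branches of the strict transform, and extracting the correct multiplicities $m_{P_3}(B)$ from the Puiseux parametrizations so that Proposition~\ref{multiexplo} can be applied.
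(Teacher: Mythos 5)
Your proposal is correct and follows essentially the same route as the paper: the same identification $S_C=\{P_2,P_3\}$, the linearization at $P_2$ giving $\gcd(p,q+r)$ branches of multiplicity one, a single non-dicritical blow-up at the nilpotent point $P_3$ producing the two relevant points on the exceptional divisor (with the $z=0$ direction not meeting the strict transform), and Proposition~\ref{multiexplo} yielding $\gcd(p,q)$ branches with $i=1+q/\gcd(p,q)$ and $\gcd(p,r)$ branches with $i=1+r/\gcd(p,r)$. You in fact write out several details the paper's text elides (the explicit Puiseux computation of $m_{P_3}(B)$ and the verification at the third direction), and the only discrepancy is an immaterial constant factor in the eigenvalues at $Q_3$ ($-cr,-cp$ rather than $-br,-bp$), which does not affect the ratio and hence the branch count.
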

\begin{proof}
Let $\F$ be the foliation induced by $df$, where $f$ is as in~\eqref{eq:eqLVa0}, that is
\begin{multline}\label{eq:omega:a0}
\omega =pyz(by+cz)dx+(bxyz(q+r)+qcxz^2)dy\\-((p+q+r)bxy^2+c(p+q)xyz)dz.
\end{multline}
Besides, it is straightforward to verify that
\[
S_C=\sing(\F)\cap C=\{P_2,P_3\}
\]
where $P_2=[0:1:0]$ and $P_3=[1:0:0]$. 
Also, $\deg(C)=p+q+r$ so, by Theorem~\ref{cerveaulins},
\begin{equation}\label{eqgen11}
2-2\gen(C)=\sum_{P\in S_C}i(\F,B_P)-(p+q+r)(2-1),
\end{equation}
where $B_P$ are the local branches of $C$ at $P$. 

It is a straightforward calculation to verify that there exist $l=\gcd(p,q+r)$ local branches of $C$ at $P_2$ with multiplicity one, therefore $\ds\sum_{B_{P_2}}i(\F,B_{P_2})=l$. So it remains to analyze $i(\F,B_{P_3})$. 
Locally, in $U=\{x=1\}$, we can write $P_3=(0,0)$, $C$ is given by
\[
y^q(by+cz)^r-z^{p+q+r}=0
\]
and
\[
\omega =(byz(q+r)+qcz^2)dy-((p+q+r)by^2+c(p+q)yz)dz. 
\]
Note that $P_3$, the associated linear part of $\omega$ is null. Let $\pi_{P_3}:\tilde{U}\to U$ the blow-up in $P_3$, and let $\pi^*_{P_3}C$ be the strict transformation of $C$ and $\pi_{P_3}^*\F$ be the induced foliation.

Then
$\sing\left(\pi^*_{P_3}\F\right)\cap\pi^*_{P_3}C=\{p_1,p_2\}$, where  $p_1=(0,0)$ and $p_2=(0,-c/b)$ are in coordinates $(z,t)$. Moreover, in such coordinates
\begin{gather}
\pi^*_{P_3}C\::\: t^q(bt+c)^r-z^p=0,\nonumber\\
\pi^*_{P_3}\omega=-pt(bt+c)dz+z(bt(q+r)+qc)dt\label{eq:omegablowuplotka}.
\end{gather}
Therefore, there exist $m=\gcd(p,q)$ branches of $\pi^*_{P_3}(C)$ in $p_1$ such that $i(\pi^*\F,\pi^*B)=1$ so, using Proposition~\ref{multiexplo}, there exist $m$ branches of $C$ in $P_3$ associated to $p_1$ such that $i(\F,B)=1+\dfrac{q}{m}$. 
$i(\F,B_{P_2})=1+\dfrac{r}{n}$. Replacing in~\eqref{eqgen11},
\[
 2-2\gen(C)=\sum_{i=0}^m(1+\frac{q}{m})+\sum_{i=0}^n(1+\frac{r}{n})+
\sum_{i=0}^l-(p+q+r)(2-1).
\]
Thus, $\gen(C)=1$ if, and only if, $p=m+n+l$.

\end{proof}

\begin{lemma}\label{lem:caseIII}
Consider the following table.
\[
\begin{array}{ccc||ccc||ccc||ccc}
p_0&q_0&r_0&p_0&q_0&r_0&p_0&q_0&r_0&p_0&q_0&r_0\\
\hline\hline
3&1&1 & 3&2&2 & 4&1&1 & 4&3&3\\ 
4&1&2 & 4&2&3 & 6&1&2 & 6&4&5\\ 
4&2&1 & 4&3&2 & 6&2&1 & 6&5&4\\ 
6&1&3 & 6&3&5 & 6&2&3 & 6&3&4\\
6&3&1 & 6&5&3 & 6&3&2 & 6&4&3
\end{array}
\]
The $3$-tuple $(p,q,r)\in\N^3$, with $\gcd(p,q,r)=1$, is a solution of equation~\eqref{eq:LV:caseIII} if, and only if,
$p=p_0$, $q\equiv q_0\mod p_0$ and $r\equiv r_0\mod p_0$, for a certain $(p_0,q_0,r_0)$ belonging to the given table.
Equivalently, every solution of~\eqref{eq:LV:caseIII} has the form
\[
(p,q,r)=(p_0,q_0+p_0 u,r_0+p_0 v),
\]
for some integers $u,v\geq 0$ and some $(p_0,q_0,r_0)$ in the table above.
\end{lemma}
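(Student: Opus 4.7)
The key observation is that each summand on the right-hand side of \eqref{eq:LV:caseIII} divides $p$, so dividing through by $p$ and writing $a=p/\gcd(p,q)$, $b=p/\gcd(p,r)$, $c=p/\gcd(p,q+r)$ reduces the equation to the Egyptian fraction identity
\[
\frac{1}{a}+\frac{1}{b}+\frac{1}{c}=1,
\]
whose only positive integer solutions, up to permutation, are $(3,3,3)$, $(2,4,4)$ and $(2,3,6)$. The plan is to translate each of these three cases back into arithmetic conditions on $(p,q,r)$, exploit $\gcd(p,q,r)=1$ to pin down $p$, and then enumerate the admissible residues of $q$ and $r$ modulo $p$. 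Since the three gcds involved depend only on $q,r\bmod p$, the complete solution set is automatically invariant under $q\mapsto q+p$ and $r\mapsto r+p$, which is exactly the form in which the lemma is stated.

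In the $(3,3,3)$ case, $p/3$ divides both $q$ and $r$, hence divides $\gcd(p,q,r)=1$, forcing $p=3$; the conditions then reduce to $q\not\equiv 0$, $r\not\equiv 0$ and $q+r\not\equiv 0\pmod 3$, which gives exactly $(p_0,q_0,r_0)=(3,1,1)$ and $(3,2,2)$. In the $(2,4,4)$ case, the two variables corresponding to gcds equal to $p/4$ are each divisible by $p/4$, so again $p/4\mid\gcd(p,q,r)=1$ and $p=4$; splitting into the three subcases according to which of $\gcd(4,q),\gcd(4,r),\gcd(4,q+r)$ equals $2$ produces the six representatives with $p_0=4$. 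In the $(2,3,6)$ case, a similar divisibility argument forces $p=6$; then I would go through the six assignments of the multiset $\{3,2,1\}$ to $(\gcd(6,q),\gcd(6,r),\gcd(6,q+r))$ and, for each, enumerate the residues $q,r\bmod 6$ compatible with the prescribed gcds, yielding the twelve representatives with $p_0=6$.

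Conversely, for every $(p_0,q_0,r_0)$ in the table one checks by direct substitution that $\gcd(p_0,q_0)+\gcd(p_0,r_0)+\gcd(p_0,q_0+r_0)=p_0$ and that $\gcd(p_0,q_0,r_0)=1$; since both the equation and the gcd condition depend only on the residues modulo $p_0$, the full family $(p_0,q_0+p_0u,r_0+p_0v)$ is a solution for all $u,v\geq 0$.

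The only real obstacle is the combinatorial bookkeeping in the $(2,3,6)$ case: the six orderings of the divisors and the requirement that the residue of $q+r$ match the prescribed $\gcd(6,q+r)$ must be listed explicitly. All the remaining verifications are routine once the Egyptian fraction reduction is in place.
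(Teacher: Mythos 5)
Your proposal is correct and takes essentially the same route as the paper: the paper's substitution $p=m\beta=n\gamma=l\alpha$ leading to $\alpha\beta\gamma=\alpha\beta+\beta\gamma+\alpha\gamma$ is exactly your Egyptian fraction identity $\frac{1}{a}+\frac{1}{b}+\frac{1}{c}=1$ in cleared-denominator form, and the subsequent steps (using $\gcd(p,q,r)=1$ to force $p\in\{3,4,6\}$, then enumerating the admissible residues of $q$ and $r$ modulo $p$ for each assignment of the divisor multiset) coincide with the paper's case analysis. The only cosmetic difference is that the paper derives the three solution triples $(3,3,3)$, $(2,4,4)$, $(2,3,6)$ by an explicit elementary argument, whereas you invoke the classical enumeration as known.
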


Note that the automorphism $[x:y:z]\mapsto [x:by+cz:z]$ interchange the values of $q$ and $r$ in~\eqref{eq:LV:caseIII}. Thus, from the above table we will consider only the solutions
\begin{multline*}
(p_0,q_0,r_0)\in\{(3,1,1),(3,2,2),(4,1,1),(4,3,3),(4,1,2),\\(4,2,3),(6,1,2),(6,4,5),(6,1,3),(6,3,5),(6,2,3),(6,3,4)\}
\end{multline*}

We now combine Propositions~\ref{prop8.5}, \ref{prop8.6}, \ref{propvolterra2}, along with Lemmas~\ref{lem:caseI}, \ref{lem:main-new}, \ref{lem:caseIII} and Remark~\ref{rem:LV} to obtain the following theorem.
\begin{theorem}\label{teo:lotkavolterra}
Let $f$ as in~\eqref{eq:eqLV} and let $\F$ be the foliation induced by $df$. Then $\F$ is elliptic if, and only if, after an automorphism of $\P^2$, it has a first integral of the form:
\begin{enumerate}\renewcommand{\theenumi}{\Roman{enumi}}
 \item $ab\neq 0$, $c\in\R$ and $p>0$, $q>0$:
\begin{gather*}
f(x,y,z)=\dfrac{xy(ax+by+cz)}{z^3}, \quad f(x,y,z)=\dfrac{xy(ax+by+cz)^2}{z^4}, \\
f(x,y,z)=\dfrac{xy^2(ax+by+cz)^3}{z^6},
\end{gather*}
\item $abc\neq 0$ and $p<0$, $q>0$, $p+q+r>0$:
\begin{gather*}
f(x,y,z)=\dfrac{y(ax+by+cz)^3}{x^2z^2},\quad f(x,y,z)=\dfrac{y^2(ax+by+cz)^2}{xz^{3}},\\
f(x,y,z)=\dfrac{y(ax+by+cz)^4}{x^2z^{3}},\quad f(x,y,z)=\dfrac{y^2(ax+by+cz)^3}{xz^{4}},\\
f(x,y,z)=\dfrac{y(ax+by+cz)^6}{x^3z^{4}},\quad f(x,y,z)=\dfrac{y^3(ax+by+cz)^4}{xz^{6}},
\end{gather*}
\item $a=0$, $bc\neq 0$ and $p>0$, $q>0$:
\begin{gather*}
f(x,y,z)=\dfrac{x^{3}y^{1+3u}(by+cz)^{1+3v}}{z^{5+3(u+v)}},\quad f(x,y,z)=\dfrac{x^{3}y^{2+3u}(by+cz)^{2+3v}}{z^{7+3(u+v)}},\\
f(x,y,z)=\dfrac{x^{4}y^{1+4u}(by+cz)^{1+4v}}{z^{6+4(u+v)}},\quad f(x,y,z)=\dfrac{x^{4}y^{3+4u}(by+cz)^{3+4v}}{z^{10+4(u+v)}},\\
f(x,y,z)=\dfrac{x^{4}y^{1+4u}(by+cz)^{2+4v}}{z^{7+4(u+v)}},\quad f(x,y,z)=\dfrac{x^{4}y^{2+4u}(by+cz)^{3+4v}}{z^{9+4(u+v)}},\\
f(x,y,z)=\dfrac{x^{6}y^{1+6u}(by+cz)^{2+6v}}{z^{9+6(u+v)}},\quad f(x,y,z)=\dfrac{x^{6}y^{4+6u}(by+cz)^{5+6v}}{z^{15+6(u+v)}},\\
f(x,y,z)=\dfrac{x^{6}y^{1+6u}(by+cz)^{3+6v}}{z^{10+6(u+v)}},\quad f(x,y,z)=\dfrac{x^{6}y^{3+6u}(by+cz)^{5+6v}}{z^{14+6(u+v)}},\\
f(x,y,z)=\dfrac{x^{6}y^{2+6u}(by+cz)^{3+6v}}{z^{11+6(u+v)}},\quad f(x,y,z)=\dfrac{x^{6}y^{3+6u}(by+cz)^{4+6v}}{z^{13+6(u+v)}},
\end{gather*}
for every $u,v\geq 0$, integers.
\end{enumerate}
In addition, for every first integral of the form
\[
f(x,y,z)=\dfrac{x^py^q(by+cz)^r}{z^{p+q+r}}
\]
in case III, with $p<q+r$, we must also consider a first integral of the form
\[
f(x,y,z)=\dfrac{x^{p-q-r}y^q(ax+by)^r}{z^{p}}=\dfrac{y^q(ax+by)^r}{x^{-p'}z^{p'+q+r}},\qquad p'=p-q-r.
\]
\end{theorem}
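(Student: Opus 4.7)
The plan is to assemble this theorem as an immediate consequence of the three case analyses already laid out, together with the arithmetic solutions to the associated gcd-equations. By Proposition~\ref{pro:newproLV}, after applying suitable automorphisms of $\P^2$ (and, in one subcase, the birational map $[x:y:z]\mapsto[xz:yx:z^2]$) the first integral~\eqref{eq:eqLV} can be reduced to one of the three canonical configurations \emph{I}, \emph{II}, \emph{III}. Since all the tools used in the reduction preserve either genus or the elliptic character of the generic fiber, it suffices to describe, in each of the three canonical cases, exactly which triples $(p,q,r)$ yield an elliptic generic fiber, and then to translate those triples back into explicit first integrals, adding the ``recovered'' ones that Remark~\ref{rem:LV} produces.

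For Case~\emph{I} I would invoke Proposition~\ref{prop8.5}, which characterizes ellipticity of the generic fiber by the equation $p+q+r=\gcd(q,p+r)+\gcd(p,q+r)+\gcd(r,p+q)$, and then apply Lemma~\ref{lem:caseI} to conclude that the only solutions with $\gcd(p,q,r)=1$ are $(1,1,1)$, $(1,1,2)$, and $(1,2,3)$. These produce the three listed integrals directly. For Case~\emph{II} I would combine Proposition~\ref{prop8.6} and Lemma~\ref{lem:main-new}; the automorphism $[x:y:z]\mapsto[x:ax+by+cz:z]$ swaps $q$ and $r$, while $[x:y:z]\mapsto[z:y:x]$ swaps $-p$ and $p+q+r$, so the $20$ solutions in the table of Lemma~\ref{lem:main-new} collapse into the six non-equivalent triples listed, which one reads off as the six integrals of item~\emph{II}. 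For Case~\emph{III} I would use Proposition~\ref{propvolterra2} together with Lemma~\ref{lem:caseIII}; the automorphism $[x:y:z]\mapsto[x:by+cz:z]$ swaps $q$ and $r$, reducing the $20$ residue classes $(p_0,q_0,r_0)$ to $12$, and then the parametrization $(p,q,r)=(p_0,q_0+p_0u,r_0+p_0v)$ yields precisely the twelve two-parameter families displayed in item~\emph{III}.

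The last piece is the ``additional'' first integrals at the very end of the statement. In the proof of Proposition~\ref{pro:newproLV} the subcase $c=0$, $p<0$, $q>0$, $p+q+r>0$ was folded into Case~\emph{III} by means of the birational map $[x:y:z]\mapsto[xz:yx:z^2]$, and that map preserves the genus of the generic fiber but not the form of the first integral. Reversing this map takes a Case~\emph{III} integral $x^py^q(by+cz)^r/z^{p+q+r}$ with $p<q+r$ back to $x^{p-q-r}y^q(ax+by)^r/z^{p}$, which, setting $p'=p-q-r<0$, lies outside the three canonical cases and must therefore be listed separately. Invoking Remark~\ref{rem:LV} to justify this step completes the list.

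I do not expect any real obstacle beyond careful bookkeeping: the geometric and arithmetic content has already been proved in Propositions~\ref{prop8.5},~\ref{prop8.6},~\ref{propvolterra2} and Lemmas~\ref{lem:caseI},~\ref{lem:main-new},~\ref{lem:caseIII}. The only point that demands attention is verifying, for each automorphism and for the birational reduction, that the conditions defining each case (signs of $p$, $q$, $p+q+r$, and the vanishing pattern of $a,b,c$) transform consistently with the relabelings used to collapse the solution tables; this is exactly what dictates both the final list in items \emph{I}--\emph{III} and the need for the supplementary family coming from Remark~\ref{rem:LV}.
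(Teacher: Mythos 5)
Your proposal matches the paper's own argument: the theorem is obtained exactly by combining Proposition~\ref{pro:newproLV} (reduction to cases I--III), Propositions~\ref{prop8.5}, \ref{prop8.6}, \ref{propvolterra2} with Lemmas~\ref{lem:caseI}, \ref{lem:main-new}, \ref{lem:caseIII}, collapsing the solution tables via the stated symmetry automorphisms, and appending the supplementary integrals supplied by Remark~\ref{rem:LV}. The bookkeeping you outline (which triples survive the $q\leftrightarrow r$ and $-p\leftrightarrow p+q+r$ identifications, and why the reversed birational map forces the extra family) is precisely what the paper does.
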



\section{Linear families of foliations}\label{sec:Linear}
Let $\F$ and $\G$ two distinct foliations on $X$ with isolated singularities, such that $N_{\F}=N_{\G}$. Then, there exist an open covering $\U=\{U_i\}_{i\in I}$ of $X$ and families $(\omega_i)_{i\in I}$, $(\eta_i)_{i\in I}$,
 $(g_{ij})_{U_{ij} \neq \emptyset}$, such that:
\begin{enumerate}
\item $\omega_i$ and $\eta_i$ are holomorphic 1-forms on $U_i$, which define $\F$ and $\G$ in $U_i$, respectively;
\item if $U_{ij}\neq\emptyset$, then $\omega_i= g_{ij}\omega_j$ and $\eta_i= g_{ij}\eta_j$ in $U_{ij}$, where $U_{ij}=U_i \cap U_j$.
\end{enumerate}
Condition 2 implies that the 3-tuple $\{U_i,\omega_i+\alpha \eta_i,g_{ij}\}_{i \in I}$ define a foliation $\F_{\alpha}$, for every $\alpha\in\ovl C$.   We thus have a linear family of foliations $\{\F_{\alpha}\}_{\alpha \in \ovl{\C}}$ such that $N_{\F_{\alpha}}=N_{\F}$, for all $\alpha\in \ovl{\C}$. Note that $\F=\F_0$ and $\G=\F_{\infty}$.  From now on, we denote $\Pc(\F, \G):=\{\F_{\alpha}\}_{\alpha\in \ovl{\C}}$, which is called the \emph{pencil} generated by $\F$ and $\G$.



The \emph{tangency set} $\Delta(\Pc)$ of the pencil $\Pc=\{\F_{\alpha}\}_{\alpha\in\ovl{\C}}$ is
\[
\Delta(\Pc)=\tang(\F_0,\F_{\infty}),
\]
where $\tang(\F_0,\F_{\infty})$ is the analytic set $\tang(\F_0,\F_{\infty})\cap U_i=\{p\in X: \omega_i \wedge\eta_i(p)=0\}$.
In the same way, the \emph{singular set} of $\F_{\alpha}$, for $\alpha \in \ovl{\C}$, as the analytic set $\sing(\F_{\alpha})$ such that $\sing(\F_{\alpha})\cap U_i=\{\omega_{i,\alpha}= \omega_i+\alpha \eta_i=0\}$.

We will need the following lemma, which can be found in~\cite[Lemma~3.2.1]{LN3}.
\begin{lemma}\label{lem3cap2}
Let $\F$ and $\G$ foliations on $X$ with isolated singularities, such that $N_{\F}=N_{\G}$ and assume that $\F$ possesses an holomorphic first integral $f:X\to S$, where $S$ is a compact Riemann surface. Thus, the following hold.
\begin{enumerate}
\item If $\gen(f)=0$ then $\F=\G$.
\item If $\gen(f)=1$ and $\F\neq\G$, then $\G$ is turbulent with respect to $f$.
\item If $\gen(f)\geq 2$ and $\F\neq\G$ then $\tang(\G,F)>0$, for every regular fiber $F$ of $f$, not invariant by $\G$.
\end{enumerate}
\end{lemma}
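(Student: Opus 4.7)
The plan is to restrict the 1-form $\eta$ defining $\G$ to a regular fiber $F$ of $f$, exploit $N_\F=N_\G$ together with the triviality of $N_\F|_F$ to descend the restriction to a globally defined holomorphic 1-form $\alpha$ on the normalization $\tilde F$ whose zero divisor records the tangency of $\G$ with $F$, and then read off the three conclusions from the constraint that $\deg\alpha=2\gen(f)-2$.

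For a regular fiber $F=f^{-1}(p)$ of $f\colon X\to S$, the fiber has trivial normal bundle (being the pullback of the 1-dimensional $T_pS$), and, because $F$ is a leaf of $\F$, one checks that $N_\F$ is trivial in a neighborhood of $F$; in particular $N_\F|_F$ is trivial. The hypothesis $N_\F=N_\G$ then gives that $N_\G|_F$ is trivial as well. Writing $\eta$ locally as a cocycle $\eta_i=g_{ij}\eta_j$ subordinate to an open cover $\{U_i\}$, and choosing adapted coordinates $(x,y)$ on $U_i$ with $\omega_i=dy$, $F=\{y=0\}$ and $\eta_i=A_i\,dx+B_i\,dy$, the local pieces $A_i(x,0)\,dx$ glue, via a trivialization of $N_\G|_F$, into a single holomorphic section $\alpha\in H^0(\tilde F,\Omega^1_{\tilde F})$. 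One verifies directly that $\alpha\equiv 0$ exactly when $F$ is $\G$-invariant, and that when $\alpha\not\equiv 0$ its zeros coincide with $\tang(\G,F)$ counted with multiplicity, for generic $p$ such that $F$ avoids the finite set $\sing\G$.

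The three statements now follow from classical degree considerations on $\tilde F$. In case (1), $\gen(f)=0$ gives $\tilde F\simeq\P^1$ and $H^0(\tilde F,\Omega^1_{\tilde F})=0$, so $\alpha\equiv 0$ on every regular fiber; every such fiber is therefore $\G$-invariant, and since $\F$ and $\G$ share an infinite family of leaves we conclude $\F=\G$. In case (2), $\gen(f)=1$ forces $\alpha$ to be either identically zero or nowhere-vanishing on the elliptic curve $\tilde F$; as $\F\neq\G$, the reasoning of case~(1) precludes the first alternative for a generic fiber, so $\G$ is transverse to generic fibers of $f$, which is the definition of $\G$ being turbulent with respect to $f$. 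In case (3), a regular fiber $F$ not invariant by $\G$ produces $\alpha\not\equiv 0$ on $\tilde F$, and $\deg\alpha=2\gen(f)-2>0$ exhibits at least $2\gen(f)-2$ zeros, yielding $\tang(\G,F)>0$.

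The step I anticipate to be most delicate is the gluing of the local pieces $A_i(x,0)\,dx$ into a globally well-defined 1-form on $\tilde F$: one must verify that the identification $N_\F=N_\G$ is compatible with the specific cocycles $\{g_{ij}\}$ chosen to present $\F$ and $\G$, so that the canonical trivialization of $N_\F|_F$ transfers to a trivialization of $N_\G|_F$ which makes the restrictions $\eta_i|_F$ cohere. A minor secondary technicality is handling possibly non-reduced or singular fibers, but this is side-stepped by the statement itself, which restricts attention to \emph{regular} fibers $F$, where $\tilde F=F$ is smooth and a generic such fiber misses $\sing\G$.
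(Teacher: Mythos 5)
The paper does not prove this lemma: it is quoted from~\cite[Lemma~3.2.1]{LN3}, so there is no internal proof to compare against. Your argument is correct, and it is essentially the section-level version of the intersection-theoretic computation the paper \emph{does} carry out later, in the proof of Proposition~\ref{lem4cap2}: there one computes $T_{\F_\infty}\cdot F=\chi(F')-Z(\F_0,F')=2-2\gen(f)$ on an invariant regular fiber and equates it with $F\cdot F-\tang(\F_\infty,F)=-\tang(\F_\infty,F)$ on a non-invariant one, which is exactly the degree count $\deg\alpha=2\gen(f)-2$ that you obtain by exhibiting $\eta|_F$ as a section of $\Omega^1_F\otimes N_{\G}|_F$ and trivializing $N_{\G}|_F\cong N_{\F}|_F\cong N_F\cong\mathcal{O}_F$. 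Your version buys slightly more: the dichotomy ``$\alpha\equiv 0$ or $\alpha$ nowhere zero'' on a genus-one fiber shows that every regular fiber is either $\G$-invariant or everywhere transverse to $\G$, which is the full content of ``turbulent'', whereas the Chern-class identity alone only gives $\tang(\G,F)=0$. Two points to tighten. First, the cocycle compatibility you flag as delicate is harmless: since $N_{\F}=N_{\G}$ one may choose local generators $\omega_i,\eta_i$ with a \emph{common} cocycle $g_{ij}$ (this is precisely the normalization made at the beginning of Section~\ref{sec:Linear}), after which the restrictions $\eta_i|_F$ cohere into a section of $\Omega^1_F\otimes N_{\F}|_F$ with no further choices. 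Second, in case (1) and in the generic-fiber step of case (2), the passage from ``infinitely many $\G$-invariant regular fibers'' to $\F=\G$ is best phrased via the tangency set: all such fibers lie in the analytic set $\{\omega_i\wedge\eta_i=0\}$, which is either a proper analytic curve (and hence cannot contain infinitely many distinct fibers) or all of $X$, and the latter forces $\F=\G$.
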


\begin{proposition}\label{lem4cap2}
Let $\Pc=\{\F_{\alpha}\}_{\alpha\in \ovl{\C}}$ be a pencil on $X$ such that $\F_0$ have an holomorphic first integral $f:X \to S$ and every singularity of $\F_0$ is isolated. Then $\gen(f) \geq 1$.  Moreover
\begin{enumerate}
\item If $\gen(f)=1$, 
then there exist $c_1, \ldots c_k \in S$ such that $\Delta(\Pc) \subset \ds\bigcup_{j=1}^kf^{-1}(c_j)$.
\item If $\gen(f)\geq 2$ then $\Delta(\Pc)$ has a non-invariant component.
\end{enumerate}
\end{proposition}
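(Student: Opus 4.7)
The plan is to invoke Lemma~\ref{lem3cap2} applied to the two distinct ends $\F_0$ and $\F_\infty=\G$ of the pencil. First, the inequality $\gen(f)\geq 1$ is immediate: if $\gen(f)=0$, then item~(1) of Lemma~\ref{lem3cap2} would force $\F_0=\F_\infty$, contradicting the fact that the pencil is generated by two distinct foliations.

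For item~1, I would apply part~(2) of Lemma~\ref{lem3cap2}: when $\gen(f)=1$, the foliation $\F_\infty$ is turbulent with respect to $f$, so it is transverse to all but finitely many fibers $f^{-1}(c_1),\ldots,f^{-1}(c_k)$. Since the fibers of $f$ are the leaves of $\F_0$, at any point $p\notin\bigcup_j f^{-1}(c_j)$ the tangent lines of $\F_0$ and $\F_\infty$ are distinct; in the notation of the pencil definition, $\omega_i\wedge\eta_i(p)\neq 0$, so $p\notin\Delta(\Pc)$. This is exactly the desired inclusion $\Delta(\Pc)\subset\bigcup_{j=1}^k f^{-1}(c_j)$.

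Item~2 is where the real work lies. The plan is to combine part~(3) of Lemma~\ref{lem3cap2} with a finiteness argument. The preliminary observation, which I expect to be the main obstacle, is to verify that $\F_\infty$ admits only finitely many invariant regular fibers of $f$. This uses the fact that any such invariant fiber is entirely contained in the tangency locus $\tang(\F_0,\F_\infty)=\Delta(\Pc)$ (along an $\F_\infty$-invariant fiber $F$, both $F\subset\F_0$ and $\F_\infty$ have tangent direction equal to that of $F$, so $\omega_i\wedge\eta_i$ vanishes along $F$), while $\Delta(\Pc)$ is a proper analytic curve in $X$ (since $\F_0\neq\F_\infty$ forces $\omega_i\wedge\eta_i\not\equiv 0$) and thus has only finitely many irreducible components.

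Granted this, part~(3) of Lemma~\ref{lem3cap2} provides, for each of the remaining (cofinitely many) regular non-invariant fibers $F$, a tangency point of $\F_\infty$ with $F$; each such point lies in $\Delta(\Pc)$ by the same local argument as above. Therefore the image $f\bigl(\Delta(\Pc)\bigr)$ is a cofinite subset of $S$, and in particular infinite. Since every $\F_0$-invariant irreducible curve is contained in a single fiber of $f$, the infiniteness of $f(\Delta(\Pc))$ forces at least one irreducible component of $\Delta(\Pc)$ to project non-constantly onto $S$, producing the required non-invariant component and completing the proof.
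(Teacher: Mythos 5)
Your proof is correct, and it reaches both conclusions, but it routes the argument through items (2) and (3) of Lemma~\ref{lem3cap2} taken as black boxes, whereas the paper's proof is built around one explicit intersection-theoretic identity: for \emph{every} regular fiber $F$ (invariant or not), $T_{\F_{\alpha}}\cdot F=2-2\gen(f)$, proved by comparing $F$ with a fiber avoiding $\sing(\F_0)$ in the invariant case and using $T_{\F_\alpha}\cdot F=F\cdot F-\tang(\F_\alpha,F)$ in the non-invariant case. From this the paper gets item~1 by contradiction (a component of $\Delta(\Pc)$ not in a fiber would meet a non-invariant regular fiber $F$ in a tangency point, forcing $\tang(\F_\alpha,F)>0$ while the identity gives $\tang(\F_\alpha,F)=0$), and item~2 directly (a generic fiber missing an invariant $\Delta(\Pc)$ would have $\tang(\F_\infty,F)=0$, against $2\gen(f)-2\neq 0$). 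Two remarks on your version. First, in item~1 you use ``turbulent $\Rightarrow$ transverse to all but finitely many fibers''; that implication is true, but it is exactly the content of the computation $\tang(\F_\infty,F)=-T_{\F_\infty}\cdot F=2\gen(f)-2=0$ on non-invariant elliptic fibers, i.e.\ the step the paper carries out explicitly --- since the paper never defines ``turbulent,'' you should either cite this property or reprove it. Second, Lemma~\ref{lem3cap2} requires \emph{both} foliations to have isolated singularities, while the proposition only assumes this for $\F_0$; you need the paper's preliminary reduction (replace $\F_\infty$ by a generic member $\F_\alpha$ of the pencil, which has isolated singularities and the same tangency set with $\F_0$) before invoking the lemma. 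Your item~2, which deduces a non-invariant component from the infinitude of $f(\Delta(\Pc))$ rather than from a single generic fiber, is a slightly longer but perfectly sound variant; it has the small virtue of not needing the invariant-fiber case of the identity $T_{\F_\infty}\cdot F=2-2\gen(f)$ at all.
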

\begin{proof}
Without loss of generality, we can assume that all singularities of $\F_{\infty}$ are also isolated and $\F_0 \neq \F_{\infty}$. Using Lemma~\ref{lem3cap2}, item {\it 1}, $\gen(f)\neq 0$, thus $\gen(f)\geq 1$.

Now take $F$ any regular fiber of $f$, then
\begin{equation}\label{eq3-1}
 T_{\F_{\infty}}\cdot F=2-2\gen(f).
\end{equation}
Indeed, if $F$ is invariant by $\F_{\infty}$ then there exists a regular fiber $F'$ of $f$ such that $F'\cap \sing(\F_0)=\emptyset$, that is, $Z(\F,F')=0$. Therefore
\[ 
T_{\F_{\infty}}\cdot F=T_{\F_0}\cdot F=T_{\F_0} \cdot F'=\chi(F')-Z(\F,F')= 2-2 \gen(f).
\] 
On the other hand, if $F$ is not invariant by $\F_{\infty}$ then
\[ 
T_{\F_{\infty}}\cdot F=F \cdot F-\tang(\F_{\infty},F)=-\tang(\F_{\infty},F)=T_{\F_0}\cdot F=2-2 \gen(f).	
\] 


Now assume that $\gen(f)=1$ and choose any component $C$ of $\Delta(\Pc)$. If $C$ is not contained on a fiber of $f$ then $C$ is a non-invariant component of $\Delta(\Pc)$. 
Given $\F_{\alpha}$ with all singularities are isolated, there exists a regular fiber $F$, not invariant by $\F_{\alpha}$, such that $F\cap C\neq\emptyset$ and, in particular, $\tang(\F_{\alpha},F)>0$. Therefore,
\[
T_{\F_{\alpha}}\cdot F=F\cdot F-\tang(\F_{\alpha},F)=T_{\F_{\infty}}\cdot F=0,
\]
which implies $\tang(\F_{\alpha},F)=0$, a contradiction. Thus, every component $C$ of $\Delta(\Pc)$ is contained on a fiber of $f$, hence, there exist $c_1, \ldots, c_k \in S$ such that
\[
 \Delta(\Pc) \subset \bigcup_{j=1}^k f^{-1}(c_j).
\]
This proves {\it 1}.

On the other hand, assume $\gen(f)\geq 2$  and let $F$ be a generic regular fiber of $f$, not invariant by $\F_{\infty}$. Hence, by~\eqref{eq3-1},
\[
\tang(\F_{\infty},F)<0.
\]
Now, if $\Delta(\Pc)$ were invariant, then there exist $c_1,\ldots,c_k\in S$ such that $\ds\Delta(\Pc) \subset \bigcup_{i=1}^k F_{c_i} $ and $F\cap\Delta(\Pc)=\emptyset$. In particular,
\[
\tang(\F_{\infty},F)= 0,
\]
a contradiction.
\end{proof}

The following example shows that, in Proposition~\ref{lem4cap2}, is necessary that every singularity of $\F_0$ must be isolated.
\begin{example}\label{ejmcontra}
Let's consider the pencil $\Pc=\Pc(\F,\G)$ on $\P^2$, where $\F$ and $\G$ are defined by the polynomial 1-forms
\begin{align*}
\omega&=-z(3 a x^2+3 y^2+ c z^2)dx+ 4 x y z dy- x(3 ax^2+3y^2 -c z^2) dz,\\
\eta&=   z^2 x dx -  x^2 z dz,
\end{align*}
where $a,c \in \C^*$ are fixed constants. Then,
\begin{enumerate}
\item given $\alpha \in \C$,
\[
H_{\alpha}=\frac{y^2+a x^2 + \alpha x z+ c z^2}{xz^3}
\]
is a first integral of $\F_{\alpha}$. In addition, $H_{\infty}=\dfrac{x}{z}$ is a first integral of $\F_{\infty}$.
\item Let $A_{\pm}=[0:\pm\sqrt{c}:1]$, $D_1=[0:1:0]$, $C_{\pm}=[1:\pm\sqrt{a}:0]$  and
\[
B_{\pm}(\alpha)=\left[\frac{-\alpha\pm\sqrt{\alpha^2+12ac}}{6a}:0:1\right].
\]
Given $\alpha\in\C$,
\[
\sing(\F_{\alpha})=\{A_{\pm},B_{\pm}(\alpha),C_{\pm},D_1\}.
\]
The singularities $A_{\pm}$, $C_{\pm}$ and $D_1$ are fixed singularities of $\F_{\alpha}$ of type $(2:1)$, $(2:3)$ and $ (1:-3)$, respectively, and $B_{\pm}(\alpha)$
of fixed type equal to $(-1:1)$. Moreover, $\sing(\F_{\infty})=\{x=0\} \cup \{z=0\}$.

\item $\Delta(\Pc)=\{x^2y z^2=0\}$, where $\{x=0\}$ and $\{z=0\}$ are 
invariant by $\Pc$, and  $\{y=0\}$  is a non-invariant.
\item For any $\alpha\in\C $ we have $\gen(F_c)=1$, where $F_c=H_{\alpha}^{-1}(c)$ and $c \in \C\setminus \{0,\frac{16}{\sqrt{3}}, \frac{-16}{\sqrt{3}}, \infty\}$.
Besides $H_{\alpha}$ possesses four critic fibers associated to $c\in \{0,\frac{16}{\sqrt{3}}, \frac{-16}{\sqrt{3}}, \infty\}$. 
\end{enumerate}
Let $q_0=C_+$, without loss of generality we can assume that there exists a local system of coordinates $(x,y,U_0)$, with $q_0\in U_0$ and $x(q_0)=y(q_0)=0$, such that $\omega=2 x dy-3 y dx$ and $\eta=x d x$ represent $\F_0$ and $\F_{\infty}$ on $U$, respectively.
Let $\pi_{q_0}:=\pi_3\circ \pi_{2}\circ \pi_1$ the desingularization process of $\F_0$ in $q_0$, then  $f=\pi_{q_0} \circ H_0$ is an elliptic fibration of $\F_0$.
For the first blow-up, there exist a system of coordinates $(x,t,U_1)$ such that,
\[
\pi_1^*(\omega+\alpha \eta)= x((\alpha-t)dx+2x dt). 
\]
Similarly, for the third blow-up there exists a system of coordinates $(s,w,U)$ such that $\pi_{q_0}^*\F_{\alpha}|_{U}$ is given by
\[ 
 s(s\omega+\alpha)d\omega+2\alpha \omega ds.
\] 
This means that there exists non-isolated singularities of $\pi_{q_0}^*\F_0$ and $\pi_{q_0}^*(\{y=0\})$ is not invariant by $\Delta(\widetilde{\Pc})$, where  $\widetilde{\Pc}=\pi_{q_0}^*(\Pc)$.


\end{example}


%
%

\begin{proposition}\label{lem4cap2-2}
Let $\Pc=\{\F_{\alpha}\}_{\alpha\in \ovl{\C}}$ be a pencil in $X$ such that $\F_0$ has an  holomorphic first integral $f:X \to S$ with isolated singularities. Then $\gen(f)=1$ if, and only if,  $\Delta(\Pc)$ is invariant.
\end{proposition}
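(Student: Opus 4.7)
The statement is essentially a repackaging of Proposition~\ref{lem4cap2}, so the plan is to derive both implications from that result together with Lemma~\ref{lem3cap2}. Since $\F_0$ has isolated singularities by hypothesis, we already know $\gen(f) \geq 1$ from Proposition~\ref{lem4cap2}, so the dichotomy is just between $\gen(f)=1$ and $\gen(f)\geq 2$. The two directions will be handled separately but in both cases by invoking the corresponding item of Proposition~\ref{lem4cap2}.

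For the direction $\gen(f)=1 \Rightarrow \Delta(\Pc)$ invariant, I would first dispose of the trivial case $\F_0 = \F_\infty$ (then $\Delta(\Pc) = X$, vacuously invariant). Assuming $\F_0\neq\F_\infty$, Proposition~\ref{lem4cap2}(1) gives $\Delta(\Pc)\subset \bigcup_{j=1}^k f^{-1}(c_j)$, so it suffices to show that each fiber $f^{-1}(c_j)$ is invariant by every foliation $\F_\alpha$ in the pencil. The fiber is automatically $\F_0$-invariant because $f$ is a first integral of $\F_0$. By Lemma~\ref{lem3cap2}(2), $\F_\infty$ is turbulent with respect to $f$, hence the generic fiber is also $\F_\infty$-invariant. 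For an arbitrary $\alpha$, the local representative $\omega_{i,\alpha}=\omega_i+\alpha\eta_i$ restricted to a generic regular fiber $F$ vanishes along $TF$, since both $\omega_i|_F$ and $\eta_i|_F$ do; by a closedness/continuity argument this invariance extends to the special fibers $f^{-1}(c_j)$ as well. Therefore $\Delta(\Pc)$ lies in a union of pencil-invariant fibers and is thus invariant by $\Pc$.

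For the converse, $\Delta(\Pc)$ invariant $\Rightarrow \gen(f)=1$, I would argue by contradiction: if $\gen(f)\geq 2$, then Proposition~\ref{lem4cap2}(2) guarantees that $\Delta(\Pc)$ contains a non-invariant component, contradicting the hypothesis. Combined with $\gen(f)\geq 1$, this forces $\gen(f)=1$.

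The only non-cosmetic step is passing from the inclusion $\Delta(\Pc)\subset\bigcup f^{-1}(c_j)$ to genuine pencil-invariance of each of these fibers; this is where turbulence of $\F_\infty$ enters crucially, and where one must be careful that the (finitely many) exceptional fibers $f^{-1}(c_j)$ inherit invariance from the generic ones. Everything else is a direct quotation of Proposition~\ref{lem4cap2}, so I do not expect any further obstacle.
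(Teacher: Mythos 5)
Your converse direction ($\Delta(\Pc)$ invariant $\Rightarrow \gen(f)=1$) is correct and is exactly the paper's argument: it follows at once from Proposition~\ref{lem4cap2}, since $\gen(f)\geq 2$ would force a non-invariant component of $\Delta(\Pc)$.

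The forward direction, however, contains a genuine error. You invoke Lemma~\ref{lem3cap2}(2) to say that $\F_\infty$ is turbulent with respect to $f$ and conclude that ``the generic fiber is also $\F_\infty$-invariant''; this reverses the meaning of turbulence. A foliation turbulent with respect to an elliptic fibration is \emph{generically transverse} to the fibers (only finitely many fibers are invariant), so $\eta_i$ restricted to a generic regular fiber $F$ does \emph{not} vanish on $TF$, and the subsequent claim that $\omega_i+\alpha\eta_i$ vanishes along $TF$ collapses. The strategy of proving that the whole fibers $f^{-1}(c_j)$ are pencil-invariant is therefore not established by your argument (and the ``closedness/continuity'' extension to special fibers would need justification even if it were). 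The paper's proof avoids turbulence entirely and is purely local: take any component $C$ of $\Delta(\Pc)$; by Proposition~\ref{lem4cap2}(1) it lies in a fiber $f^{-1}(c)$, hence is $\F_0$-invariant. Because $\F_0$ has isolated singularities, a generic point $p\in C$ is regular for both foliations, and since $p\in\Delta(\Pc)$ the tangent directions of $\F_0$ and $\F_\infty$ at $p$ coincide; as $C$ is $\F_0$-invariant this common direction is $T_pC$, so $C$ is also $\F_\infty$-invariant and hence invariant by every $\F_\alpha=\F_0+\alpha\F_\infty$. Replacing your turbulence step by this tangency-locus argument repairs the proof.
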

\begin{proof}
Lemma~\ref{lem4cap2} directly implies the if part. Conversely, if $\gen(f)=1$, take $C$ any component of $\Delta(\Pc)$. Again by lemma~\ref{lem4cap2}, there exists $c\in S$ such that $C\subset f^{-1}(c)$, in particular $C$ is invariant by $\F_0$. Since $\F_0$ possesses isolated singularities and $C\subset\Delta(\Pc)$ is invariant by $\F_0$, we conclude that $C$ is invariant by $\Pc$.
\end{proof}

Example~\ref{ejmcontra} shows that in Proposition~\ref{lem4cap2-2}, the condition that $\F_0$ must have non-isolated singularities is necessary. 
In fact, every linear family induced by Gautier's classification share the property that, if we make a sequence of blow-ups to obtain an elliptic fibration, the associated linear pencil has the property that the foliation  $\pi^*(\F_0)$ have an elliptic fibration but does not have isolated singularities and possesses a non-invariant curve. Moreover, this family does not belong to any of the four types given in the following theorem, due to Lins-Neto.

\begin{theorem}[Lins-Neto~{\cite{LN3}}]\label{teo:lins7}
Let $\Pc=\{\F_{\alpha}\}_{\alpha\in \ovl{\C}}$ be a pencil in  $X$ such that $\F_0$ and $\F_{\infty}$ has all their singularities are reduced and have first integral $f:X \to S_1$  and $g:X \to S_2$, respectively.
If $\Delta(\Pc)$ is invariant then $\Pc$ is bimeromorphically equivalent to four possible types in $\P^2$:
\begin{enumerate}
\item Degree two pencil, defined by
\[
\Pc_2 \left\{\begin{array}{l}
\omega_1=(4x-9x^2+y^2)dy-(6y-12xy)dx,\\
\eta_1=(2y-4xy)dy-3(x^2-y^2)dx.
\end{array}\right.
\]
\item Degree three pencil, defined by
\[
\Pc_3\left\{\begin{array}{l}
\omega_2=(-x+2 y^2-4x^2 y+x^4)dy-y(-2-3xy+x^3)dx,\\
\eta_2=(2y-x^2+xy^2)dy-(3xy-x^3+2y^3)dx.
\end{array}\right.
\]
\item Degree four pencil, defined by
\[
\Pc_4\left\{\begin{array}{l}
\omega_3=(x^3-1)xdy-(y^3-1)ydx,\\
\eta_3=(x^3-1)y^2dy-(y^3-1)x^2dx.
\end{array}\right.
\]
\item Degree three pencil, defined by
\[
\Pc_3'\left\{\begin{array}{l}
\omega_4=(-4x+x^3+3 x y^2)dy-2 y(y^2-1)dx,\\
\eta_4=(x^2y-y^3)dy-2 x(y^2-1)dx.
\end{array}\right.
\]
\end{enumerate}
\end{theorem}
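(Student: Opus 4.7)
The plan is to exploit Proposition~\ref{lem4cap2-2} together with the classification theory of elliptic fibrations on rational surfaces. By Proposition~\ref{lem4cap2-2}, the hypothesis that $\Delta(\Pc)$ is invariant, combined with the assumption that $\F_0$ admits a holomorphic first integral $f$ with isolated (in fact reduced) singularities, forces $\gen(f)=1$. By the symmetry of the pencil, replacing $\F_0$ with $\F_\infty$ yields $\gen(g)=1$ as well. Hence both $f$ and $g$ are elliptic fibrations, and the problem reduces to classifying pencils on $\P^2$ whose two distinguished members are simultaneously elliptic with reduced singularities.

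The second step is to resolve the indeterminacy of the rational maps $f$ and $g$ by a common birational morphism $\pi: \tilde X \to \P^2$. Because the singularities of both foliations are reduced, this desingularization produces honest elliptic fibrations $\tilde f, \tilde g : \tilde X \to \P^1$ on a smooth rational surface. The invariance of $\Delta(\Pc)$ says that every component of the tangency set lies in a fiber of $\tilde f$ and in a fiber of $\tilde g$. Since two distinct fibers of an elliptic fibration intersect in only finitely many points, each such tangency component must appear as a common irreducible component of a (necessarily singular) fiber of $\tilde f$ and of $\tilde g$. Thus $\tilde X$ carries two elliptic fibration structures whose singular fibers share reducible components in a very constrained way.

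The third step is combinatorial, via Kodaira's classification of singular fibers. For a relatively minimal rational elliptic surface one has $\chi_{\mathrm{top}}(\tilde X)=12$, and the Euler characteristic of the total space equals the sum of the Euler characteristics of the singular fibers of each fibration. Applying this to both $\tilde f$ and $\tilde g$, together with the matching of shared fiber components dictated by $\Delta(\Pc)$, produces a finite list of admissible Kodaira-type configurations. For each such configuration, one reconstructs the pair $(\F_0,\F_\infty)$ up to birational equivalence; the four possibilities $\Pc_2$, $\Pc_3$, $\Pc_4$, $\Pc_3'$ correspond precisely to the four combinatorially admissible ``matched'' singular fiber patterns, each of which can be realized explicitly by the $1$-forms $(\omega_i,\eta_i)$ listed in the statement.

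The main obstacle will be Step three: running the Kodaira/Euler bookkeeping simultaneously for $\tilde f$ and $\tilde g$ on the same surface, while keeping track of the reducedness condition on the original singularities (which restricts the allowed multiplicities in the fibers), and ruling out all configurations except the four listed. A secondary technical point is to verify that the four explicit pencils $\Pc_2,\Pc_3,\Pc_4,\Pc_3'$ are pairwise bimeromorphically inequivalent and that every admissible configuration is bimeromorphically equivalent to exactly one of them; this is where one must use automorphisms of $\P^2$ and Cremona transformations to normalize the position of the invariant components of $\Delta(\Pc)$ before matching to one of the four normal forms.
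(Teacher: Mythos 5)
This statement is quoted verbatim from Lins-Neto~\cite{LN3}; the paper offers no proof of it, so there is no internal argument to compare yours against. Judged on its own terms, your proposal is a plausible opening but not a proof: Step~1 is sound (Proposition~\ref{lem4cap2-2} applied to $\F_0$ and, by symmetry, to $\F_\infty$ does give $\gen(f)=\gen(g)=1$), but everything after that is a declaration of intent. The entire content of the theorem --- that exactly four pencils survive, up to bimeromorphic equivalence --- is concentrated in your Step~3, which you describe as ``a finite list of admissible Kodaira-type configurations'' without producing the list, showing it is finite, or explaining how a configuration determines the pencil. Asserting that the four normal forms ``correspond precisely to the four combinatorially admissible matched singular fiber patterns'' is restating the theorem, not proving it.

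There is also a concrete structural obstruction to the bookkeeping you propose. The identity $\chi_{\mathrm{top}}(\tilde X)=12$ and the Kodaira fiber-sum formula apply to \emph{relatively minimal} rational elliptic surfaces, and a relatively minimal rational elliptic surface carries a \emph{unique} elliptic fibration (the one induced by the anticanonical system). Hence a surface $\tilde X$ on which both $\tilde f$ and $\tilde g$ are morphisms cannot be relatively minimal for both unless the two fibrations coincide; the common resolution will contain vertical $(-1)$-curves for at least one of the fibrations, the Euler-number budget is no longer $12$, and the ``shared singular fiber component'' analysis is not a priori finite. Finally, your argument only ever uses the two endpoints $\F_0$ and $\F_\infty$; it makes no use of the fact that they sit in a linear pencil with common normal bundle and common tangency divisor shared by every $\F_\alpha$. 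That extra structure (the behaviour of the generic member of the pencil) is what actually rigidifies the situation down to four models in Lins-Neto's classification, and without invoking it your approach has no mechanism to exclude the many pairs of elliptic fibrations that do not extend to such a pencil.
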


\section{Aknowledgements}
The second author was partially supported by Postdoctoral Grant CG 07-2013 FONDECYT.


\begin{appendices}
\section{Arithmetic related proofs}\label{app:lemma}
We begin with Lemma~\ref{lem:caseI} in Section~\ref{sec:LV:an0cn0}.

\begin{proof}[Proof of Lemma~\ref{lem:caseI}]
We write equation~\eqref{eq:LV:caseI} as
\[
p+q+r=m+n+l,
\]
where $m=\gcd(q,p+r)$,  $n=\gcd(p,q+r)$ and  $l=\gcd(r,p+q)$. Observe that $m\leq q$, $n\leq p$ and $l\leq r$, so we have
\[
p+q+r=m+n+l \quad \iff \quad \left\{ 
\begin{aligned}
m=\gcd(q,p+r)&=q,\\
n=\gcd(p,q+r)&=p,\\
l=\gcd(r,p+q)&=r.
\end{aligned}\right.
\]
Assume that $\gcd(q,p+r)=q$,  $\gcd(p,q+r)=p$ and  $\gcd(r,p+q)=r$, then there exist $ \alpha,\beta, \gamma \in \N$ such that
\[
p+r=\alpha q,\qquad q+r=\beta p,\qquad p+q=\gamma r.
\]
These equalities imply
\begin{align}
(\alpha+1)q&=(\beta+1)p;\label{igg1}\\
(\beta+1)p&=(\gamma+1)r;\label{igg2}\\
(\gamma\beta-1)p&=(\gamma+1)q\label{igg3},
\end{align}
and, since  $p\leq q \leq r$ , we obtain $\beta \geq
\alpha \geq \gamma$. On the other hand, combining~\eqref{igg1} y~\eqref{igg3}, we obtain
\[
\ds \frac{q}{p}=\frac{\beta+1}{\alpha+1}=\frac{\gamma\beta-1}{\gamma+1}.
\]
Thus,
\begin{equation}\label{eq4}
\alpha \gamma \beta=2+\alpha+\beta+\gamma, \quad \beta \geq
\alpha \geq \gamma
\end{equation}
Hence, we have two possibilities:
\begin{enumerate}
\item If $\alpha=\beta$, equation \eqref{eq4} implies
\[
\beta^2 \gamma=2+2\beta +\gamma \iff \gamma(\beta-1)=2.
\]
Therefore $(\beta,\gamma)=(3,1)$ or $(2,2)$, and these imply $(\alpha,\beta,\gamma)=(3,3,1)$ or $(2,2,2)$. 
Analogously, if $\alpha=\gamma$ then 
\[
\gamma^2 \beta=2+2\gamma +\beta \iff \beta(\gamma-1)=2.
\]
Together with $\gamma\leq \beta$, we obtain $\gamma=\beta=2$, which imply  $(\alpha,\beta,\gamma)=(2,2,2)$.

Thus, from equations~\eqref{igg1}, \eqref{igg2} y~\eqref{igg3}, and the values $(\alpha,\beta, \gamma)$ obtained above, we obtain $(p,q,r)=(1,1,2)$ and $(1,1,1)$.  

\item If $\beta >\alpha>\gamma$, equation~\eqref{eq4} implies $\alpha\gamma\beta<2+3\beta$ or, equivalently, $\beta(\gamma\alpha-3)<2$. So, if $\gamma\alpha>3$ then $\beta$ must be 1, a contradiction. Hence $\gamma\alpha\leq 3$ and, because of $\alpha>\gamma$, $\gamma=1$ and $\alpha=2$ or 3. However, $\alpha=3$ and $\gamma=1$ on~\eqref{eq4} imply $\beta=3$, another contradiction. 
Then $\alpha=2$ so $\beta=5$. Therefore $(\alpha,\beta,\gamma)=(2,5,1)$ and this implies $(p,q,r)=(1,2,3)$.
\qedhere 
\end{enumerate}
\end{proof}

We continue with the proof of Lemma~\ref{lem:main-new} in Section~\ref{sec:abcneq0}. We state first the following trivial  lemma.
\begin{lemma}\label{lem:1}
	Let $d=\gcd(a,b)$. If $d\neq a$ then $d\leq \dfrac{a}{2}$.
\end{lemma}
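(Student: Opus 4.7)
The plan is to give a direct proof using only the definition of greatest common divisor. Assume $a,b$ are positive integers (as is implicit from the context, since this lemma is used in the enumeration of positive solutions of Diophantine equations arising from gcd identities in the preceding sections).

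First I would invoke the fundamental property of $d=\gcd(a,b)$: by definition $d$ divides $a$, so there exists a positive integer $k$ with $a = d k$. Since $d$ is also positive, $k \geq 1$, which gives $d = a/k$. The equality $d = a$ corresponds precisely to $k=1$, so the hypothesis $d \neq a$ forces $k \geq 2$. Consequently $d = a/k \leq a/2$, which is the conclusion.

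There is no real obstacle here; the statement is a one-line consequence of the fact that a proper divisor of a positive integer $a$ is at most $a/2$. The only small point to be careful about is that $a$ should be interpreted as a positive integer so that the inequality $d \leq a/2$ has the intended meaning. I would write the proof as a single short paragraph in the appendix, and then proceed to apply it repeatedly in the case analyses for Lemmas~\ref{lem:caseI}, \ref{lem:main-new}, and \ref{lem:caseIII}, where its role is to convert the dichotomy \emph{``either $\gcd(u,v) = u$ or else $\gcd(u,v)$ is small''} into the quantitative bound needed to make the enumeration of solutions finite.
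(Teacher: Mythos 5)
Your proof is correct: since $d$ divides $a$ we have $a=dk$ with $k\geq 1$, and $d\neq a$ forces $k\geq 2$, hence $d\leq a/2$. The paper states this lemma without proof (calling it trivial), and your one-line argument is exactly the intended justification, so there is nothing to compare beyond noting that you have simply written out what the authors left implicit.
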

%

\begin{proof}[Proof of Lemma~\ref{lem:main-new}]
It is straightforward to verify that each 3-tuple in our table is indeed a solution, thus we have the if part. We now prove the only if part.  First, by doing the change $p=-p$, we can rewrite equation~\eqref{eq:main} as
\begin{equation}\label{eq:mainrew}
q+r=\gcd(p,q)+\gcd(p,r)+\gcd(q,q+r-p)+\gcd(r,q+r-p).
\end{equation}
Let $(p,q,r)\in\N^3$ be a solution of~\eqref{eq:mainrew} such that $\gcd(p,q,r)=1$.  From now on, we denote $m=\gcd(p,q)$, $n=\gcd(p,r)$, $k=\gcd(q,q+r-p)$ and $l=\gcd(r,q+r-p)$, so we need to solve
\begin{equation}\label{eq:lem}
q+r=m+n+l+k.
\end{equation}
	
First, let's assume that $p=q$, so we obtain $m=\gcd(q,q)=q$, $n=\gcd(q,r)$, $l=\gcd(r,r)=r$ and $k=\gcd(q,r)$. Hence
\[
q+r=m+n+l+k=q+r+2\gcd(q,r)\geq q+r+2,
\]
a contradiction. Therefore $p\neq q$. In a similar way, we can conclude that $p\neq r$.

Interchanging the values of $q$ and $r$, we can assume without loss of generality, that $q\leq r$. We divide our analysis in three main cases: $p<q\leq r$, $q<p<r$ and $q\leq r<p$.
	
Assume that $p<q\leq r$. In this case we have $r-p>0$ and $q-p>0$. Therefore $k=\gcd(q,r-p)$ and $l=\gcd(r,q-p)$. Observe that $m\leq p$, $n\leq p$, $k\leq r-p$ and $l\leq q-p$, so, if one of these inequalities is strict, adding up all of them would give
\[
q+r=m+n+l+k<q+r,
\]
a contradiction. Thus, we have $m=p=\gcd(p,q)$, $n=p=\gcd(p,r)$, $k=r-p$ and $l=q-p$. In addition, this implies that $p|q$ and $p|r$, hence $p|\gcd(p,q,r)=1$, that is, $p=1$. Therefore, $p=m=n=1$, $k=r-1=\gcd(q,r-1)$ and $l=q-1=\gcd(r,q-1)$. This in turn, implies $r-1\leq q$ and $q-1\leq r$, hence $r-1\leq q\leq r+1$.  Moreover, since $q\leq r$, we cannot have $q=r+1$, so $q=r-1$ or $q=r$. 	If $q=r-1$, then $r=q+1$ and $(q-1)|(q+1)$, that is $(q-1)|(q-1+2)$. Therefore $(q-1)|2$, so $q=2$ or $q=3$, giving us the 3-tuples $(1,2,3)$ and $(1,3,4)$, respectively. On the other hand, if $q=r$, then $(q-1)|q$. Therefore $q=2$ and we obtain the solution $(1,2,2)$. 
	
Now, we assume that $q\leq r<p$ and let $p'=q+r-p$. Then $p'<q\leq r$. Since $(p',q,r)$ is also a solution of~\eqref{eq:mainrew}, then $(p',q,r)$ must be one of the solutions obtained in the previous paragraph, that is $(p',q,r)=(1,2,3)$, $(1,3,4)$ or $(1,2,2)$. Therefore, we obtain $(p,q,r)=(4,2,3)$, $(6,3,4)$ or $(3,2,2)$, respectively.
	
Finally, let's assume that $q<p<r$. In this case we have $r-p>0$ and $p-q>0$. Therefore, $k=\gcd(q,r-p)$ and $l=\gcd(r,p-q)$. Adding up the inequalities $m\leq q$, $n\leq p$, $l\leq p-q$ and $k\leq q$, we obtain
\[
q+r=m+n+l+k\leq 2p+q,
\]
impliying $r\leq 2p$. Let's suppose that $p\neq n=\gcd(p,r)$ and $p-q\neq l=\gcd(r,p-q)$. By Lemma~\ref{lem:1}, $n\leq \dfrac{p}{2}$ and $l\leq\dfrac{p-q}{2}$. We add these inequalities, together with $m\leq q$ and $k\leq r-p$, to obtain
\[
q+r=m+n+l+k\leq r+\dfrac{q}{2},
\]
a contradiction. Therefore, $n=p$ or $l=p-q$.
	
Suppose $n=p$. Then $p=n=\gcd(p,r)$, that is $p|r$. This means that there exists $\alpha\in\N$ such that $r=\alpha p$, so  $p<r=\alpha p\leq 2p$. Therefore $\alpha=2$ and $r=2p$. Since $1=\gcd(p,q,r)=\gcd(p,q,2p)$, we obtain $m=\gcd(p,q)=1$. Furthermore, $k=\gcd(q,r-p)=\gcd(q,p)=1$ and $l=\gcd(r,p-q)=\gcd(2p,p-q)$ so equation~\eqref{eq:lem} becomes
\begin{equation}\label{eq:46}
q+p=l+2.
\end{equation}	
The  equality $l=(2p,p-q)$ implies that $l|(2q)$, so $l|(2\gcd(p,q))=2$, that is, $l=1$ or $l=2$. If $l=1$, then $q+r=m+n+l+k=3+p<3+r$. Hence $q<3$, thus $q=1$ or $q=2$. We discard $q=2$ because, from~\eqref{eq:46}, $p=l=1<q$, a contradiction since we are assuming $q<p$. Thus $q=1$ and, again from~\eqref{eq:46}, $p=2$ and $r=4$, obtaining the solution $(p,q,r)=(2,1,4)$.
On the other hand, if $l=2$, equation~\eqref{eq:46} becomes $q+p=4$. This implies, since $q<p$, that $2q<q+p=4$, so $q=1$, $p=3$ and $r=6$, obtaining $(p,q,r)=(3,1,6)$.

Now suppose $l=p-q$. Since $p-q=l=\gcd(r,p-q)$, $(p-q)|r$ and there exists $\alpha\in\N$ such that $r=(p-q)\alpha$. Besides $1=\gcd(p,q,r)=\gcd(p,q,\alpha(p-q))$, so we have $m=\gcd(p,q)=1$. Replacing the former equalities in~\eqref{eq:lem} and reordering, we obtain
\begin{equation}\label{eq:47}
(\alpha-1)p+(2-\alpha)q=1+n+k.
\end{equation}
In addition, $k=\gcd(q,r-p)=\gcd(q,(\alpha-1)p)$, $n=\gcd(p,r)=\gcd(p,\alpha q)$ and, since $\gcd(p,q)=1$, $k|(\alpha-1)$ and $n|\alpha$.   

We now study the different values $\alpha$ can take. Note that if $\alpha=1$ then $r=p-q<p$, a contradiction, so $\alpha\geq 2$. If $\alpha=2$, then $k=1$, $n=1$ or $2$ and equation~\eqref{eq:47} becomes $p=2+n$. Assume $n=1$, so $p=3$. Since $q<p=3$, $q$ can be $1$ or $2$. If $q=2$ then $r=2$ which contradicts $q<p<r$. Thus, $q=1$ and $r=4$, obtaining $(p,q,r)=(3,1,4)$. On the other hand, if $n=2$ then $p=4$. Since $q<p=4$,  $q=1,2,3$ and, respectively, $r=6,4,2$. It follows that we obtain the solution $(p,q,r)=(4,1,6)$, since $q=2$ and $3$ respectively imply $\gcd(p,q,r)=\gcd(4,2,4)=2\neq 1$ and $r<q$, both contradictions.

Now assume that $\alpha=3$. This implies that $k|2$ and $n|3$, thus $k\leq 2$, $n\leq 3$ and $k+n\leq 5$. Therefore, in~\eqref{eq:47},
\[
1\leq q<p<2p-q=1+k+n\leq 6,
\]
that is, $p$ can take the values 2, 3, 4, 5. We study these cases separately.
\begin{enumerate}
\item If $p=2$ then $q=1$ and $r=\alpha(p-q)=3$, giving the solution $(p,q,r)=(2,1,3)$.
\item If $p=3$ then $n=\gcd(p,\alpha q)=\gcd(3,3q)=3=p$. This is an already studied case.
\item If $p=4$, since $4=p<2p-q=8-q\leq 6$, we obtain $2\leq q<4$. Thus, $q$ must be $3$, because $\gcd(p,q)=1$. From this $r=\alpha(p-q)=3$, which contradicts $p<r$.
\item If $p=5$, since $5<2p-q\leq 6$ then $10-q=2p-q=6$, that is $q=4$ and $r=\alpha(p-q)=3$, which contradicts $p<r$.
\end{enumerate}

Note that at this state of the proof, we already found at least half of the solutions in our table. The remaining solutions can be obtained by interchanging the values of $q$ and $r$. Rests to prove that assuming $\alpha\geq 4$, we cannot obtain any further solutions.

Let's assume $\alpha\geq 4$. Since we already studied the case $n=p$, we may also assume that $n\neq p$, which, by Lemma~\ref{lem:1}, implies $n\leq\dfrac{p}{2}$. 
From equation~\eqref{eq:47}, we obtain the inequality
\[
3p-2q\leq (\alpha-4)(p-q)+3p-2q=1+n+k\leq 1+\dfrac{p}{2}+q,
\]
that is 
\begin{equation}\label{eq:48}
\dfrac{5}{2}p\leq 1+3q. 
\end{equation}
Rearranging again~\eqref{eq:47}, we obtain
\[
\alpha(p-q)-p+2q=1+n+k\leq 1+n+q.
\]
Thus
\[
2q<r-p+2q=\alpha(p-q)-p+2q=1+n+k\leq 1+n+q\leq 1+\dfrac{p}{2}+q,
\]
which implies $q<1+\dfrac{p}{2}$. This in turn implies
\[
5q<5+\dfrac{5}{2}p\leq 5+(1+3q)=6+3q,
\]
that is, $q<3$ or, equivalently, $q\leq 2$. Using this inequality in~\eqref{eq:48}, we obtain
$\dfrac{5}{2}p\leq 7$ or $p\leq \dfrac{14}{5}<3$. Since $1\leq q<p<3$, we conclude that $p=2$, $q=1$ and $n=1$.
Replacing in~\eqref{eq:47},
\[
2(\alpha-1)+(2-\alpha)=1+1+k\leq 2+1,
\]
that is $\alpha\leq 3$, a contradiction.
\end{proof}

Finally, we prove Lemma~\ref{lem:caseIII} in section~\ref{sec:LV:an0c=0}.
\begin{proof}[Proof of Lemma~\ref{lem:caseIII}]
We write equation~\eqref{eq:LV:caseIII} as
\[
p=m+n+l,
\]
where $m=\gcd(p,q)$,  $n=\gcd(p,r)$ and $l=\gcd(p,q+r)$. Thus, there exist  $\gamma, \alpha, \beta \in \N$ such that
\[
p=n \gamma,\quad 
p=m \beta,\quad  
p=l \alpha 
\]
Replacing these equalities in our equation, we obtain
\begin{equation}\label{eq:albetgam}
\alpha\beta\gamma=\alpha\gamma+\alpha\beta+\beta\gamma.
\end{equation}
We first take care of this equation. 

Let's assume that $\alpha=\beta$. With this, equation~\eqref{eq:albetgam} reduces to $(\alpha-2)\gamma=\alpha$, that is $(\alpha-2)|\alpha$, so $\alpha=3$ or $4$. If $\alpha=\beta=3$ then $\gamma=3$, so we obtain the solution $(\alpha,\beta,\gamma)=(3,3,3)$. On the other hand, if $\alpha=\beta=4$ then $\gamma=2$ so we obtain $(\alpha,\beta,\gamma)=(4,4,2)$. Note that, because of the symmetry of $\alpha,\beta,\gamma$ in \eqref{eq:albetgam}, we obtain the solutions $(\alpha,\beta,\gamma)=(2,4,4)$ $(4,2,4)$ if we assume $\beta=\gamma$ or $\alpha=\gamma$, respectively. 

We now assume that $\alpha$, $\beta$ and $\gamma$ are different. First, let's assume that $\alpha<\beta<\gamma$. This in~\eqref{eq:albetgam} implies $\alpha\beta\gamma<3\beta\gamma$, so $\alpha<3$. But if $\alpha=1$ then $\beta\gamma=\gamma+\beta+\beta\gamma$, a contradiction. Thus $\alpha=2$ and~\eqref{eq:albetgam} reduces to
\[
(\beta-2)\gamma=2\beta.
\]
This in turn implies that $(\beta-2)|2\beta$, hence $(\beta-2)|4$. Therefore $\beta$ can take the values $\beta=6,4,3$ implying $\gamma=3,4,6$, respectively. But we discard the case $\beta=\gamma=4$ and $\beta=6$, $\gamma=3$, since we are assuming that $\alpha<\beta<\gamma$ are different. Again, by the symmetry of equation~\eqref{eq:albetgam}, we obtain $(\alpha,\beta,\gamma)=(2,3,6),(2,6,3),(3,2,6),(6,2,3),(3,6,2),(6,3,2)$.

We now will use these solutions to obtain solutions of~\eqref{eq:LV:caseIII}. Note that, although equation~\eqref{eq:albetgam} is symmetric with respect to $\alpha$, $\beta$ and $\gamma$, we cannot freely interchange the values of $p$, $q$ and $r$ in equation~\eqref{eq:LV:caseIII}. However, that $q$ and $r$ are interchangeable, which means that we can actually interchange $\beta$ and $\gamma$. 

Now we proceed to obtain solutions of~\eqref{eq:LV:caseIII}.
\begin{enumerate}
 \item Let $(\alpha,\beta,\gamma)=(3,3,3)$. In this case $p=3m=3n=3l$, so $m=n=l$. Since $\gcd(p,q,r)=1$, we obtain $m=n=l=1$ and $p=3$. Remains to obtain values for $q$ and $r$. Observe that there exist integers $u,v$ and $t,s$ such that 
$q=3u+t$ and $r=3v+s$, with $0\leq s,t<3$. The remainders $s$ and $t$ cannot be zero, since $m=\gcd(p,q)=n=\gcd(p,r)=1$. Moreover, since $l=(p,q+r)=1$, $t+s$ cannot be a multiple of 3. Therefore, we obtain the families of solutions
\begin{gather*}
(p,q,r)=(3,3u+1,3v+1),\, u,v\geq 0,\\
(p,q,r)=(3,3u+2,3v+2),\, u,v\geq 0.
\end{gather*}
\item Let $(\alpha,\beta,\gamma)=(2,4,4)$.  In this case $p=4m=4n=2l$, so $m=n$ and $l=2m=2n$. Since $\gcd(p,q,r)=1$, we obtain $m=n=1$, $l=2$ and $p=4$. Similarly to the previous case, observe that there exist integers $u,v$ and $t,s$ such that 
$q=4u+t$ and $r=4v+s$, with $0\leq s,t<4$. Again, the remainders $s$ and $t$ cannot be zero, since $m=\gcd(p,q)=n=\gcd(p,r)=1$. Since $l=(p,q+r)=2$, $t+s$ must be even, but cannot be a multiple of 4. Therefore, we obtain the families of solutions
\begin{gather*}
(p,q,r)=(4,4u+1,4v+1),\, u,v\geq 0,\\
(p,q,r)=(4,4u+3,4v+3),\, u,v\geq 0.
\end{gather*}
\item Let $(\alpha,\beta,\gamma)=(4,2,4)$.  In this case $p=2m=4n=4l$, so $n=l$ and $m=2n=2l$. Since $\gcd(p,q,r)=1$, we obtain $n=l=1$, $m=2$ and $p=4$. Again, there exist $u,v$ and $t,s$ such that $q=4u+t$ and $r=4v+s$, with $0\leq s,t<4$. Since $m=\gcd(p,q)=2$, the only possibility for $t$ is to be $t=2$, in addition, using $l=(p,q+r)=1$, $t+s$ must be odd, thus $t=1$ or $3$. Therefore, we obtain the families of solutions
\begin{gather*}
(p,q,r)=(4,4u+2,4v+1),\, u,v\geq 0,\\
(p,q,r)=(4,4u+2,4v+3),\, u,v\geq 0.
\end{gather*}

\item Let $(\alpha,\beta,\gamma)=(2,3,6)$. In this case $p=3m=6n=2l$, so $m=2n$ and $l=3n$. 
Since $\gcd(p,q,r)=1$, $n=1$ so $m=2$, $l=3$ and $p=6$. Writing $q=6u+t$, $r=6v+s$, with $0\leq t,s<6$, and using $m=(p,q)=(6,t)=2$, $n=(p,r)=(6,s)=1$, we obtain $t=2,4$ and $s=1,5$. But this means $t+s=3,5,7,9$ together with $l=(p,q+r)=(6,t+s)=3$, imply
\begin{gather*}
(p,q,r)=(6,6u+2,6v+1),\, u,v\geq 0,\\
(p,q,r)=(6,6u+4,6v+5),\, u,v\geq 0.
\end{gather*}

\item Let $(\alpha,\beta,\gamma)=(3,2,6)$. In this case $p=2m=6n=3l$, so $l=2n$ and $m=3n$. Since $\gcd(p,q,r)=1$, $n=1$ so $l=2$, $m=3$ and $p=6$. Writing $q=6u+t$, $r=6v+s$, with $0\leq t,s<6$, and using $m=(p,q)=(6,t)=3$, $n=(p,r)=(6,s)=1$, we obtain $t=3$ and $s=1,5$. Therefore $t+s=4,8$ which, along with $l=(p,q+r)=(6,t+s)=2$ implies
\begin{gather*}
(p,q,r)=(6,6u+3,6v+1),\, u,v\geq 0,\\
(p,q,r)=(6,6u+3,6v+5),\, u,v\geq 0.
\end{gather*}

\item Let $(\alpha,\beta,\gamma)=(6,2,3)$. In this case $p=2m=3n=6l$, so $m=3l$ and $n=2l$. Since $\gcd(p,q,r)=1$, $l=1$ so $m=3$, $n=2$ and $p=6$. Writing $q=6u+t$, $r=6v+s$, with $0\leq t,s<6$, and using $m=(p,q)=(6,t)=3$, $n=(p,r)=(6,s)=2$, we obtain $t=3$ and $s=2,4$. But this means $t+s=5,7$ which, along with $l=(p,q+r)=(6,t+s)=1$ implies
\begin{gather*}
(p,q,r)=(6,6u+3,6v+2),\, u,v\geq 0,\\
(p,q,r)=(6,6u+3,6v+4),\, u,v\geq 0.
\end{gather*}

\end{enumerate}
The remaining solutions $(p,q,r)$, obtained from $(\alpha,\beta,\gamma)=(4,4,2)$, $(2,6,3)$, $(3,6,2)$, $(6,3,2)$, can be deduced directly from items 3--6 above, by interchanging $q$ and $r$.
\end{proof}

\end{appendices}

\def\cprime{$'$}

%

\end{document}